\documentclass[11pt]{amsart}
\usepackage{amssymb,amscd,  latexsym, graphicx, mathrsfs, enumerate}
\usepackage{color}

\newcommand{\nc}{\newcommand}
\numberwithin{equation}{section}
\newtheorem{theorem}{Theorem}[section]
\newtheorem{prop}[theorem]{Proposition}
\newtheorem{importnota}[theorem]{Important Notation}
\newtheorem{prblm}[theorem]{Problem}
\newtheorem{notation}[theorem]{Notation}
\newtheorem{caution}[theorem]{Caution}
\newtheorem{remark}[theorem]{Remark}
\newtheorem{lemma}[theorem]{Lemma}
\newtheorem{construction}[theorem]{Construction}
\newtheorem{corollary}[theorem]{Corollary}
\newtheorem{example}[theorem]{Example}
\newtheorem{conclusion}[theorem]{Conclusion}
\newtheorem{triviality}[theorem]{Triviality}
\newtheorem{proto}[theorem]{Prototype Quasifibration}
\newtheorem{cauex}[theorem]{Cautionary Example}
\newtheorem{propositiondef}[theorem]{Proposition-Definition}
\newtheorem{subth}{Nuisance}[theorem]
\newtheorem{ssubth}{ }[subth]
\newtheorem{conjecture}[theorem]{Conjecture}
\newtheorem{sidest}[theorem]{Side Story}
\newtheorem{miniexample}[theorem]{Example}
\theoremstyle{definition}
\newtheorem{defin}[theorem]{Definition}

\nc\tri[1]{\begin{triviality}}
\nc\side[1]{\begin{sidest}}
\nc\conj[1]{\begin{conjecture}}
\nc\prodef[1]{\begin{propositiondef}}
\nc\prt[1]{\begin{proto}}
\nc\lem[1]{\begin{lemma}}
\nc\sblm[1]{\begin{sublemma}}
\nc\pro[1]{\begin{prop}}
\nc\thm[1]{\begin{theorem}}
\nc\cor[1]{\begin{corollary}}
\nc\dfn[1]{\begin{defin}}
\nc\sthm[1]{\begin{subth}}
\nc\exm[1]{\begin{example}}
\nc\miniexm[1]{\begin{miniexample}}
\nc\plm[1]{\begin{prblm}}
\nc\rmk[1]{\begin{remark}}
\nc\subrmk[1]{\begin{subremark}}
\nc\ntn[1]{\begin{notation}}
\nc\cau[1]{\begin{caution}}
\nc\imn[1]{\begin{importnota}}
\nc\cax[1]{\begin{cauex}}
\nc\con[1]{\begin{construction}}
\nc\ssthm[1]{\begin{ssubth}}
\nc\cnc[1]{\begin{conclusion}}
\nc\elem{\end{lemma}}
\nc\esblm{\end{sublemma}}
\nc\eside{\end{sidest}}
\nc\econj{\end{conjecture}}
\nc\eprodef{\end{propositiondef}}
\nc\eprt{\end{proto}}
\nc\ethm{\end{theorem}}
\nc\ecor{\end{corollary}}
\nc\edfn{\end{defin}}
\nc\esthm{\end{subth}}
\nc\epro{\end{prop}}
\nc\etri{\end{triviality}}
\nc\eexm{\end{example}}
\nc\eminiexm{\end{miniexample}}
\nc\ermk{\end{remark}}
\nc\subermk{\end{subremark}}
\nc\eplm{\end{prblm}}
\nc\ecau{\end{caution}}
\nc\ecax{\end{cauex}}
\nc\eimn{\end{importnota}}
\nc\entn{\end{notation}}
\nc\econ{\end{construction}}
\nc\ecnc{\end{conclusion}}
\nc\essthm{\end{ssubth}}

\newcommand{\C}{\mathbb{C}}
\newcommand{\R}{\mathbb{R}}
\newcommand{\Q}{\mathbb{Q}}

\newcommand{\Z}{\mathbb{Z}}

\newcommand{\A}{\mathbb{A}}

\newcommand{\I}{\overline{I}}

\newcommand{\cS}{\mathcal{S}}

\newcommand{\ve}{\varepsilon}
\newcommand{\diag}{{\rm diag}}
\newcommand{\G}{\Gamma}

\newcommand{\ds}{\displaystyle}

\newcommand{\f}{\bold{f}}
\newcommand{\Ind}{{\rm Ind}}
\newcommand{\bw}{\textbf{w}}

\newcommand{\Sym}{{\rm Sym}}

\newcommand{\oH}{\overline{H}}

\newcommand{\lra}{\longrightarrow}

\newcommand{\ba}{\backslash}
\newcommand{\sgn}{{\rm sgn}}
\newcommand{\GL}{{\rm GL}}

\newcommand{\SL}{{\rm SL}}
\newcommand{\SO}{{\rm SO}}
\newcommand{\OO}{{\rm O}}
\newcommand{\bs}{\backslash}
\newcommand{\disc}{{\rm disc}}

\renewcommand{\Bbb}{\mathbb}
\newcommand{\W}{\mathcal{W}}

\title[On explicit Fourier expansions of theta lifts to $\SO(3,n+1)$]
{On explicit Fourier expansions of theta lifts to $\SO(3,n+1)$ arising from 
elliptic newforms of level one}
\author{Henry H. Kim and Takuya Yamauchi}
\date{\today}
\thanks{The first author is partially supported by NSERC grant \#482564. 
}
\subjclass[2010]{Primary 11F55, Secondary 11F70, 22E55}
\address{Henry H. Kim \\
Department of mathematics \\
 University of Toronto \\
Toronto, Ontario M5S 2E4, CANADA \\
and Korea Institute for Advanced Study, Seoul, KOREA}
\email{henrykim@math.toronto.edu}

\address{Takuya Yamauchi \\
Mathematical Inst. Tohoku Univ.\\
 6-3,Aoba, Aramaki, Aoba-Ku, Sendai 980-8578, JAPAN}
\email{takuya.yamauchi.c3@tohoku.ac.jp}

\keywords{Theta lifts, degenerate Whittaker functions}

\begin{document}
\maketitle
\begin{abstract}
Using degenerate Whittaker functions and explicit computations of Eisenstein series, 
we obtain explicit formulas for the Fourier expansions of theta lifts to the special orthogonal group 
$G=\SO(3,n+1)$ over $\Q$, where $n\ge 3$ and $G$ splits at all finite places. 
The theta lifts in question are Hecke eigen, non-cuspidal, square-integrable automorphic forms of weight $l$ 
($l\ge n+2$, even), arising from elliptic newforms for $\SL_2(\Z)$ of weight 
$l-\frac{n-2}{2}$ when $n$ is even and $2l-n+1$ when $n$ is odd.
\end{abstract}

\tableofcontents



\section{Introduction}\label{intro}

Constructing automorphic forms on reductive groups is an important subject 
in both the theory of automorphic representations and number theory. 
There are several important works on automorphic forms by trace formulas \cite{Arthur}, 
theta lifts \cite{Rallis} (see \cite{MS} as a related work), the descent method \cite{GJS}, 
and converse theorems \cite{GRS}. 
While these approaches provide various ways to construct or characterize automorphic forms, 
a deeper understanding of their properties often relies on the analysis of their Fourier expansions. 
In particular, the theory of Fourier expansion plays a central role in studying automorphic forms.

In this paper, we study explicit formulas for the Fourier expansion of theta lifts 
from level-one holomorphic elliptic newforms to $\SO(3,n+1)$ 
in terms of degenerate Whittaker functions, 
based on several ideas from Ikeda-type constructions 
(see \cite{Ik01}, \cite{Ik08}, \cite{IY}, \cite{KY}, \cite{KY1}).  
We remark that Miyazaki and Saito have studied an explicit formula of the theta lifts in 
more general setting using the Borcherds method (see \cite[Proposition 5.4]{MS}). 
Our method relies on the local and global representation theories and thus, 
they have different natures.   

Let $\A=\A_\Q$ be the ring of adeles of $\Q$. 
For an integer $n\ge 2$, let $A$ be a positive definite integral matrix of size $n-2$. 
Assume $A$ is even (namely, any diagonal entries are even integers)  if $n$ is even while $A=\diag(1,A')$ if $n$ is odd where $A'$ is 
a positive definite integral matrix of size $n-3$.  
Let $J_{2,n}={\rm antidiag}(1,1,-A,1,1)$ and 
$J_{3,n+1}={\rm antidiag}(1,1,1,-A,1,1,1)$ which are of size $n+2$ and $n+4$ 
respectively (see Section \ref{pre}). 
Let $G=\SO(3,n+1)$ be the special orthogonal group over $\Q$ associated to 
the symmetric pairing defined by $J_{3,n+1}$. Note that if $n$ is even, $G$ does not have discrete series.
Assume $G$ splits everywhere at finite places.  
By \cite[Section 2.1]{Serre}, it is equivalent to 
$\det(A)=1$ and $n\equiv 2$ mod 8 when $n$ is even, 
$\det(A')=1$ and $n\equiv 3$ mod 8 when $n$ is odd. 
For instance, this condition is satisfied when $n=8a+2$,  
$A=\diag(\overbrace{A_8,\ldots,A_8}^{a})$ or $n=8a+3$, 
$A=\diag(1,\overbrace{A_8,\ldots,A_8}^{a})$
where $A_8$ is the $E_8$ Cartan matrix given as an element of even integral matrix in $M_8(\Z)$. This agrees with the result of \cite{G} since $\frac {n+4}2\equiv 3$ (mod 4) if $n$ is even, and $\frac {n+3}2\equiv 3$ (mod 4) if $n$ is odd. 

Let $P$ be the Siegel parabolic subgroup of $G$ with the Levi decomposition 
$P=MN$ where $M=\{\diag(t,m,t^{-1})\ |\ t\in \GL_1,\ m\in \SO(2,n)\}\simeq \GL_1\times \SO(2,n)$ and $N\simeq \mathbb{G}^{n+2}_a$ where $\SO(2,n)$ is defined by $J_{2,n}$. Let $\nu:P\lra \GL_1$ be the similitude character which is given as 
the natural extension of the character $M\lra \GL_1,\ \diag(t,m,t^{-1})\mapsto t$ and  
let us extend it naturally on $P$. 
Let $V'$ be the quadratic space associated to $J_{2,n}$ with the quadratic map 
$q:V'\lra \mathbb{G}_a$ and we identify $V'$ with $N$. 
We also denote by $(\ast,\ast)_{V'}$ the corresponding symmetric bilinear pairing. 
Let $V''$ be the quadratic space associated to $J_{1,n-1}=
{\rm antidiag}(1,-A,1)$.

Let $l>n+1$ be an even integer. 
Let $f(z)=\ds\sum_{m=1}^\infty a_f(m) m^{\frac{k-1}{2}}q^m$ be a normalized Hecke eigen cusp form of weight $k=\begin{cases}
l-\frac{n-2}{2} & \text{if $n$ is even},\\
2l-n+1 & \text{if $n$ is odd}
\end{cases}$, with respect to $\SL_2(\Z)$. 
For each prime $p$, let $a_f(p)=\alpha_p+\alpha^{-1}_p$ with $\alpha_p\in U(1)=\{z\in \C\ 
|\ |z|=1\}$. Let 
$\pi=\pi_f=\otimes' \pi_p\otimes \pi_\infty $ 
be the unitary cuspidal representation of $\GL_2(\A)$ associated to $f$. For each prime $p$, we have $\pi_p\simeq \pi_p(\mu_p,\mu^{-1}_p)$ 
so that $\mu_p(p)=\alpha_p$. 
Let $\psi=\otimes'_p \psi_p:\Q\bs \A\lra \C^\times$ be the standard additive character. 
When $n$ is even, let $\tau$ be a unique $\psi$-generic unitary 
cuspidal component of $\pi|_{\SL_2(\A)}$. When $n$ is odd, let $\tau$ be 
the Shimura-Waldspurger lift of $\pi$ to $\widetilde{\SL}_2(\A)$.  
Let $\kappa=\left\{\begin{array}{cl}
2 & \text{if $n$ is even,} \\
1 & \text{if $n$ is odd}
\end{array}\right.$. 
Let $\Pi_f=\otimes'_p \Pi_{f,p}\otimes \Pi_{f,\infty}$ be a unique component 
of the restriction to $G$ of the theta lift of $\tau$ to ${\rm O}(3,n+1)$ such that 
$\Pi_{f,\infty}$ contains $\mathcal{H}^{2l}(\C^3)\boxtimes \mathbf{1}$ as a minimal 
$K_\infty$-type (see Proposition \ref{thetaprop} for the notation).  A standard method shows that $\Pi_f$ is a non-zero square integrable, cohomological, non-cuspidal automorphic representation of $G(\A)$. Hence it contributes to the residual spectrum $L^2_{{\rm res}}(G(\Q)\bs G(\A))$.
Further, by \cite{AG} for 
non-archimedean places and \cite{Kobayashi}, \cite[Proposition 2.1]{MS} for the archimedean place, we see that 
$\Pi_{f,p}$ is a unique spherical summand of the degenerate principal series 
${\rm Ind}^{G(\Q_p)}_{P(\Q_p)}\mu^\kappa_p\circ \nu$ 
and $\Pi_{f,\infty}$ is a unique irreducible component of $\Pi^{3,n+1}_{l,0}|_{G(\R)}$ containing $\mathcal{H}^{2l}(\C^3)\boxtimes 
\mathbf{1}$ as a minimal $K_\infty$-type. Here $\Pi^{3,n+1}_{l,0}$ is the unitarizable, irreducible non-tempered, cohomological admissible representation of $\OO(V)(\R)$ 
given in \cite[Proposition 2.1]{MS}. 
By using the degenerate Whittaker functions (cf. \cite{IY},  \cite{KY1}),  for each prime $p<\infty$ and $\eta\in V'(\Q_p)$ 
with $q(\eta)\neq 0$, we can define 
a non-zero functional 
$$w^{\mu_p^\kappa}_\eta\in {\rm Wh}(\Pi_{f,p}):={\rm Hom}_{N(\Q_p)}(\Pi_{f,p},\psi_p((\eta,\ast)_{V'}).$$
Let us fix an intertwining map   
$$\Pi_f\hookrightarrow L^2(G(\Q)\bs G(\A)),\quad \phi\mapsto F_\phi.
$$ 
By multiplicity one for ${\rm Wh}(\Pi_{f,p})$ for each prime $p$ 
and by \cite[p.621, Theorem 3.2.4]{Po} for the archimedean place, we can expand $F_\phi=(F_\phi)_0+(F_\phi)_+$ as follows for 
each distinguished vector $\phi=\otimes'_p \phi_p\otimes \phi^{{\rm Po}}_\infty=\phi_\f\otimes \phi^{{\rm Po}}_\infty$ with the Pollack's section $\phi^{{\rm Po}}_\infty\in 
\Pi_{f,\infty}\subset \Pi^{3,n+1}_{l,0}$ (see Definition \ref{PollackSection} for $\phi^{{\rm Po}}_\infty$). The initial term $(F_\phi)_0$ is the constant term of $F_\phi$ along $N$ and the second term is given by   
$$(F_\phi)_+(g)=\sum_{\eta\in V'(\Q)\atop q(\eta)>0}c_\eta(\phi)w^{\mu^\kappa_\f}_\eta(g_\f\cdot \phi_\f)\mathcal{W}_{2\pi \eta}(g_\infty),\ 
c_\eta(\phi)\in \C,\ g=g_\f g_\infty\in G(\A)$$
where  $w^{\mu^\kappa_\f}_\eta(g_\f\cdot \phi_\f):=\ds\prod_{p<\infty}w^{\mu^\kappa_p}_\eta(g_p\cdot \phi_p)$ for 
$g_\f=(g_p)_p$ and $\mathcal W_{2\pi \eta}$ is the Pollack's spherical function 
associated to the additive character $e^{2\pi \sqrt{-1}(\eta,\ast)_{V'}}$ on $V'(\R)$ 
(see \cite[Section 3]{Po} but we use a slightly different normalization). We will explain later that 
any index $0\neq \eta\in V'(\Q)$ with $q(\eta)=0$ does not appear in the expansion of the second term $(F_\phi)_+$. For each unramified vector $\phi_p$, the value 
 $w^{\mu_p}_\eta(\phi_p)$ is given explicitly by using the Laurent polynomials 
 arising from the Siegel series for the Eisenstein series. 
 
Our main purpose of this paper is to determine the Fourier coefficients $c_\eta(\phi_\f)$ 
using Fourier-Jacobi expansions for the distinguished unramified vector 
$\phi_\f=\otimes'_{p<\infty}\phi_p$.

In \cite[Section 4]{Po}, Pollack defined the Eisenstein series $E_l$ on $G(\A)$ of weight $l$ with respect to 
$G(\widehat{\Z})$,  
 and computed unramified and archimedean part of Fourier coefficients. 
He computed the Siegel series for the unramified case and the remaining case is completed in 
our previous work \cite{KY2}. 
Then, we have the Fourier expansion of the Eisenstein series as follows:
$$E_l(g)=E_0(g)+\sum_{0\neq \eta\in V'(\Q)\atop q(\eta)\ge 0}a_{E_l}(\eta)\mathcal W_{2\pi \eta}(g_\infty),\ 
g=\gamma k_f g_\infty\in G(\A)=G(\Q)(G(\widehat{\Z})\times G(\R)),$$
where $E_0(g)$ is the constant term. We remark that $a_{E_l}(\eta)=0$ unless 
$\eta\in V'(\Z)$.  

When $n$ is even ($n>2$),
for each $\eta\in V'(\Q)$ with $q(\eta)>0$ and each rational prime $p$, there exists a Laurent polynomial $\widetilde{Q}_{\eta,\Phi_p}(X_p)=\widetilde{Q}_{\eta,p}(X_p)\in \Z[X_p,X^{-1}_p]$ 
depending on $\eta$ and the unramified Schwartz function $\Phi_p={\rm char}_{V'(\Z_p)} \in \mathcal{S}(V'(\Q_p))$ satisfying 
$\widetilde{Q}_{\eta,p}(X_p)=\widetilde{Q}_{\eta,p}(X^{-1}_p)$ (see \cite[Theorem 4.2]{KY2}) such that 
$$a_{E_l}(\eta)=C_{l,n}q(\eta)^{\frac{l-\frac{n}{2}}{2}}\ds\prod_{p}
\widetilde{Q}_{\eta,p}(p^{\frac{l-\frac{n}{2}}{2}})$$
where the constant $C_{l,n}$ is given explicitly in \cite[Theorem 4.8]{KY2}. 
We remark that $\widetilde{Q}_{\eta,\Phi_p}(X_p)$ is also defined for any $\eta\in V'(\Q_p)$ 
with $q(\eta)\neq 0$. For any $g_p=\diag(t_p,m_p,t^{-1}_p)n_p k_p \in G(\Q_p)=M(\Q_p)N(\Q_p)G(\Z_p)$, 
it follows from Proposition \ref{propertiesIW}(5) and \cite[Section 4]{KY2} that  
$\widetilde{Q}_{\eta,g_p\cdot\Phi_p}(X_p)=\psi((t_p\cdot \eta\cdot {}^t m^{-1}_p,n_p)_{V'}) \widetilde{Q}_{g^{-1}_p\eta,\Phi_p}(X_p)$. 
Thus, $\widetilde{Q}_{\eta,g_p\cdot\Phi_p}(X_p)$ is also a Laurent polynomial (over $\Q$) satisfying 
$\widetilde{Q}_{\eta,g_p\cdot\Phi_p}(X_p)=\widetilde{Q}_{\eta,g_p\cdot\Phi_p}(X^{-1}_p)$.  

Now we are ready to explain our main results. 
\begin{theorem}[$n$ even ($n>2$)]\label{ITC}  
Let $A_f(\eta)(g_\f)=q(\eta)^{\frac{k-1}{2}}\ds\prod_p \widetilde 
Q_{\eta,g_p\cdot\Phi_p}(\alpha_p)$ for $g_\f=(g_p)_p\in G(\A_\f)$. Then, for $\phi\in \Pi_f$ with 
$\phi_\f$ the unramified distinguished vector and $\phi_\infty=\phi^{{\rm Po}}_\infty$, there exists an 
absolute non-zero constant $C$ such that 
$$F_\phi(g)=(F_\phi)_0+C\sum_{0\neq \eta\in V'(\Bbb Q)\atop q(\eta)> 0} A_f(\eta)(g_\f)
 \mathcal W_{2\pi \eta}(g_\infty),\ g=g_\f g_\infty \in G(\A)
$$
and it is a non-zero Hecke eigen, square integrable automorphic form of weight $l$ with respect to $G(\widehat{\Z})$. 
\end{theorem}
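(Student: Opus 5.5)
We will establish the formula in three stages: a local computation at each finite place, a reduction of the global coefficients $c_\eta(\phi)$ to a single function of the ``content'' of $\eta$, and a determination of that function by a Fourier--Jacobi (or seesaw) comparison with the classical newform $f$.

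\emph{Local stage.} At each prime $p$ we relate the degenerate Whittaker functional $w^{\mu_p^\kappa}_\eta$ on the spherical constituent $\Pi_{f,p}\subset \Ind^{G(\Q_p)}_{P(\Q_p)}\mu_p^\kappa\circ\nu$ to the Siegel series of the degenerate principal series that carries the Eisenstein series $E_l$. The Jacquet-type integral defining $w_\eta$ on the induced representation is holomorphic in the character $\chi$ of $\Q_p^\times$. For $\chi=|\cdot|^{s}$ its value on $g_p\cdot\Phi_p$ is, up to an explicit normalising factor, the Siegel series. By \cite[Theorem 4.2]{KY2} and Proposition \ref{propertiesIW}(5), this equals $\widetilde Q_{\eta,g_p\cdot\Phi_p}(X_p)$ with $X_p=p^{-s}$ (suitably shifted). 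Since the integral depends only on the unramified character through its value at $p$, specialising $X_p\mapsto \mu_p^\kappa(p)^{1/2}$-type normalisation gives
$$w^{\mu_p^\kappa}_\eta(g_p\cdot\phi_p)=\widetilde Q_{\eta,g_p\cdot\Phi_p}(\alpha_p)$$
for the spherical vector, after normalising $w$. For $n$ even we have $\kappa=2$, and the square is absorbed by the half-exponent in $p^{\frac{l-n/2}{2}}$. The functional equation $\widetilde Q(X)=\widetilde Q(X^{-1})$ shows that the result does not depend on the choice of $\alpha_p$ versus $\alpha_p^{-1}$, so it is genuinely a function on $\Pi_{f,p}$. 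For almost all $p$ we have $\widetilde Q=1$, so the product over $p$ is finite.

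\emph{Global stage.} By \cite[Theorem 3.2.4]{Po} and multiplicity one of ${\rm Wh}(\Pi_{f,p})$, the expansion of $(F_\phi)_+$ holds with some constants $c_\eta(\phi)$. Left invariance of $F_\phi$ under $M(\Q)$ gives $c_{\gamma\eta}=c_\eta$ for $\gamma\in M(\Q)\cap G(\widehat\Z)$, up to the character twist. The remaining task is to show
$$c_\eta=C\,q(\eta)^{\frac{k-1}{2}}.$$
For this we use the Fourier--Jacobi expansion along the Heisenberg parabolic: restricting to a Jacobi group $\SL_2\ltimes$ Heisenberg, the first Fourier--Jacobi coefficient of $F_\phi$ is expressed through the theta integral. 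Concretely, $F_\phi$ is the theta lift $\int_{\SL_2(\Q)\bs\SL_2(\A)}\varphi_\tau(h)\theta(h,g)\,dh$, and computing its $\eta$-th coefficient by unfolding yields the Whittaker coefficient of $\tau$ at $q(\eta)$ against a local integral. The $\tau$-coefficient is $a_f(q(\eta))q(\eta)^{\frac{k-1}{2}}$. Comparing, at primitive $\eta$, with the local formula from the first stage (for which $\prod_p\widetilde Q$ reduces via Hecke relations to the divisor sum expressing $a_f$) fixes $c_\eta$ up to one absolute constant $C$. Nonvanishing of $C$ follows from $\Pi_f\neq0$ and from the fact that $F_\phi$ has a nonzero coefficient, using that $a_f(1)=1$ together with the nonvanishing of Pollack's $\mathcal W$.

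\emph{Remaining assertions.} Vanishing of the terms with $q(\eta)=0$ and $q(\eta)<0$ comes from the minimal $K_\infty$-type and the support of Pollack's functions: the nonzero isotropic characters are not in the wave-front of a rank-type representation of this kind. Hecke eigenness holds because $\phi_\f$ is spherical in an irreducible representation; weight $l$ and square integrability are inherited from $\Pi_f$.

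The main obstacle is the global identification $c_\eta=C\,q(\eta)^{\frac{k-1}{2}}$. This requires matching the local unfolding integral at each $p$, for non-primitive $\eta$, with $\widetilde Q_{\eta,p}(\alpha_p)$ — effectively a Hecke-compatibility identity between the Siegel-series polynomials and the $p$-power coefficients of $f$. We would attempt it first by induction on the content of $\eta$, using the recursion for $\widetilde Q$ from \cite{KY2}.
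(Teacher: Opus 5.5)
Your local stage is fine; it is \eqref{recoverQ} together with Proposition \ref{propertiesIW}(5). The global stage, however, has a gap exactly at the step you call the main obstacle, and the cause is a missing idea. You only use invariance under $M(\Q)\cap G(\widehat{\Z})$, which never changes the content of $\eta$. That is why non-primitive $\eta$ seem to need a Hecke-type recursion for $\widetilde Q$, and you do not prove that recursion. The paper instead uses left invariance under all of $M(\Q)$. Combined with the transformation law in Proposition \ref{propertiesIW}(5), this gives \eqref{formulaC}: $c_{t\cdot\eta\cdot{}^tm^{-1}}(\phi)=\mu_\f(t)^{-\kappa}c_\eta(\phi)$ for every $\diag(t,m,t^{-1})\in M(\Q)$, including the scalings by $t\in\Q^\times$. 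Together with Witt's theorem for $\SO(V')(\Q)$, this moves every $\eta$ with $q(\eta)>0$ to some $(a,\mathbf 0,-1)$ with $a<0$, and gives $c_{(t^2a,\mathbf 0,-1)}(\phi)=\mu_\f(t)^{-2}c_{(a,\mathbf 0,-1)}(\phi)$. All dependence on the content of $\eta$ and on $g_\f$ is then carried by the local values $w^{\mu^2_p}_\eta(g_p\cdot\phi_p)$, i.e.\ by $\widetilde Q_{\eta,g_p\cdot\Phi_p}(\alpha_p)$, so no induction on content is needed. What remains is one unknown constant per square class of $q(\eta)$, and these must all be matched with $f$ simultaneously.

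That matching is the real content of the theorem, and your one-sentence unfolding does not supply it. To use $\int\varphi_\tau(h)\theta(h,g)\,dh$ you would need three things, none of which you carry out:
\begin{enumerate}
\item Show that the prescribed vector $\phi_\f\otimes\phi^{{\rm Po}}_\infty$ is the theta integral of explicit Schwartz data. This is non-trivial at $\infty$, where the data must produce the $K$-type $\mathcal H^{2l}(\C^3)\boxtimes\mathbf 1$.
\item Identify the archimedean unfolding integral with an $\eta$-independent multiple of $\W_{2\pi\eta}$.
\item Identify the $p$-adic unfolding integrals with $\widetilde Q_{\eta,p}(\alpha_p)$.
\end{enumerate}
Done in full, this is the Borcherds-type route of \cite{MS}. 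On that route the rank-one coefficients vanish because they pick up the constant term of the cusp form $\varphi_\tau$; your wave-front remark is not an argument for this.

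The paper never writes down a theta integral. It takes the Fourier--Jacobi coefficient $F_\phi(\ast)_{-1,\Phi}$ of index $S=-1$, with Gaussian $\Phi_\infty$ and $\Phi_p$ chosen so that $\beta^{\psi}_{-1}(\ast;\phi_p\otimes\Phi_p)=\widetilde\phi_p$. Lemmas \ref{localFJnonarchi}, \ref{explocalFJ}, \ref{localFJarchi} and \ref{FJglobalDWF} show that this is a cusp form on $\SL_2(\A)$ whose $a$-th Whittaker coefficient is $c_{(a,\mathbf 0,-1)}(\phi)\,w^{\mu^2_\f}_a(\cdots)$. It generates the same $\psi$-generic representation as $\overline{\widetilde\phi}$, so multiplicity one gives $c_{(a,\mathbf 0,-1)}(\phi)=C(-a)^{k/2}c_{-a}(f)$ for all $a<0$ at once. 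Then \eqref{formulaC} yields every $c_\eta(\phi)$.
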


\begin{remark}\label{standLeven} The standard $L$-function of $\Pi_{f}$ is given by
$$L(s,\Pi_{f})=L(s,{\rm Sym}^2\pi_f)\prod_{i=-\frac n2}^\frac n2 \zeta(s+i).
$$
\end{remark}

When $n$ is odd, we need a slight modification. Let $l>n+1$ be an even integer. 
Let $f$ be a normalized Hecke eigen cusp form of weight $k:=2l-n+1$ with respect to $\SL_2(\Bbb Z)$.
Let $f(z)=\ds\sum_{m=1}^\infty a_f(m)m^{\frac{k-1}{2}} q^m$. Let $a_f(p)=\alpha_p+\alpha_p^{-1}$. 
Let $h(z)=\ds\sum_{m>0\atop \text{$(-1)^{\frac {k}2} m\equiv 0,1$ (mod 4)}} c(m)q^m\in S_{l-\frac {n}2+1}(\Gamma_0(4))$ be the one corresponding to $f$ by the Shimura correspondence. Write $q(\eta)=\frak d_{\eta}\frak f_\eta^2$ such that $\frak d_\eta$ is the absolute discriminant of $\Bbb Q\left(\sqrt{\epsilon q(\eta)}\right)/\Bbb Q$, where $\epsilon=\begin{cases} 1, &\text{if $q(\eta)_1\equiv 1$ (mod 4)}\\ -1, &\text{if $q(\eta)_1\equiv 3$ (mod 4)}\end{cases}$, and $q(\eta)=2^{v_2(\eta)}q(\eta)_1$, $q(\eta)_1$ odd. Let $\chi_\eta$ be the primitive Dirichlet character corresponding to $\Bbb Q\left(\sqrt{\epsilon q(\eta)}\right)/\Bbb Q$. Then for each rational prime $p$, there exists a Laurent polynomial $\widetilde{Q}_{\eta,\Phi_p}(X_p)=\widetilde{Q}_{\eta,p}(X_p)\in \Z[X_p,X^{-1}_p]$ depending on $\eta$ and $\Phi_p={\rm char}_{V'(\Z_p)}$ satisfying 
$\widetilde{Q}_{\eta,p}(X_p)=\widetilde{Q}_{\eta,p}(X^{-1}_p)$  (see \cite[Theorem 4.3, 4.4]{KY2})  such that
$$a_{E_l}(\eta)=
C_{l,n}' L(\tfrac {n+1}2-l,\chi_\eta) \frak f_\eta^{l-\frac {n}2} \prod_p \widetilde 
 Q_{\eta,p}(p^{l-\frac {n}2}),
$$
where $C_{l,n}'$ is given in \cite[Theorem 4.8]{KY2}. 

\begin{theorem}[$n$ odd ($n\geq 3$)]\label{ITC-odd} 
Let $A_f(\eta)(g_\f)=c(\frak d_{\eta}) \frak f_{\eta}^{\frac{k-1}{2}}\ds\prod_{p|q(\eta)} \widetilde Q_{\eta,g_p\cdot \Phi_p}(\alpha_p)$ for $g_\f=(g_p)_p\in G(\A_\f)$. Then, for $\phi\in \Pi_f$ with 
$\phi_\f$ the unramified distinguished vector and $\phi_\infty=\phi^{{\rm Po}}_\infty$, there exists an 
absolute non-zero constant $C$ such that 
$$F_\phi(g)=(F_\phi)_0+C\sum_{0\neq\eta\in V'(\Bbb Q)\atop q(\eta)> 0} A_f(\eta)(g_\f)
\mathcal W_{2\pi \eta}(g_\infty),\ g=g_\f g_\infty \in G(\A)
$$
and it is a non-zero Hecke eigen, square integrable automorphic form of weight $l$ with respect to $G(\widehat{\Z})$. 
\end{theorem}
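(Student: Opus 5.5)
The proof will follow the same pattern as Theorem \ref{ITC}, with the half-integral weight form $h$ taking the role of $q(\eta)^{\frac{k-1}{2}}$.

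\emph{Step 1: reduction to constants.} By the decomposition $F_\phi=(F_\phi)_0+(F_\phi)_+$, multiplicity one of ${\rm Wh}(\Pi_{f,p})$, and Pollack's archimedean uniqueness \cite[Theorem 3.2.4]{Po}, the $\eta$-th coefficient factors as $c_\eta(\phi)\,w^{\mu_\f}_\eta(g_\f\cdot\phi_\f)\,\mathcal W_{2\pi\eta}(g_\infty)$. Here $\kappa=1$.

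\emph{Step 2: the local functionals.} I will compute the unramified local functional $w^{\mu_p}_\eta(g_p\cdot\phi_p)$ by realizing $\Pi_{f,p}$ inside ${\rm Ind}\,\mu_p\circ\nu$. Its Jacquet-type integral is the same local Siegel series integral that gives the Eisenstein coefficient, with $|\cdot|^{s}$ replaced by $\mu_p$. So it equals the local $L$-factor times $\widetilde Q_{\eta,g_p\cdot\Phi_p}(\alpha_p)$, times a normalization.

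For odd $n$ the local Siegel series \cite[Theorem 4.3, 4.4]{KY2} factors as
\[
L_p(\tfrac{n+1}{2}-l,\chi_\eta)^{-1}\text{-type term}\times \frak f_{\eta,p}\text{-power}\times\widetilde Q_{\eta,p}.
\]
Specializing $p^{l-\frac n2}\mapsto\alpha_p$ therefore produces a factor $L(\tfrac12,\pi\otimes\chi_\eta)$-like data, which cannot be a product of local constants. This is why the global coefficient $c_\eta(\phi)$ must absorb a quantity depending on the square class of $q(\eta)$. I will normalize $w^{\mu_p}_\eta$ so that it equals $\widetilde Q_{\eta,g_p\cdot\Phi_p}(\alpha_p)$ times $\frak f_{\eta,p}^{\frac{k-1}{2}}$, and equals $1$ when $p\nmid q(\eta)$ and $g_p\in G(\Z_p)$.

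\emph{Step 3: dependence of $c_\eta$ on $\eta$.} Automorphy of $F_\phi$ under $M(\Q)$ shows that $c_\eta$ depends only on the $M(\Q)$-orbit data. Combined with the local equivariance $\widetilde Q_{\eta,g_p\Phi_p}=\psi(\cdot)\widetilde Q_{g_p^{-1}\eta,\Phi_p}$ stated in the introduction, this shows that $c_\eta$ depends only on $\frak d_\eta$ through a function $c'(\frak d_\eta)$. The mechanism is that scaling $\eta$ by $t\in\Q^\times$ and moving by $\SO(2,n)(\Q)$ preserve the fundamental discriminant, while varying $\frak f_\eta$ is accounted for by the local polynomials.

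\emph{Step 4 (main obstacle): identify $c'(\frak d)$ with $C\,c(\frak d)$.} I will use the Fourier–Jacobi expansion along the Heisenberg-type parabolic. I plan to restrict $F_\phi$ to the embedded $\SO(2,n)$ (or use a seesaw with the theta lift from $\widetilde{\SL}_2$), which expresses the Fourier–Jacobi coefficients as theta lifts of $h$. The $\eta$-coefficient is then computed by unfolding: the coefficient of the theta lift of $\tau$ equals a Whittaker-type coefficient of the vector in $\tau$ at $q(\eta)$, that is $c(q(\eta))$, multiplied by local factors.

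Waldspurger/Kohnen's formula $c(\frak d\frak f^2)=c(\frak d)\sum_{d\mid\frak f}\mu(d)\chi_{\frak d}(d)d^{\frac k2-1}a(\frak f/d)$ then expresses $c(q(\eta))$ as $c(\frak d_\eta)\,\frak f_\eta^{\frac{k-1}{2}}\prod_p(\text{Laurent polynomial in }\alpha_p)$. The crux is matching this local Laurent polynomial with $\widetilde Q_{\eta,p}(\alpha_p)$. I would prove the match by checking that both satisfy the same recursion in $v_p(\frak f_\eta)$ coming from the Hecke operator $T(p^2)$ on $h$, against the known recursion of Katsurada-type Siegel series in \cite{KY2}, including the $p=2$ case.

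\emph{Step 5: remaining assertions.} Non-vanishing follows because $c(\frak d)\ne0$ for some $\frak d$ (Waldspurger), since $h\neq0$. Hecke-eigen, square integrability and weight $l$ follow from $\Pi_f$ being residual and spherical.
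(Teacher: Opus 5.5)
There is a genuine gap in Step 4, which carries the whole argument. Unfolding the theta integral against $\overline{\psi((\eta,\ast))}$ on $N$ does not give $c(q(\eta))$ times constants. Up to normalizations, one obtains $\int_{N_{\SL_2}(\A)\backslash\SL_2(\A)}W^{\psi_{q(\eta)}}_{\varphi}(h)\,(\omega(h,g)\Phi)(\eta;0,1)\,dh$. The torus part of this integral is cut off by the support of $\Phi_p$ and runs over the divisors of the content of $\eta$. The output is therefore a Maass-type sum $\sum_{d\mid {\rm cont}(\eta)}d^{(\ast)}\,c(q(\eta)/d^2)$. This is exactly what happens for the Saito--Kurokawa lift: the coefficient at $pT_0$ with $T_0$ primitive picks up an extra $c(D)$-term compared with a primitive $T$ of the same discriminant $p^2D$.

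Correspondingly, $\widetilde Q_{\eta,p}$ is a Siegel-series polynomial attached to $\eta$, not to $q(\eta)$: it depends on the $p$-content of $\eta$ as well as on $v_p(\mathfrak f_\eta)$. The Laurent polynomial produced by Kohnen's formula for $c(\mathfrak d_\eta\mathfrak f_\eta^2)/c(\mathfrak d_\eta)$ depends only on $q(\eta)$. So the identity you single out as the crux is false for $\eta$ imprimitive at $p$, and no recursion in $v_p(\mathfrak f_\eta)$ alone can prove it.

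Two smaller points. First, the Fourier--Jacobi coefficients along $J$ are forms on ${\rm Mp}_2(\A)$ that should be identified with $h$ itself; restricting $F_\phi$ to $\SO(2,n)$ plays no role. Second, in Step 3, Witt's theorem and (\ref{formulaC}) with $t=1$ show that $c_\eta(\phi)$ is a function of $q(\eta)$, and rescaling by $t$ contributes $\mu_{\mathbf f}(t)^{-1}$. So $c_\eta(\phi)$ is not a function of $\mathfrak d_\eta$ alone.

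The paper's route never compares Kohnen's polynomial with $\widetilde Q$. For the given $F_\phi$ it forms the Fourier--Jacobi coefficient at $S=-1$, with the following choices:
\begin{itemize}
\item $\Phi_p$ is chosen via the local map of Lemma \ref{localFJnonarchi}, so that $\beta^\psi_{-1}(\phi_p\otimes\Phi_p)$ is the spherical vector of $\tau_p$;
\item the Schwartz function at $\infty$ is the Gaussian of Lemma \ref{localFJarchi}.
\end{itemize}
By Lemma \ref{FJglobalDWF}, this is a cusp form on ${\rm Mp}_2(\A)$ whose Whittaker coefficients are $c_{(a,\mathbf 0,-1)}(\phi)$ against the normalized local functionals of $\tau$. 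Multiplicity one for $\widetilde{\SL}_2$ then identifies it with $h$, up to conjugation. This fixes all $c_{(a,\mathbf 0,-1)}(\phi)$, $a<0$, with a single constant $C$, uniformly over square classes. The step from $c(-a)$ to $c(\mathfrak d)\mathfrak f^{\frac{k-1}{2}}$ is purely the unramified $\widetilde{\SL}_2$ Whittaker computation.

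Every $\eta$ with $q(\eta)>0$ lies in the $\SO(V')(\Q)$-orbit of $(-q(\eta),\mathbf 0,-1)$, so (\ref{formulaC}) transports this to all $\eta$. All dependence on the finer $p$-adic position of $\eta$ is then carried by $w^{\mu_p}_\eta(g_p\cdot\phi_p)$, i.e.\ by the Siegel series (\ref{recoverQ}). That is where $\widetilde Q_{\eta,g_p\cdot\Phi_p}(\alpha_p)$ comes from.

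Your theta-unfolding could be repaired along the same lines by restricting to the representatives $(a,\mathbf 0,-1)$, where the divisor sum collapses. You would still have to do three things, none of which is in the proposal:
\begin{itemize}
\item prove the local identity between $\widetilde Q_{\eta,p}$ and Kohnen's local factor for such $\eta$, including $p=2$;
\item realize $\phi_{\mathbf f}\otimes\phi^{\rm Po}_\infty$ as an explicit theta lift;
\item check that the resulting archimedean and $2$-adic constants do not depend on $\eta$.
\end{itemize}
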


\begin{remark}\label{standLodd} The standard $L$-function of $\Pi_{f}$ is given by
$$L(s,\Pi_{f})=L(s,\pi_f)\prod_{i=0}^{n} \zeta\Big(s+\frac n2-i\Big).
$$

\end{remark}

\begin{remark} The constant term $(F_\phi)_0$ is the Oda-Rallis-Schiffmann lift \cite{Od, RS} to $\SO(V')=\SO(2,n)$ from $f$. 
\end{remark}

\begin{remark} The global Arthur parameter of $\Pi_f$ is given in Section \ref{Lfunction}.
Such a parameter is called highly non-tempered in \cite[Section 4.2]{BMM}.
By \cite[Theorem 4.1]{BMM} together with Proposition \ref{thetaprop}, there are no cuspidal representations whose global Arthur parameters coincide with that of $\Pi_f$. 
\end{remark}

We organize this paper as follows. In Section \ref{pre}, we give explicit forms of 
the Siegel parabolic subgroup $P$ and another maximal parabolic $Q$ which gives rise to the Jacobi group $J$ in our setting. 
In Section \ref{AFSO}, we recall Pollack's definition of certain types of automorphic forms on $G(\A)$ \cite{Po}. 
In Section \ref{Theta}, we recall basic facts of the theta lifts to $G$ from holomorphic elliptic newforms of level one and their local Langlands parameters.   
In Section \ref{DWF}, we discuss the degenerate Whittaker functions at each 
place. In the construction, in order to normalize the Jacquet integrals, the computation of the 
Siegel series carried out in \cite[Section 4]{KY2} plays an important role. 
In Section \ref{weil-rep}, we briefly recall the Weil representation for $J$.  
The Fourier-Jacobi expansion along $J$ of Eisenstein series associated to $P$ fixed by $G(\widehat{\Z})$ 
is also explicitly computed in Sections \ref{generalcase} and \ref{FJEl}. 
According to the computations in the previous section, we carry out similar computations in terms of degenerate Whittaker functions in Section \ref{FJE-DWF}. Readers may skip 
Section \ref{FJEl} which  is not logically necessary for the arguments in Section \ref{FJE-DWF}, but it may be helpful for understanding the definition of local Fourier-Jacobi maps. 
Without Section \ref{FJEl}, it may be difficult for figuring out those definitions. 

Then, we give proofs for the main theorems in Section \ref{proof}. 
We also discuss the $L$-functions and Arthur-parameters of the theta lifts in Section \ref{Lfunction}. In Section \ref{residual}, we explain that the theta lift is indeed in the residual spectrum.
Finally, Appendix is given for computing some integrals which show up in Section \ref{FJEl}. 

\smallskip  

\textbf{Acknowledgments.} We would like to thank Tamotsu Ikeda, Takuya Miyazaki, Hiraku Atobe, Hiroaki Narita and Jim Arthur for helpful discussions, and Yi Shan for pointing out some inaccuracies.
We thank KIAS in Seoul and Waseda University in Tokyo for their incredible hospitality during this research. 

\smallskip

\section{Preliminaries}\label{pre}
For each quadratic space $W$ or its representation matrix $A$, the corresponding symmetric pairing is denoted by $(\ast,\ast)_W$ or $(\ast,\ast)_A$. 

For each integer $n\ge 2$, let $A$ be a positive definite symmetric matrix in 
$M_{n-2}(\Z)$ as defined in Section \ref{intro}. Put  
$$J_{1,n-1}=\left(\begin{array}{ccc}
 0 & 0 & 1 \\
 0 & -A  & 0  \\ 
1 & 0 & 0 
\end{array}\right),\ J_{2,n}=\left(\begin{array}{ccc}
 0 & 0 & 1 \\
 0 & J_{1,n-1}  & 0  \\ 
1 & 0 & 0 
\end{array}\right)
,\ 
 J_{3,n+1}=\left(\begin{array}{ccc}
 0 & 0 & 1 \\
 0 & J_{2,n}  & 0  \\ 
1 & 0 & 0 
\end{array}\right).$$

Let $V'=
\mathbb{G}^{n+2}_a$ be the quadratic space associated to 
$J_{2,n}$. 
Let 
\begin{equation}\label{vv}
v_1:=(1,\overbrace{0,\ldots,0}^{n},1),\ v_2:=(0,1,\overbrace{0,\ldots,0}^{n-2},1,0) \in V'(\Q)
\end{equation}
so that $(v_i,v_j)_{V'}=\frac{1}{2}v_iJ_{2,n}{}^t v_j=\delta_{ij}$ for $1\le i,j\le2$. 
Let $V=\mathbb{G}_a e\oplus V'\oplus \mathbb{G}_a f=\mathbb{G}^{n+4}_a$  be the quadratic space associated to 
$J_{3,n+1}$ where 
$$e:=(1,\overbrace{0,\ldots,0}^{n+3}),\ f:=(\overbrace{0,\ldots,0}^{n+3},1)$$
so that $(e,f)_V=\frac{1}{2}eJ_{3,n+1}{}^t f=\frac{1}{2}$. Let $q$ be the quadratic form on $V'$ 
defined by $q(x)=\frac{1}{2}xJ_{2,n} {}^t x$ for $x\in V'$. 
Denote by $$(x,y)=q(x+y)-q(x)-q(y)$$ for $x,y\in V'$ the associated bilinear form. The quadratic form $\tilde q$ on $V$ for $J_{3,n+1}$ 
satisfies $\tilde q(\alpha e+v'+\beta f)=\alpha\beta+q(v')$.

Let
$$G=\SO(V)=\SO(3,n+1)=\{g\in \SL_{n+4}\ |\ {}^t g J_{3,n+1}g=J_{3,n+1}\}$$ and we also consider 
$$\SO(V')=\SO(2,n)=
\{g\in \SL_{n+2}\ |\ {}^t g J_{2,n}g=J_{2,n}\}.
$$

The natural embedding $V'\hookrightarrow V,\ v'\mapsto (0,v',0)$ yields an embedding 
$$\SO(V')\hookrightarrow \SO(V),\ g'\mapsto 
\left(\begin{array}{ccc}
 1 & 0& 0 \\
 0 & g'  & 0  \\ 
0 & 0 & 1 
\end{array}\right).$$ 
We also define the Weyl element $w_0\in G(\Q)$ such that $w_0(e)=f,\ w_0(f)=e$, and 
$w_0|_{V'}={\rm id}_{V'}$. 

Throughout this paper, we assume $G=\SO(V)$ splits at any finite place of $\Q$ as in Section \ref{intro}.

\begin{remark}\label{diffePo}
Our definition of $q$ is slightly different from the one in \cite{Po}. 
Then, our $\SO(V')$ is isomorphic over $\R$ to Pollack's one but not over $\Q$ 
since any normalized orthogonal transformation matrices have to involve $\sqrt{2}$. 
\end{remark}

\subsection{Siegel parabolic subgroup $P=MN$}
We define 
$$M=\{\diag(t,m,t^{-1})\in G\ |\ t\in \GL_1,\ m\in \SO(V')\}\simeq \GL_1\times \SO(V')$$ with 
$\nu:M\lra \GL_1,\ \diag(t,m,t^{-1})\mapsto t$, 
and 
$$N=\left\{n(\textbf{x}):=
\left(\begin{array}{ccc}
 1 & -\textbf{x}J_{2,n} & -\frac{1}{2}\textbf{x}J_{2,n}{}^t\textbf{x} \\
 0 & 1_{n+2}  & {}^t\bf{x}  \\ 
0 & 0 & 1 
\end{array}\right) \Bigg|\ \textbf{x}\in \mathbb{G}^{n+2}_a
\right\}\stackrel{\sim}{\lra}V',\ n(x)\mapsto x.$$
Then, $\diag(t,m,t^{-1})n\in P$ acts on $x\in V'$ by $t(x\cdot {}^t m)$ where 
 the dot ``$\cdot$'' stands for the usual matrix multiplication. 

Once a suitable Haar measure is chosen, 
the modulus character of $P$ is given by $\delta_P(\diag(t,m,t^{-1}))=|t|^{n+2}$.

\subsection{Another maximal parabolic subgroup and the Jacobi group}\label{another}
Let $V''$ be the quadratic space associated to $J_{1,n-1}$. 
Let $$Q=\left\{
\left(\begin{array}{ccc}
 A_1 & \ast &  \ast \\
 0 & m & \ast  \\ 
0 & 0 & A_2 
\end{array}\right)\in G \Bigg|\ A_1,A_2\in \GL_2,\ m\in {\rm O}(V'')
\right\}$$
be another maximal parabolic subgroup. Explicitly, if we write $Q=LU$, then 
$$L=\left\{
\left(\begin{array}{ccc}
 \det(\gamma)^{-1}\eta \gamma\eta & 0 &  0 \\
 0 & m & 0  \\ 
0 & 0 & \gamma 
\end{array}\right)\ \Bigg|\ \gamma\in \GL_2,\ m\in \SO(V'')
\right\}\simeq \GL_2\times \SO(V'')
$$
where $\eta=\left(\begin{array}{cc}
 -1 & 0   \\ 
0 & 1 
\end{array}\right)$ 
and the unipotent radical $U$ is consisting of 
all $v(x,y,z):=v(x,y,0)v(0,0,z)\in G$ where 
$$v(x,y,0)=
\left(\begin{array}{ccccc}
1 & 0 & -yJ_{1,n-1}  & -\frac{1}{2}xJ_{1,n-1}{}^t y & -\frac{1}{2}yJ_{1,n-1}{}^t y  \\ 
0 & 1 & -xJ_{1,n-1}  & -\frac{1}{2}xJ_{1,n-1}{}^t x &-\frac{1}{2} yJ_{1,n-1}{}^t x  \\ 
0 & 0 & 1_{n}  & {}^t x & {}^t y  \\ 
0 & 0 & 0  &1 & 0  \\ 
0 & 0 &0 &0 & 1  \
\end{array}\right),\ 
v(0,0,z)=
\left(\begin{array}{ccccc}
1 & 0 & 0  & -z & 0  \\ 
0 & 1 & 0  & 0 & z  \\ 
0 & 0 & 1_{n}  & 0 & 0  \\ 
0 & 0 & 0  &1 & 0  \\ 
0 & 0 &0 &0 & 1  \
\end{array}\right)$$
for 
$x,y\in \mathbb{G}^{n}_a, z\in \mathbb{G}_a$. It is easy to see that $v(0,y,z)=n(\textbf{x})\in N$ with $\textbf{x}=(z,y,0)$.  
Then, 
$$U=\{v(x,y,z)\in G\ |\ x,y\in \mathbb{G}^{n}_a, z\in \mathbb{G}_a\}$$ is a Heisenberg group with the center $Z=\{v(0,0,z)\ |\ z\in \mathbb{G}_a\}$ and we have 
$$v(x,y,z)v(x',y',z')=v(x+x',y+y',z+z'+\frac{1}{2}\langle (x,y),(x',y')\rangle)$$
where $\langle (x,y),(x',y')\rangle:=xJ_{1,n-1}{}^ty'-x'J_{1,n-1}{}^ty$ 
defines a symplectic form on $XY$ with $X=\{v(x,0,0)\ |\ x\in \mathbb{G}^n_a\},\ 
Y=\{v(0,y,0)\ |\ y\in \mathbb{G}^n_a\}$. We identify $X$ with $V''$. 
Put 
\begin{equation}\label{sym}
\sigma(x,y):=\langle (x,0),(0,y)\rangle=x J_{1,n-1}{}^t y
\end{equation}
which shows up in the action of the Weil representation in our setting.

We define 
\begin{equation}\label{mapl}
H:=\Bigg\{\ell(\gamma)\in L\ \Bigg|\ \gamma=
\left(\begin{array}{cc}
 a & b   \\ 
c & d  
\end{array}\right)\in \SL_2\Bigg\}\simeq \SL_2,\ \ell(\gamma)\longleftrightarrow \gamma
\end{equation}
where 
$$\ell(\gamma)=\left(\begin{array}{ccc}
\det(\gamma)^{-1}\eta \gamma\eta & 0  & 0    \\ 
0 & 1_{n}  & 0   \\ 
0 & 0 &  \gamma
\end{array}\right),\ \eta=\left(\begin{array}{cc}
 -1 & 0   \\ 
0 & 1 
\end{array}\right).$$ 
Note that 
\begin{equation}\label{elldiag}
\ell(\diag(a,a^{-1}))=\diag(a^{-1},1_{n+2},a)\in M. 
\end{equation}

It is easy to see that 
$$\ell^{-1}(\gamma)v(x,y,z)\ell(\gamma)=v(ax+cy,bx+dy,\det(\gamma)z),\ \gamma=
\left(\begin{array}{cc}
 a & b   \\ 
c & d  
\end{array}\right)\in \GL_2,\ x,y\in \mathbb{G}^n_a,\ 
z\in \mathbb{G}_a$$
as in \cite[(4.5),p.231]{KY}. Thus, we have the Jacobi group in our setting:
$$J:=U\rtimes H.
$$

\section{Automorphic forms on $G=\SO(3,n+1)$}\label{AFSO}

Recall $G=\SO(V)$. 
The maximal compact subgroup $K$ of $G(\Bbb R)$ is $S({\rm O}(3)\times {\rm O}(n+1))$. 
Let $K_0=\SO(3)\times \SO(n+1)$.  
The projection onto the first factor induces a surjective homomorphism $S({\rm O}(3)\times {\rm O}(n+1))\rightarrow {\rm O}(3)=({\rm SU}(2)/\mu_2)\rtimes \{\pm 1\}$. For each $\lambda\in \C$, we consider the normalized induced representation 
$$I(\lambda):={\rm Ind}^{G(\R)}_{P(\R)} \, |\nu|^{\lambda-\frac{n}{2}-1}.$$
By the Peter-Weyl theorem, we have 
$${\rm Ind}^{K_0}_{K_0\cap P(\R)}\mathbf{1}\simeq 
C^\infty((S^2\times S^{n})/\{\pm 1\})\simeq \bigoplus_{a,b\ge 0\atop a+b:\, {\rm even}}
\mathcal{H}^{a,b},\ 
\mathcal{H}^{a,b}:=\mathcal{H}^a(\C^3)\boxtimes \mathcal{H}^b(\C^{n+1})$$
where $S^k$ stands for the $k$-dimensional sphere and $\mathcal{H}^i(\C^j)$ 
stands for the harmonic polynomials on $\C^j$ of degree $i$. Using this, we have 
\begin{equation}\label{K-type}
I(\lambda)|_K= \bigoplus_{a,b\ge 0\atop a+b:\, {\rm even}}{\rm Ind}^K_{K_0}
\mathcal{H}^{a,b}.
\end{equation}
If the twist of $\mathcal{H}^{a,b}$ by $K/K_0$ is isomorphic to $\mathcal{H}^{a,b}$, then 
$${\rm Ind}^K_{K_0}
\mathcal{H}^{a,b}=\mathcal{H}^{a,b,+}\oplus \mathcal{H}^{a,b,-}$$
where $\mathcal{H}^{a,b,\ve}$ is an extension of $\mathcal{H}^{a,b}$ to $K$ with the 
action of a character $\ve:K/K_0\lra \{\pm 1\}$. Otherwise, ${\rm Ind}^K_{K_0}
\mathcal{H}^{a,b}$ is irreducible. 
 In particular, if $b=0$ (hence $a$ is even, say $a=2l$), then 
$\mathcal{H}^{a,b}$ admits a natural extension to a representation 
$\mathcal{H}^{a,b,+}$ of $K$; by abuse of notation, we denote this extension again by 
$\mathcal{H}^{a,b}$. It appears in $I(\lambda)|_K$ with multiplicity one.

\begin{defin}\label{MFG} Let $l\ge 0$ be an integer. Let $\tau_l:K\lra \GL(\Bbb V_l),\ \Bbb V_l={\rm Sym}^{2l}(\Bbb C^2)$ be the $(2l+1)$-dimensional algebraic representation of $K$ that factors through ${\rm O}(3)$ and we regard it with 
 $\mathcal{H}^{2l}(\C^3)\boxtimes \mathbf{1}$ as a $K$-type of $I(l+1)$. 
We fix a basis $\{e_v:=[x]^{l+v}[y]^{l-v}\}_{v=-l}^l$ of $\Bbb V_l$ where  $[x^j]=\frac {x^j}{j!}$ and  $[y^j]=\frac {y^j}{j!}$ for 
any integer $j\ge 0$. 
Modular forms on $G$ of weight $l$ are $\Bbb V_l$-valued automorphic functions $\phi$ on $G(\Bbb A)$ that satisfy:

(1) $\phi(g k)=\tau_l(k^{-1})\phi(g)$ for all $g\in G(\Bbb A)$ and $k\in K$. 

(2) $\phi$ is annihilated by a special differential operator $\mathcal D_l$. 

(3) As a $(\frak g, K)$-module, $\phi$ generates an irreducible constituent of $I(l+1)$. 

Cusp forms on $G$ of weight $l$ are similarly defined as in \cite{BJ}.  
\end{defin}

\begin{prop}\label{inf-type}Assume $l>\frac{n}{2}$ is an even integer. 
Let $F$ be a non-zero modular form on $G$ of weight $l$ and $\Pi_\infty$ be 
the corresponding irreducible admissible representation of $G(\R)$. 
Then, as a $(\frak g,K)$-module, $\Pi_\infty$ is isomorphic to a unique irreducible component of $\Pi^{3,n+1}_{l,0}|_{G(\R)}$ 
containing $\mathcal{H}^{2l}(\C^3)\boxtimes \mathbf{1}$ as a minimal $K$-type.  
(see \cite[Proposition 2.1]{MS} for $\Pi^{3,n+1}_{l,0}$).
\end{prop}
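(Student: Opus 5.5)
By Definition \ref{MFG}(3), the $(\frak g,K)$-module generated by $F$ is an irreducible constituent $\Pi_\infty$ of $I(l+1)$ in which the $K$-type $\mathcal{H}^{2l}(\C^3)\boxtimes\mathbf{1}$ occurs. By (\ref{K-type}) this $K$-type occurs in $I(l+1)$ with multiplicity one. Hence $\Pi_\infty$ is characterized as the unique irreducible subquotient of $I(l+1)$ containing $\mathcal{H}^{2l}(\C^3)\boxtimes\mathbf{1}$. The proof therefore reduces to showing that the component of $\Pi^{3,n+1}_{l,0}|_{G(\R)}$ described in the statement is also an irreducible subquotient of $I(l+1)$ containing this $K$-type. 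The multiplicity-one argument then identifies the two.

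\textbf{Step 1: $K$-types of $I(l+1)$ and the position of $\mathcal{H}^{2l,0}$.} Using the $K$-type decomposition (\ref{K-type}), I will study how $\mathfrak p$ acts between $K$-types. In the standard model (cf.\ Howe--Tan and Lee for degenerate principal series of $\OO(p,q)$), $\mathfrak p\simeq \C^3\boxtimes\C^{n+1}$ moves $(a,b)$ to $(a\pm1,b\pm1)$. The transition coefficients are explicit linear functions of $\lambda$, $a$, $b$.

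At $\lambda=l+1$, I will locate the walls where these coefficients vanish. For $l>\frac n2$ even, I expect the subquotient generated by $(2l,0)$ to have $K$-types lying in a region of the form $\{a-b\ge 2l\}$. This should exhibit $\mathcal{H}^{2l,0}$ as the minimal $K$-type, in the sense of Vogan, of that constituent. The hypothesis $l>\frac n2$ is used exactly here, to make sure the relevant wall is of the expected type and that no other wall passes through $(2l,0)$.

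\textbf{Step 2: realize $\Pi^{3,n+1}_{l,0}$ inside $I(l+1)$.} From \cite[Proposition 2.1]{MS}, $\Pi^{3,n+1}_{l,0}$ is a cohomological $A_{\frak q}(\lambda)$ module for $\OO(3,n+1)$. Its $K$-types are described there, and its minimal $K$-type is $\mathcal{H}^{2l}(\C^3)\boxtimes\mathbf{1}$. It has the same infinitesimal character as $I(l+1)$, since both equal that of the finite-dimensional representation attached to weight $l$.

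Two routes give the embedding into $I(l+1)$:
\begin{itemize}
\item the restriction to $G(\R)$ embeds into $I(l+1)$ via the theta correspondence, as the image of the holomorphic discrete series of weight $l+1$ of $\widetilde{\SL}_2(\R)$ through the Rallis--Siegel--Weil map into induced representations;
\item alternatively, one can compare $K$-type supports directly with Step 1.
\end{itemize}
The restriction from $\OO(3,n+1)$ to $G(\R)=\SO(3,n+1)$ splits into at most two pieces. Exactly one piece contains the $K$-type $\mathcal{H}^{2l,0}$ as extended by $+$ in the paragraph preceding Definition \ref{MFG}. This gives the uniqueness claimed in the statement.

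\textbf{Step 3: conclude.} Both $\Pi_\infty$ and the component from Step 2 are irreducible subquotients of $I(l+1)$ containing $\mathcal{H}^{2l}(\C^3)\boxtimes\mathbf{1}$. Since this $K$-type has multiplicity one in $I(l+1)$, the two coincide.

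The main obstacle is Step 2: proving that the relevant component of $\Pi^{3,n+1}_{l,0}|_{G(\R)}$ really appears as a subquotient of $I(l+1)$, rather than only sharing its minimal $K$-type and infinitesimal character. If the theta-map argument is delicate, I would fall back on Vogan's uniqueness theorem. An irreducible $(\frak g,K)$-module with a given infinitesimal character and a given minimal $K$-type is unique, provided the Vogan parameter determined by $\mathcal{H}^{2l,0}$ pins down the Langlands data. That in turn requires checking that the lowest $K$-type is unitary-extremal, which is again where $l>\frac n2$ enters.
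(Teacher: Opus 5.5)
Your argument is essentially the paper's: both rest on the multiplicity-one occurrence of $\mathcal{H}^{2l}(\C^3)\boxtimes\mathbf{1}$ in $I(l+1)$ and on the fact that a component of $\Pi^{3,n+1}_{l,0}|_{G(\R)}$ is a constituent of $I(l+1)$ containing that $K$-type, after which irreducibility of $\Pi_\infty$ forces the identification. The only difference is that the paper takes your ``main obstacle'' (Step 2) directly from \cite[Proposition 2.1]{MS}, which already realizes $\Pi^{3,n+1}_{l,0}$ as the unique irreducible submodule of $I(l+1)$ with minimal $K$-type $\mathcal{H}^{2l}(\C^3)\boxtimes\mathbf{1}$ when $l>\frac{n}{2}$ is even, so your Step 1 wall analysis and the theta-correspondence and Vogan-uniqueness detours are not needed.
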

\begin{proof}By definition, $\Pi_\infty$ is a constituent of $I(l+1)$ with a 
$K$-type $\mathcal{H}^{2l}(\C^3)\boxtimes \mathbf{1}$.  
Since $l>\frac{n}{2}$ and $l$ is even, by \cite[Proposition 2.1]{MS},  there is a unique irreducible submodule $\Pi^{3,n+1}_{l,0}$ of $I(l+1)$ with the minimal $K$-type  
 $\mathcal{H}^{2l}(\C^3)\boxtimes \mathbf{1}$. As observed, such a $K$-type is multiplicity free in $I(l+1)$. Thus, 
$\Pi_\infty$ and a unique irreducible component of $\Pi^{3,n+1}_{l,0}|_{G(\R)}$ share the same $K$-type. 
The claim now follows from irreducibility of $\Pi_\infty$.
\end{proof}

\begin{theorem}\cite{Po} Suppose $\phi$ is a modular form of weight $l\geq 1$ on $G$. Then 
$$\phi(g)=\phi_0(g)+\sum_{\eta\in V'(\Bbb Q)\atop q(\eta)\geq 0, \eta\neq 0} a_\phi(\eta)(g_{\f})\mathcal W_{2\pi \eta}(g_\infty),
$$
for $g=g_{\f}g_\infty$ in $G(\Bbb A_{\f})\times G(\Bbb R)$, where $a_\phi(\eta): G(\Bbb A_{\f})\longrightarrow \Bbb C$ is a locally constant function. Moreover,
$$\phi_0(m)=\Phi(m)x^{2l}+\beta(m_f)x^ly^l+\Phi'(m)y^{2l},
$$
where $\Phi$ is an automorphic function associated to a holomorphic modular form of weight $l$ on $M$, $\beta$ is a locally constant function on $M(\Bbb A_{\f})$, and $\Phi'$ is a certain $(K\cap M)$-right translate of $\Phi$.
\end{theorem}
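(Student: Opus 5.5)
We expand $\phi$ along the abelian unipotent radical $N\simeq V'$ of the Siegel parabolic $P$. Since $N(\Q)\bs N(\A)$ is compact and $\phi$ is left $N(\Q)$-invariant, we can write
$$\phi(g)=\sum_{\eta\in V'(\Q)}\phi_\eta(g),\qquad \phi_\eta(g)=\int_{V'(\Q)\bs V'(\A)}\phi(n(x)g)\,\psi(-(\eta,x)_{V'})\,dx.$$
We treat the terms $\eta\neq 0$ first and then the constant term $\phi_0$.

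For $\eta\neq0$, fix $g_\f$. The function $g_\infty\mapsto\phi_\eta(g_\f g_\infty)$ is $\Bbb V_l$-valued and satisfies the following:
\begin{itemize}
\item[(i)] it transforms on the left under $N(\R)$ by $e^{2\pi\sqrt{-1}(\eta,x)}$;
\item[(ii)] it transforms on the right under $K$ via $\tau_l$;
\item[(iii)] it is annihilated by $\mathcal D_l$;
\item[(iv)] it has moderate growth.
\end{itemize}
By the Iwasawa decomposition $G(\R)=N(\R)M(\R)K$, such a function is determined by its restriction to $M(\R)$. There, condition (iii) becomes a first-order system of ODEs/PDEs in the $M$-variables. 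The key input from \cite{Po} is the classification of its solutions: there is a one-dimensional space of moderate-growth solutions, spanned by $\mathcal W_{2\pi\eta}$, when $q(\eta)\ge0$, and only the zero solution when $q(\eta)<0$. One way to see this is to reduce along $M(\R)\simeq\GL_1\times\SO(2,n)$. The resulting ODE in the $t$-variable has solutions of the form $K$-Bessel times exponential, with growth forced to be rapid unless $\eta$ lies in the closure of the positive cone. Consequently $\phi_\eta(g_\f g_\infty)=a_\phi(\eta)(g_\f)\,\mathcal W_{2\pi\eta}(g_\infty)$, with $a_\phi(\eta)(g_\f)$ locally constant in $g_\f$ because $\phi$ is smooth at the finite places. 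I expect this uniqueness statement, and in particular the vanishing of the coefficients with $q(\eta)<0$ together with the growth analysis, to be the main obstacle. I would prove it by explicitly writing $\mathcal D_l\phi_\eta$ in coordinates $t\,\kappa\,\eta$, following \cite[Section 3]{Po}.

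For the constant term, $\phi_0$ is left $N(\A)$-invariant and satisfies the same equation $\mathcal D_l\phi_0=0$, now with trivial character on $N$. We restrict to $M(\A)$ and expand $\phi_0$ in the basis $e_v$ of $\Bbb V_l$. The differential equation forces the following:
\begin{itemize}
\item[(a)] only the components $v=\pm l$ and $v=0$ can be nonzero;
\item[(b)] the extremal components are holomorphic, of weight $l$, in the $\SO(2,n)$-variable, which gives $\Phi$ and, via the $(K\cap M)$-element interchanging $x$ and $y$, the right translate $\Phi'$;
\item[(c)] the middle component is annihilated by first-order operators in all archimedean directions, hence depends only on $m_\f$, which gives $\beta$.
\end{itemize}
Automorphy of $\Phi$ on $M$ follows from the left $M(\Q)$-invariance of $\phi_0$.

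Combining the two parts, and discarding the terms $q(\eta)<0$, yields the stated expansion.
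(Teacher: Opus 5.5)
Your strategy is the one behind the cited result. The paper gives no proof of its own and quotes \cite{Po}, and your outline follows the same route: expand along the abelian radical $N$, reduce to $M(\R)$ via $G(\R)=N(\R)M(\R)K$, and invoke Pollack's classification of moderate-growth solutions of $\mathcal D_l$ with character $\psi((\eta,\cdot))$ \cite[Theorem 3.2.4]{Po}. That classification is exactly the uniqueness statement you single out, and you then run the same first-order system on the constant term.

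\textbf{Step (c) fails as written.} The derivatives along $\SO(2,n)(\R)$ do kill the middle coefficient. The derivative along the archimedean center $t\mapsto\diag(t,1_{n+2},t^{-1})$ of $M$ does not. In that direction the system is an Euler equation, and its solution is the same prefactor $t^l|t|$ that appears in $\W_\eta(t,m)$. So the middle term is $\beta(m_f)\,\nu(m_\infty)^l|\nu(m_\infty)|\,x^ly^l$, not $\beta(m_f)\,x^ly^l$. The displayed formula in the statement suppresses this archimedean factor, and any argument aiming at literal independence of $m_\infty$ must break down.

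The Eisenstein series $E_l$ of Section~\ref{FJEl} shows this concretely. In its constant term along $N$, the identity double coset contributes the inducing section. On $M(\R)$ this is $f(m)=|t|^{l+1}f(1)$, and $f(1)$ has nonzero $x^ly^l$-component. The other two cosets of $P\bs G/P$ contribute $|t|$ and $|t|^{n+1-l}$ times functions not depending on $t$. To see this, conjugate $\diag(t,1_{n+2},t^{-1})$ through the integrals over $(w^{-1}Pw\cap N)\bs N$, which have dimensions $1$ and $n+2$. Since these exponents differ from $l+1$, nothing cancels the $|t|^{l+1}$.

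\textbf{The mechanism you give for $q(\eta)<0$ is also off.} Moderate growth in $t$ only removes the $I$-Bessel solutions of the radial equation, and it does not see the sign of $q(\eta)$. The real obstruction is a singularity. The surviving candidate is $t^l|t|\,(|u|/u)^vK_v(|u|)$ with $u=u_\eta(t,m)=it(\eta,m(iv_1-v_2))$, and it is singular exactly where $(\eta,mv_1)=(\eta,mv_2)=0$.
\begin{itemize}
\item If $q(\eta)\ge 0$ and $\eta\neq 0$, this locus is empty, because $mv_1,mv_2$ span a positive definite plane. This is also why $\W_{2\pi\eta}$ makes sense for rank one $\eta$.
\item If $q(\eta)<0$, then $\eta^\perp$ has signature $(2,n-1)$, so some $m\in\SO(2,n)(\R)$ moves $\langle v_1,v_2\rangle$ into it. Smoothness then forces the coefficient to vanish.
\end{itemize}
Because you defer this step to \cite{Po}, the logic of your outline is unaffected. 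It is the point to get right if you write out the coordinates yourself.
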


Here $\mathcal W_{\eta}(g): G(\Bbb R)\longrightarrow \Bbb V_l$ is a generalized Whittaker function of type $\eta$ satisfying 
\begin{enumerate}
\item $\mathcal W_\eta(n(x)g)=e^{i(\eta,x)} \mathcal W_\eta(g)$;

\item $\mathcal W_\eta(gk)=k^{-1} \mathcal W_\eta(g)$;

\item $D_l \mathcal W_\eta(g)=0$;

\item Suppose $\eta\in V'(\Bbb R)$, $\eta\ne 0$, and $(\eta,\eta)\geq 0$. For  $t\in GL_1(\Bbb R), m\in \SO(V')(\Bbb R)$, set
\begin{equation}\label{ourueta}
u_\eta(t,m)=it(\eta,m(iv_1-v_2)).
\end{equation}
We should remark that Pollack used $u_\eta(t,m)=it\sqrt{2}(\eta,m(iv_1-v_2))$ in \cite{Po} 
because of the shape of $q$ (see also Remark \ref{diffePo}). 
 Then
\begin{equation}\label{PollackSpherical}
\mathcal W_\eta(t,m)=t^l |t| \sum_{-l\leq v\leq l} \left(\frac {|u_\eta(t,m)|}{u_\eta(t,m)}\right)^v K_v(|u_\eta(t,m)|) [x^{l+v}][y^{l-v}],
\end{equation}
where $K_v$ is the $K$-Bessel function defined by
$$K_v(y)=\frac 12\int_0^\infty e^{-y(t+t^{-1})/2} t^v\, \frac {dt}t.
$$
\end{enumerate}

\begin{defin}\label{PollackSection} We define the Pollack's section $\phi^{{\rm Po}} \in I(l+1)$ by 
$$\phi^{{\rm Po}}(g)=|t|^{l+1} \Big(\sum_{v=-l}^l \tau_l(k^{-1})e_v\Big),\ g=\diag(t,m,t^{-1})nk\in G(\R).$$
By the previous proposition, it belongs to a unique irreducible component of $\Pi^{3,n+1}_{l,0}|_{G(\R)}$ 
containing $\mathcal{H}^{2l}(\C^3)\boxtimes \mathbf{1}$ as a minimal $K$-type. 
For any $\eta\in V'(\R)$ with $q(\eta)>0$, define the Jacquet integral 
$$g\mapsto \int_{N(\R)}\phi^{{\rm Po}}(w n(x)g)\psi((\eta,x))dx$$
where we identify $V'(\R)$ with $N(\R)$. 
By \cite[Theorem 3.2.4 and Theorem 4.5.9]{Po}, it is a non-zero constant multiple of 
$\mathcal{W}_{2\pi \eta}(g)$. The constant is in dependent of $\eta$ because $M(\R)$ acts transitively on 
the set $\{\eta\in V(\R)\ |\ q(\eta)>0\}$.   
\end{defin}


The following fact is not used in this paper, but may be useful for the study of modular forms on $G$.

\begin{prop}\label{cuspforms} Let $\phi$ be a modular form of weight $l\ge 1$ and be of level one, hence it is fixed by $G(\widehat{\Z})$. 
If it has the Fourier expansion of the form
$$\phi(g)=\sum_{\eta\in V'(\Bbb Q)\atop q(\eta)>0} a_\phi(\eta)(g_f)\mathcal W_{2\pi \eta}(g_\infty), 
$$
then $\phi$ is a cusp form. 

\end{prop}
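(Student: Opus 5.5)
The plan is to show that every constant term of $\phi$ along a proper parabolic subgroup of $G$ vanishes. Since $G$ is $\Q$-split at all finite places and has $\Q$-rank $3$, this suffices for cuspidality in the sense of \cite{BJ}. The given expansion already says the constant term along $N$ is $0$, because the term $\phi_0$ is absent. Every standard parabolic is contained in one of the maximal parabolics $P$, $Q$, or the parabolic $R$ stabilizing an isotropic line. I will choose the flag so that each standard parabolic contains either $N$ or the unipotent radical of $Q$ or $R$. It is then enough to show the vanishing of the constant terms along the unipotent radicals $N$, $U$ (for $Q$), and $U_R$ (for the remaining maximal parabolic).

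For $U=XYZ$, the centre $Z$ and $Y$ lie in $N$: indeed $v(0,y,z)=n((z,y,0))$. Integrate $\phi$ first over $(Z\cdot Y)(\Q)\bs(Z\cdot Y)(\A)\subset N$. Inserting the Fourier expansion, this keeps exactly the $\eta$ with $(\eta,(z,y,0))_{V'}=0$ for all $z,y$. With the pairing $J_{2,n}$, this forces $\eta$ to be supported only on the last coordinate, say $\eta=(0,\dots,0,c)$, and such an $\eta$ has $q(\eta)=0$. Since only $\eta$ with $q(\eta)>0$ occur, the integral over $ZY$ already vanishes, and therefore so does the full constant term along $U$. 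The same argument applies to left translates $\phi(\cdot\, g)$ for all $g\in G(\A)$, because right translation by $g$ does not change which $\eta$ occur (only the coefficient functions $a_\phi(\eta)(g_\f)$ and the archimedean factor change, via $\mathcal W_{2\pi\eta}(g_\infty)$). So the constant term vanishes as a function on $G(\A)$.

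For the third maximal parabolic, associated to an isotropic $3$-plane, the unipotent radical $U_3$ meets $N$ in the subgroup $n(\mathbf{x})$ where $\mathbf{x}$ runs over an isotropic $2$-plane $W\subset V'$. Take $W$ spanned by the first two coordinate vectors. Integrating the expansion over $N\cap U_3$ keeps only the $\eta\in W^{\perp}$. Here $W^\perp\supseteq W$ is $n$-dimensional, and $q$ restricted to $W^\perp$ factors through $W^\perp/W\simeq$ the negative definite part $-A$. Hence $q(\eta)\le 0$ on $W^\perp$, and again no index with $q(\eta)>0$ survives. Since a maximal flag consists of isotropic subspaces of dimensions $1,2,3$, every proper parabolic contains one of these three unipotent radicals or is conjugate to such by $G(\Q)$. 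Because $\phi$ is left $G(\Q)$-invariant, the vanishing of constant terms on $G(\A)$ for the standard ones suffices.

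The main obstacle is the bookkeeping of which subgroup of each unipotent radical lies in $N$, together with the sign conventions of $J_{2,n}$. I also need to make sure that the formal interchange of the sum over $\eta$ with the compact integrals is justified. This follows from the absolute and uniform convergence of the Fourier expansion on compacta (the $K$-Bessel decay in \eqref{PollackSpherical}). The level-one hypothesis is used only to guarantee that the coefficients $a_\phi(\eta)$ are supported on a lattice, which makes this convergence standard.
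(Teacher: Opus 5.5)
Your proposal is correct and is essentially the paper's argument: reduce to the three standard maximal parabolics. For $Q$ and for the $\GL_3\times\SO(n-2)$-parabolic, you first integrate over a normal subgroup lying inside $N$, on which $\psi_\eta$ is non-trivial whenever $q(\eta)>0$; your $ZY$ is the paper's $U_1$, and your $n(W)$ lies in the centre $Z'$ of $U_R$ used there. The paper uses level one and $G(\A)=R(\Q)(G(\widehat{\Z})\times G(\R))$ to reduce to integrals over $U_R(\Z)\backslash U_R(\R)$, whereas you argue adelically for every $g$; beyond that there are only two harmless slips: the vectors orthogonal to all $(z,y,0)$ are $(c,0,\ldots,0)$ rather than $(0,\ldots,0,c)$, and $N\cap U_3$ is really $n(W^\perp)$, not $n(W)$ (integrating over either subgroup leaves only $\eta$ with $q(\eta)\le 0$).
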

\begin{proof} By assumption, $G$ splits everywhere at finite places. Thus, 
we have $G(\Q)=R(\Q)G(\Z)$ for any maximal standard $\Q$-parabolic subgroup  $R$ of $G$. 
Therefore, we may check the cuspidality only for three maximal standard 
$\Q$-parabolic subgroups. 

Since $G(\A)=G(\Q)(G(\widehat{\Z})\times G(\R))$, we may write 
 $$\phi(g)=\sum_{\eta\in V'(\Bbb Q)\atop q(\eta)>0} 
 a_\phi(\eta)(1_f)\mathcal W_{2\pi \eta}(g_\infty),\ g=\gamma k_f g_\infty. 
$$
For each  maximal standard $\Q$-parabolic subgroup $R$ with the unipotent radical $U_R$, by using the decomposition 
$G(\A)=R(\Q)(G(\widehat{\Z})\times G(\R))$, we have 
\begin{eqnarray*}
\int_{U_R(\Q)\bs U_R(\A)}\phi(ug)du &=&\int_{U_R(\Z)\bs U_R(\R)}\phi(u_\infty g_\infty)du_\infty \\
&=&\sum_{\eta\in V'(\Bbb Q)\atop q(\eta)>0} 
 a_\phi(\eta)(1_f)
 \int_{U_R(\Z)\bs U_R(\R)}
 \mathcal W_{2\pi \eta}(u_\infty g_\infty)du_\infty.
\end{eqnarray*}
Thus, we may check the vanishing of $\ds\int_{U_R(\Z)\bs U_R(\R)}
 \mathcal W_{2\pi \eta}(u_\infty g_\infty)du_\infty$ for each $R$. 
 
Since $q(\eta)\neq 0$, the constant term along $N$ (the unipotent radical of $R=P$) is vanishing. 

When $R=Q=LU$, let $U_1=\{v(0,y,z)\in U\}\supset U_2=\{v(0,y,z)\in U\ |\ y=(0,\ldots,0,y_n)\}$. Note that $U_1$ is a normal subgroup so that $U/U_1\simeq X$ and $U_2$ is a normal subgroup of $U_1$ so that $U_2/U_1\simeq Y':=\{v(0,y,0)\ |\ y=(y_1,\ldots,y_{n-1},0)\}$.   
If we write $\eta=(\eta_1,\eta_2,\xi,\eta_{n+1},\eta_{n+2})\in V'(\Q)$, then
$q(\eta)=\eta_1\eta_{n+2}+\eta_2\eta_{n+1}-\xi A {}^t\xi$. Thus, $\eta_1\eta_{n+2}+\eta_2\eta_{n+1}\neq 0$ 
since $q(\eta)>0$. Thus, we have $(\eta_1,\eta_2)\neq (0,0)$. 
Then $\ds\int_{U_R(\Z)\bs U_R(\R)}
 \mathcal W_{2\pi \eta}(u_\infty g_\infty)du_\infty=$
 $$
 \int_{X(\Z)\bs X(\R)}\Bigg(
 \int_{U_1(\Z)\bs U_1(\R)}\psi((\eta,u_{1,\infty}))du_{1,\infty} \Bigg)
 ) \mathcal W_{2\pi \eta}(v(x_\infty,0,0) g_\infty) dx_\infty.$$
The inner integral becomes  
$$\int_{Y'(\Z)\bs Y'(\R)}\psi(-y'_\infty A {}^t \xi)d y'_\infty\times 
\int_{U_2(\Z)\bs U_2(\R)}\psi(\eta_1 z_\infty+\eta_2 y_{n,\infty})dz_\infty dy_{n,\infty}=0$$
since the second integral vanishes by $(\eta_1,\eta_2)\neq (0,0)$.

Now let $R=M_R U_R$ with
$M_R\simeq \GL_3\times \SO(n-2)$.
Let $Z'$ be the center of $U_R$. It is easy to see that $Z'$ contains $U_2$.
Thus, the vanishing follows by a similar argument to that in the previous case.
\end{proof}

By definition, vectors $\eta\in V'(\Bbb Q)$ with $q(\eta)>0$ are anisotropic vectors and they are called rank 2 elements in \cite{Po}.

\section{Theta lifts to $\OO(V)$ and its restriction to $G$}\label{Theta}

Throughout this section we assume $n\ge 2$ so that the case $n=2$ is allowed. 
Let $l>n+1$ be an even integer and let $f=\ds\sum_{n=1}^\infty 
n^{\frac{k-1}{2}}a_f(n) q^n $ be a normalized Hecke eigen cusp form of weight 
$k=\left\{\begin{array}{ll}
l-\frac{n}{2}+1 & \text{for $n$ even,}\\ 
2l-n+1  & \text{for $n$ odd}
\end{array}\right.
$
with respect to $\SL_2(\Z)$. Let $a_f(p)=\alpha_p+\alpha^{-1}_p,\ \alpha_p\in U(1)=\C^{1}$. 

Let $\psi:\Q\bs\A\lra \C^\times$ be the  standard additive character.  
Let $\pi_f$ be the irreducible unitary cuspidal representation of  $\SL_2(\A)$ (when $n$ even) 
or of the metaplectic double covering ${\rm Mp}_2(\A)$ of $\SL_2(\A)$ (when $n$ odd) attached to $f$ 
as explained in Section \ref{intro}.  
Let $V$ be the quadratic space associated to $J_{3,n+1}$. Note that $\disc(V)=-1$. 
Let $\chi_V=\otimes'_{p}\chi_{V,p}:\A^\times\lra \{\pm 1\}$ be the quadratic character associated to 
the quadratic extension $\Q(\sqrt{\det(V)})/\Q$. 
  
In this section, we discuss the theta lift $\theta(\pi_f)=\theta^\psi(\pi_f)$ from 
$\pi_f$ to $H(\A)={\rm O}(V)(\A)={\rm O}(3,n+1)(\A)$ and its restriction to $G(\A)$. We refer to \cite[Section 10]{Yamana} 
(resp. \cite{AG}) for the basic properties of the global (resp. local) theta correspondence in a  
general setting. 
Let  $\Pi_f$ be an irreducible component of $\theta(\pi_f)|_{G(\A)}$ is irreducible. 
Then, we will prove the following result:
\begin{prop}\label{thetaprop}Keep the notation as above. It holds that 
\begin{enumerate}
\item $\Pi_f$ is non-zero,  non-cuspidal, but square-integrable;
\item for each prime $p$, if $n$ is odd, then 
$\Pi_{f,p}\simeq {\rm Ind}^{G(\Q_p)}_{P(\Q_p)}\mu_p\circ \nu$, and  if $n$ is even, then 
$\Pi_{f,p}$  is a unique unramified summand of ${\rm Ind}^{G(\Q_p)}_{P(\Q_p)}\mu^2_p\circ \nu$;
\item twisting $\Pi_f$ by an outer automorphism if necessary, $\Pi_{f,\infty}$ can be a unique irreducible component of $\Pi^{3,n+1}_{l,0}|_{G(\R)}$ containing 
$\mathcal{H}^{2l}(\C^3)\boxtimes \mathbf{1}$ as a minimal $K_\infty$-type;
\item $\Pi_{f,\infty}$ is non-tempered, cohomological. 
\end{enumerate}
\end{prop}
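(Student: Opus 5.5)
We will use the Rallis theta tower for the dual pair $(\widetilde{\SL}_2 \text{ or } \SL_2,\ {\rm O}(V_m))$, where $V_m$ runs through the Witt tower containing $V$. The four assertions follow in order from its standard properties.

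\emph{Non-vanishing and cuspidality.} The Witt tower is
$$V_0\subset\cdots\subset V''\subset V'\subset V,$$
with $V_0$ anisotropic of dimension $n-2$. By the Rallis tower property, the constant term of $\theta_{V}(\pi_f)$ along the Siegel parabolic $P$ is the theta lift $\theta_{V'}(\pi_f)$ to ${\rm O}(V')={\rm O}(2,n)$. That lift is the (nonzero) Oda--Rallis--Schiffmann lift. To see it is nonzero, compute its Fourier--Jacobi or Whittaker-type coefficient: it equals an integral of a Fourier coefficient of $f$ (respectively of the Shimura lift $h$) against a theta series attached to the lattice $A$. Since $f$ (respectively $h$) has a nonzero coefficient, a suitable choice of Schwartz function makes this nonzero.

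Hence $\theta_{V}(\pi_f)$ is nonzero and non-cuspidal. It cannot be cuspidal, since cuspidal theta lifts occur only at the first occurrence in the tower, and $V$ is strictly above $V'$.

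\emph{Square-integrability.} We use the Rallis inner product formula in its regularized form (Kudla--Rallis, Yamana \S10), or equivalently Jacquet's criterion applied to the cuspidal support. The only nonzero constant terms come from $\theta_{V_m}(\pi_f)$ for $m$ below $V$, and their exponents are $|t|^{s_0}$ with $s_0$ governed by $\dim V-\dim V_m$. The requirement $l>n+1$ (large weight) makes these exponents satisfy Langlands' negativity criterion, so $\theta_V(\pi_f)$ lies in $L^2$. The restriction to $G(\A)$ decomposes into finitely many pieces, and we take $\Pi_f$ to be one of them. This proves (1).

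\emph{Local components, (2).} For $p$ finite, $\pi_{f,p}$ is an unramified principal series, and the local theta lift is computed by the doubling/induction principle of \cite{AG}: $\theta(\pi_p(\mu_p,\mu_p^{-1}))$ is a quotient of ${\rm Ind}_{P}^{{\rm O}(V)}\mu_p^\kappa\circ\nu$. The exponent is $\kappa=2$ in the $\SL_2$ case and $\kappa=1$ in the metaplectic case, because the Shimura--Waldspurger correspondence halves the parameter.

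When $n$ is odd, the degenerate principal series is irreducible, since $\mu_p$ is unitary and the reducibility points are real and nonunitary. When $n$ is even, it may split into a sum indexed by characters of the component group. The unramified lattice function $\Phi_p$ then picks out the unique spherical summand.

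\emph{Archimedean component, (3) and (4).} Here we invoke \cite{Kobayashi} and \cite[Proposition 2.1]{MS}. The theta lift of the holomorphic discrete series of weight $k$ (or of weight $l-\frac n2+1$ for $\widetilde{\SL}_2$) is $\Pi^{3,n+1}_{l,0}$. Restricting to $\SO(V)(\R)$ leaves at most two components, which are interchanged by an outer automorphism, so after twisting we may take $\Pi_{f,\infty}$ to be the one containing $\mathcal{H}^{2l}(\C^3)\boxtimes\mathbf 1$. Non-temperedness follows because $\Pi_{f,\infty}$ sits inside the degenerate series $I(l+1)$ with a real nonunitary exponent. Cohomologicality is part of \cite[Proposition 2.1]{MS}, since the infinitesimal character is integral and regular and the representation is an $A_\frak q(\lambda)$.

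The main obstacle is the square-integrability step. It needs the precise exponents of the constant terms along all three maximal parabolics (not just $P$), and a check that each is negative in the Langlands sense under $l>n+1$.
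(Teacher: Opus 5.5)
You handle non-vanishing and non-cuspidality the way the paper does: by the Rallis tower, the $P$-constant term is the lift to ${\rm O}(V')$, which is nonzero. Your route to (2) differs from the paper's and is a valid, shorter alternative. You use Kudla's induction principle, irreducibility of the unitary degenerate principal series for $n$ odd, complete reducibility for $n$ even, and unramifiedness of $\theta(\pi_p)$. The paper instead computes the $L$-parameter via Atobe--Gan and compares Langlands quotients of a common standard module. For your route you only need $\theta(\pi_p)$ to be an unramified subquotient of ${\rm Ind}^{G(\Q_p)}_{P(\Q_p)}\mu_p^\kappa\circ\nu$. There are, however, two genuine gaps.

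\emph{Square-integrability.} You attribute the negativity of the constant-term exponents to the hypothesis $l>n+1$, and propose to check it under that hypothesis. The weight plays no role. The exponents of constant terms of a global theta lift come from the Weil representation, so they depend only on $\dim V$, the position in the Witt tower, and the rank of $\overline{\SL}_2$. Along $P$, for instance, the constant term is $|a|$ (i.e.\ $|a|^{\dim W/2}$ with $\dim W=2$) times a lift to ${\rm O}(V')$. Against $\delta_P^{1/2}=|a|^{\frac{n+2}{2}}$ this gives the exponent $-\frac n2$, consistent with the pole at $s=\frac n2$ in Section \ref{residual}; the situation along $Q$ is similar. Negativity therefore comes from $\dim V$ being large relative to the rank-one symplectic side. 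This is exactly the dimension condition $m+j+j_0\ge 5>\rho_2=3$ in Yamana's Proposition 10.1(4), which the paper invokes. It is also why Lemma \ref{nonvan} holds for every cuspidal $\pi$, with no weight assumption. Note too that the Rallis inner product formula is not ``equivalent'' to Jacquet's criterion; the latter is the relevant tool. As written, this step is both unexecuted and aimed at the wrong condition.

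\emph{Non-temperedness.} Being a subrepresentation of $I(l+1)$, a degenerate principal series with real non-unitary exponent, does not imply non-temperedness. By Casselman's subrepresentation theorem, discrete series routinely embed in principal series with non-unitary real exponents. For example, the weight-$k$ holomorphic discrete series of $\SL_2(\R)$ sits in a principal series induced from $\sgn^k|\cdot|^{\pm(k-1)}$ for the appropriate sign. Moreover, the inducing datum here, the trivial representation of $\SO(2,n)$, is not tempered. So $I(l+1)$ is not a standard module, and Langlands data cannot be read off from it.

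The paper instead uses Kobayashi's identification $\Pi^{3,n+1}_{l,0}\simeq A_{\mathfrak q}(l-n-1)$, whose Levi corresponds to $\SO(2)\times{\rm O}(1,n+1)$. It checks $[\mathfrak l,\mathfrak l]\not\subset\mathfrak k$ and applies the Vogan--Zuckerman criterion. You need this, or an explicit Langlands-quotient description with nonzero positive exponent. Relatedly, you assert rather than derive that $\theta(\pi_f)_\infty\simeq\Pi^{3,n+1}_{l,0}$; the paper takes this from Li's Theorem 5.8.
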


We redefine $\Pi_f$ as a unique irreducible component of $\theta(\pi_f)|_{G(\A)}$ such that 
$\Pi_{f,\infty}$ satisfies the third condition of Proposition \ref{thetaprop}. 


\subsection{Non-cuspidality} 
We write $V=\mathbb{H}^3\oplus U$ where $\mathbb{H}$ is the hyperbolic plane and 
$U$ is the quadratic space corresponding to $-A$. 
Let 
$$\overline{\SL}_2(\A)=\left\{\begin{array}{ll}
\SL_2(\A) & \text{for $n$ even,}\\ 
{\rm Mp}_2(\A)  & \text{for $n$ odd.}
\end{array}\right.
$$
When $\overline{\SL}_2(\A)={\rm Mp}_2(\A)$ (hence, when $n$ odd),  
we consider only genuine representations so that they do not factor through $\SL_2(\A)$. 
\begin{lemma}\label{nonvan}
Assume $n\ge 2$. 
Let $\pi$ be an irreducible cuspidal representation of $\overline{\SL}_2(\A)$. Then, 
the theta lift $\theta(\pi)$ of $\pi$ to $H(\A)={\rm O}(3,n+1)$ is nonzero and non-cuspidal, but 
square-integrable. 
\end{lemma}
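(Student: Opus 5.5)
The plan is to use Rallis' tower principle for the Witt tower $V_r=\mathbb{H}^r\oplus U$, $r=0,1,2,\dots$, where $U$ is the anisotropic space attached to $-A$ and $V=V_3$. Write $\theta_r(\pi)$ for the global theta lift of $\pi$ to ${\rm O}(V_r)(\A)$. Rallis' theorem gives three facts. The constant term of $\theta_r(\pi)$ along the maximal parabolic stabilizing an isotropic line equals $\theta_{r-1}(\pi)$, up to the usual identification. The first nonzero lift $\theta_{r_0}(\pi)$, where $r_0$ is the first occurrence index, is cuspidal. Every $\theta_r(\pi)$ with $r\ge r_0$ is nonzero. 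So it suffices to show $r_0\le 2$, i.e. $\theta_2(\pi)\neq 0$ on ${\rm O}(2,n)$. Then $\theta_3(\pi)$ is nonzero, and its constant term along $P$ equals $\theta_2(\pi)\neq0$, so $\theta_3(\pi)$ is not cuspidal.

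To get $r_0\le 2$ I would use the Rallis inner product formula (regularized, as in \cite[Section 10]{Yamana}) for the lift to ${\rm O}(V_2)$, where $\dim V_2=n+2\ge 4$. The Petersson norm of $\theta_2(\varphi)$ is a special value of the (metaplectic, when $n$ odd) standard $L$-function of $\pi$, times local zeta integrals. Here that $L$-function is $L(s,{\rm Sym}^2\pi_f)$ or $L(s,\pi_f)$, evaluated at a point to the right of the critical line, so it is nonzero. The finite local zeta integrals are nonzero by \cite{AG}, since local theta lifts to the split ${\rm O}(2,n)(\Q_p)$ of unramified representations are nonzero. At infinity, the nonvanishing comes from the holomorphic discrete series of weight $k$ lifting to a holomorphic form on ${\rm O}(2,n)$, which is exactly the Oda--Rallis--Schiffmann lift \cite{Od, RS}. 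Alternatively, since $\pi$ is holomorphic of level one, I can skip the inner product formula and cite directly that the Oda/Rallis--Schiffmann lift of $f$ to $\SO(2,n)$ is nonzero.

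For square integrability, since $\theta_3(\pi)$ is not cuspidal I would use Rallis' square-integrability criterion (Weil's convergence criterion together with its refinement in \cite{Yamana}). Write $m=\dim V=n+4$. The theta integral converges absolutely and the lift lies in $L^2$ once the cuspidal exponents are negative. By the constant term computation, the only cuspidal data of $\theta_3(\pi)$ along $P$ are $|\nu|^{s_0}\otimes\theta_2(\pi)$, with exponent $s_0$ determined by $m$ and the rank of the symplectic group, roughly $s_0=\frac{m}{2}-2$. Possible further cuspidal data come from $\theta_1,\theta_0$ along smaller parabolics; these vanish if $\theta_1(\pi)=0$, and otherwise they are handled the same way. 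Since $m>4$, which uses $n\ge2$, Langlands' criterion (the real parts of the exponents lie in the negative obtuse cone after the appropriate normalization) applies. I would check this by writing the constant term as the sum of $\theta_2$ times $|\nu|^{s_0}$ and comparing with $\delta_P^{1/2}=|t|^{(n+2)/2}$.

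The hardest step will be this exponent bookkeeping for square integrability, especially when $\theta_1(\pi)\neq0$, which can happen when $n$ is small and $V_1$ has dimension $n$. In that case the constant terms along the other maximal parabolics must be controlled too. My first attempt would be to cite the general result in \cite[Section 10]{Yamana}, which gives square integrability of theta lifts in the stable range. I would then verify only the single inequality $m-2\cdot 2>2$ (respectively $>1$ for the metaplectic case), which holds for all $n\ge 2$.
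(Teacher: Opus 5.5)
Your architecture is the paper's. The tower property reduces nonvanishing and non-cuspidality of $\theta(\pi)$ on ${\rm O}(V_3)$ to $\theta_2(\pi)\neq 0$ on ${\rm O}(2,n)$, and square-integrability is taken from \cite[Section 10]{Yamana}. The gap is that your proof of $\theta_2(\pi)\neq 0$ only works for $\pi=\pi_f$. It uses that $\pi_\infty$ is a holomorphic discrete series, that every $\pi_p$ is unramified, and that the relevant $L$-function is $L(s,{\rm Sym}^2\pi_f)$ or $L(s,\pi_f)$; the alternative route uses the Oda--Rallis--Schiffmann lift of a level-one holomorphic $f$. That suffices for the application in Proposition \ref{thetaprop}, but not for the lemma as stated. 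The lemma is for an arbitrary irreducible cuspidal $\pi$ of $\overline{\SL}_2(\A)$ (e.g.\ one coming from a Maass form), where none of these inputs is available. The missing observation is that $V_2=\mathbb{H}^2\oplus U$ already has Witt index $2=\dim W$. So the lifts to ${\rm O}(V_2)$ and ${\rm O}(V_3)$ lie in Rallis' stable range, where the global theta lift of every cuspidal representation is nonzero. This is \cite[Proposition 10.1(3)]{Yamana}, which the paper applies with $j=2$ and $j=3$, and it makes the inner product formula unnecessary.

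For square-integrability, the inequality you plan to check is not the right criterion. Yamana's criterion is not a stable-range statement; it depends on the first occurrence index $r_0$. You already have the needed input. Up to unitary characters, the cuspidal support of $\theta_3(\pi)$ is $|a_1|\cdots|a_k|\otimes\theta_{r_0}(\pi)$ on the Levi $\GL_1^k\times{\rm O}(V_{r_0})$, where $k=3-r_0$. The corresponding $\rho$-coordinates are $\frac m2-i$ for $1\le i\le k$. Langlands' criterion then requires $\sum_{i\le j}(1-\frac m2+i)=\frac j2(3+j-m)<0$ for $1\le j\le k$, i.e.\ $m>6-r_0$, i.e.\ $n+r_0>2$. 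This is the condition $\dim U+j+j_0>\rho_2=3$ of \cite[Proposition 10.1(4)]{Yamana} that the paper invokes. It holds for every $n\ge 3$. For $n=2$ it holds only because $U=0$ forces $r_0\ge 1$: a cusp form pairs to zero against the constant theta function of ${\rm O}(0)$. Your inequality $m-2\cdot 2>2$, with $m=n+4$, reads $n>2$. So it does not hold for all $n\ge 2$, and as proposed the case $n=2$ is left unproved.
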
   
\begin{proof}Applying \cite[p.712, Proposition 10.1(3)]{Yamana} for 
$j=3,\ n=2$, and $U[j]=V$ in the notation therein, we first see that $\theta(\pi)$ is nonzero. 
By Rallis tower property (see the proof of \cite[p.712, Proposition 10.1]{Yamana}), 
the cuspidality of $\theta(\pi)$ is equivalent to the vanishing of the theta lift to ${\rm O}(2,n)={\rm O}(\mathbb{H}^2\oplus U)$. However,  
applying \cite[p.712, Proposition 10.1(3)]{Yamana} again for 
$j=2,\ n=2$, and $U[j]=\mathbb{H}^2\oplus U$ in the notation therein, we see that $\theta(\pi)$ is nonzero. 
Thus, $\theta(\pi)$ is non-cuspidal. 

Finally, applying \cite[p.712, Proposition 10.1(4)]{Yamana} for 
$m=n-2,\ j=3$ with our $n\ge 2$, and $\rho_2=3,\ j_0\ge 2$ in the notation therein 
(notice that $m+j+j_0\ge m+j+2=n+j\ge 5>\rho_2=3$), we see that 
$\theta(\pi)$ is square integrable.     
\end{proof}

\begin{remark} Lemma \ref{nonvan} is just a direct consequence of the well-known facts 
(cf. \cite[p.712, Proposition 10.1]{Yamana}) and it is also true for more general setting as explained  therein. 
\end{remark}

The first claim of Proposition \ref{thetaprop} follows from Lemma \ref{nonvan} since $\OO(V)=\SO(V)\rtimes \{\pm1\}$ as an algebraic group over $\Q$. 

\subsection{The local L-parameter at non-archimedean place when $n$ even}  
By assumption, $G$ splits at any finite places. Thus, $G(\Q_p)\simeq 
\SO(\frac{n}{2}+2,\frac{n}{2}+2)$. 
Let $\pi=\pi_\infty\otimes\otimes'_{p}\pi_p$ be a unique generic irreducible unitary cuspidal representation of $\overline{\SL}_2(\A)=\SL_2(\A)$ 
attached to $f$. For each finite place $p$, we denote by $\phi({\rm Sym}^2(\pi_p)):L_{\Q_p}:=W_{\Q_p}
\times \SL_2(\C)\lra  \SO(3)(\C)$ the $L$-parameter of ${\rm Sym}^2(\pi_p)$ where $W_{\Q_p}$ 
stands for the Weil group of $\Q_p$. Explicitly, 
it is given by 
$$\phi({\rm Sym}^2(\pi_p))({\rm Frob}_p,g)=\diag(\alpha^2_p,1,\alpha^{-2}_p),\ g\in \SL_2(\C)
$$ 
up to 
conjugation by the action of the Weyl group of $\SO(3)(\C)$. 

For each prime $p$, $\pi_p$ is an unramified principal series and 
$m^{{\rm down}}(\pi_p)=4$ with $\kappa=1,\ l(\pi_p)=-1$ by \cite[Theorem 4.1]{AG} in 
the notation therein. Applying \cite[Theorem 4.3(2)(4)]{AG} with $m_1=4$ and $\epsilon_0=1$, the local L-parameter of 
$\theta(\pi_p)\otimes \chi^{-1}_{V}$ is given by 
\begin{equation}\label{localpara}
(\phi({\rm Sym}^2(\pi_p))\oplus \mathbf{1})\oplus 
\bigoplus_{i=1}^{\frac{n}{2}}(|\cdot|^{\frac{n+2}{2}-i}_p\oplus 
|\cdot|^{-\frac{n+2}{2}+i}_p)
\end{equation}
where $\chi_V:\Q^\times_p\lra \C^\times$ is the quadratic character associated to 
$\Q_p(\sqrt{\disc(V)})/\Q_p$ (note that $\disc(V)=-1$ by definition). 
See also \cite[p. 4789, (4.2)]{CZ}. 

Since we have considered the restriction to $G(\A)$, the effect of $\chi_V$ disappears. 
Thus, the local L-parameter of $\Pi_{f,p}$ is given by (\ref{localpara}).  

By  \cite[Proposition 5.6]{AG} (applying it with $n=2,m=4,\epsilon=1$, 
and $r=n$), we see that $\Pi_{f,p}$ is the unique irreducible quotient of 
the standard module
$${\rm Ind}^{G(\Q_p)}_{R(\Q_p)}(|\cdot|^{\frac{n}{2}}\otimes 
|\cdot|^{\frac{n}{2}-1}\otimes \cdots\otimes |\cdot|^{1}\rtimes \tau_p)$$
where  $R$ is a parabolic subgroup whose Levi subgroup is isomorphic to 
$GL^{\frac{n}{2}}_1\rtimes {\rm SO}(2,2)$ and $\tau_p$ is some irreducible (unramified) component of  $\theta_{V_4,W_2}(\pi_p)|_{\SO(2,2)}$. 
Here $\theta_{V_4,W_2}(\pi_p)$ is the local theta 
correspondence of $\pi_p$ to ${\rm O}(2,2)={\rm O}(V_4)$ whose L-parameter is given by 
$\phi({\rm Sym}^2(\pi_p))\oplus \mathbf{1}$ 
(see \cite[Theorem 4.3(3)]{AG}). 
Since $\tau_p$ is (isomorphic to) a quotient of ${\rm Ind}^{\SO(2,2)}_{B_{\SO(2,2)}}\mu^2_p\otimes \mathbf{1}$ where $B_{\SO(2,2)}$ is the 
standard Borel subgroup of $\SO(2,2)$, $\Pi_p$ is (isomorphic to) the quotient of 
$$X_p:={\rm Ind}^{G(\Q_p)}_{B(\Q_p)}(|\cdot|^{\frac{n}{2}}\otimes 
|\cdot|^{\frac{n}{2}-1}\otimes \cdots\otimes |\cdot|^{1}\otimes \mu^2_p\otimes \mathbf{1})$$
where $B(\Q_p)$ is the standard Borel  subgroup of $G(\Q_p)$. 

Let $H=\SO(\frac{n}{2}+1,\frac{n}{2}+1)\subset G$ with the standard Borel subgroup $B_H$. 
Since the trivial representation $\mathbf{1}_H$ of $H(\Q_p)$ is the Langlands quotient of 
${\rm Ind}^{H(\Q_p)}_{B_H(\Q_p)}(|\cdot|^{\frac{n}{2}}\otimes 
|\cdot|^{\frac{n}{2}-1}\otimes \cdots\otimes |\cdot|^{1}\otimes\mathbf{1})$, by induction of stages 
the representation ${\rm Ind}^{G(\Q_p)}_{P(\Q_p)}\mu^2_p\circ \nu$ is a unique quotient of 
$${\rm Ind}^{G(\Q_p)}_{P(\Q_p)}(\mu^2_p \boxtimes ({\rm Ind}^{H(\Q_p)}_{B_H(\Q_p)}(|\cdot|^{\frac{n}{2}}\otimes 
|\cdot|^{\frac{n}{2}-1}\otimes \cdots\otimes |\cdot|^{1}\otimes\mathbf{1})))=
{\rm Ind}^{G(\Q_p)}_{B(\Q_p)}(\mu^2_p\otimes |\cdot|^{\frac{n}{2}}\otimes 
|\cdot|^{\frac{n}{2}-1}\otimes \cdots\otimes |\cdot|^{1}\otimes \mathbf{1})\simeq X_p$$
where the latter isomorphism is given by the Weyl action. 
Thus, both $\Pi_p$ and ${\rm Ind}^{G(\Q_p)}_{P(\Q_p)} \mu_p^2\circ \nu$ are unramified, irreducible and arise 
as quotients of the same standard module. Therefore, they are isomorphic by the uniqueness of the Langlands quotient (cf. \cite{Konno}).

\subsection{The local L-parameter at non-archimedean place when $n$ odd}  
By assumption, $G$ splits at any finite places. Thus, $G(\Q_p)\simeq 
\SO(\frac{n+3}{2},\frac{n+3}{2}+1)$. 
Let $\pi=\pi_\infty\otimes\otimes'_{p}\pi_p$ be the irreducible unitary cuspidal representation of 
$\overline{\SL}_2(\A)={\rm Mp}_2(\A)$ attached to the Shimura-Waldspurger lift of $f$ via $PGL_2\simeq SO(1,2)$. 
For each finite place $p$, we denote by $\phi(\pi_p):L_{\Q_p}:=W_{\Q_p}
\times \SL_2(\C)\lra  \SL_2(\C)$ the $L$-parameter of $\phi_p$. Explicitly, 
it is given by $$\phi(\pi_p)({\rm Frob}_p,g)=\diag(\alpha_p,\alpha^{-1}_p),\ g\in \SL_2(\C)$$ up to 
conjugation by the action of the Weyl group of $\SL_2(\C)$. 

Let $V_3$ be the quadratic space attached to $\begin{pmatrix} 0&0&1\\0&1&0\\1&0&0\end{pmatrix}$ so that $\disc(V_3)=-1$. Let $\chi_{V_3}:\A^\times\lra \{\pm 1\}$ be the quadratic character 
associated to $V_3$ so that $\chi_{V_3}=\chi_V$. 
For each prime $p$, we see that $V\simeq V_3\oplus \mathbb{H}^{\frac{n+1}{2}}$ over $\Z_p$. 
For each prime $p$, $\pi_p$ is an unramified principal series and 
$m^{{\rm down}}(\pi_p)=3$ with $\kappa=2,\ l(\pi_p)=0$ by \cite[Theorem 4.1]{AG} in 
the notation therein. Applying \cite[Theorem 4.3(3)(4)]{AG} with $m_1=3$ and $\epsilon_0=1$, the L-parameter of 
$\theta(\pi)_p$ is given by 
$$\phi(\pi_p)\otimes \chi^{-1}_{V_3}\oplus 
\bigoplus_{i=1}^{\frac{n+1}{2}}(|\cdot|^{\frac{n+2}{2}-i}_p\oplus 
|\cdot|^{-\frac{n+2}{2}+i}_p)$$
which takes the values in ${\rm Sp}_{n+3}(\C)$. 
Since we have considered the restriction to $G(\A)$, the effect of $\chi_{V_3}$ disappears. 
Thus, the local L-parameter of $\Pi_{f,p}$ is given by 
$$\phi(\pi_p)\oplus 
\bigoplus_{i=1}^{\frac{n+1}{2}}(|\cdot|^{\frac{n+2}{2}-i}_p\oplus 
|\cdot|^{-\frac{n+2}{2}+i}_p).$$

By  \cite[Proposition 5.6]{AG} (applying it with $n=2,m=3,\epsilon=1$, 
and $r=n$), $\theta(\pi_p)$ is the unique irreducible quotient of 
the standard module
$${\rm Ind}^{G(\Q_p)}_{R(\Q_p)}(|\cdot|^{\frac{n}{2}}\otimes 
|\cdot|^{\frac{n}{2}-1}\otimes\cdots \otimes |\cdot|^{\frac{1}{2}}\rtimes \tau_p)$$
where  $R$ is a parabolic subgroup whose Levi subgroup is isomorphic to 
$GL^{\frac{n+1}{2}}_1\rtimes {\rm SO}(V_3)$ and $\tau_p$ is some irreducible  (unramified)  component of $\theta_{V_3,W_2}(\pi_p)|_{\SO(V_3)}$. Here, 
 $\theta_{V_3,W_2}(\pi_p)$ is the local theta 
correspondence of $\pi_p$ to ${\rm O}(V_3)$ whose L-parameter is given by $\phi(\pi_p)\otimes\chi^{-1}_{V_3,p}$ 
(in particular, the local L-parameter of $\tau_p$ is given by $\phi(\pi_p)$).  
Since $\tau_p$ is a quotient of ${\rm Ind}^{\SO(1,2)}_{B_{\SO(1,2)}}\mu_p$ where $B_{\SO(1,2)}$ is the 
standard Borel subgroup of $\SO(1,2)$, 
As in the previous case, it is easy to see that ${\rm Ind}^{G(\Q_p)}_{P(\Q_p)}\mu_p\circ \nu$ is also a quotient of the above standard module. 
Since both $\Pi_p$ and ${\rm Ind}^{G(\Q_p)}_{P(\Q_p)} \mu_p\circ \nu$ are unramified and arise 
as quotients of the same standard module. 

\subsection{The archimedean component}\label{check-infinite}
It follows from \cite[Section 2.2]{Kobayashi} that 
as a $(\frak g,K)$-module, $\Pi^{3,n+1}_{l,0}$ is identified with the derived 
functor module $A_{\frak q}(\lambda-\rho)=A_{\frak q}(l-n-1)$ with $\lambda=l-\frac{n}{2}$ and $\rho=\frac{n}{2}+1$ in 
the notation therein. Here $\frak q=\frak l_\C+\frak u$ is a $\theta$-stable parabolic subalgebra of $\frak g={\rm Lie}(G(\R))_\C$ with the Levi subalgebra $\frak l_C$ and the normalizer of $\frak u$ in $G(\R)$ is isomorphic to $\SO(2)\times {\rm O}(1,n+1)$. Thus, $\Pi^{3,n+1}_{l,0}$ is cohomological. 
Further, one can check $[\frak l_\C,\frak l_\C]\not\subset \frak k:={\rm Lie}(K_\C)$ by using 
the description of $\frak u$ given in \cite[p.7, line 8 from Section 2.2]{Kobayashi}. 
Thus, as explained in \cite[p.58, line -9]{VZ}, $A_{\frak q}(l-n-1)$ is non-tempered. 
Further, by \cite[p.209, Theorem 5.8]{Li}, $\theta(\pi_f)_\infty\simeq A_{\frak q}(l-n-1)$. 
Therefore, $\theta(\pi_f)_\infty\simeq \Pi^{3,n+1}_{l,0}$ is non-tempered and cohomological. 

As in the proof of \cite[p.180, Corollary 1.3]{Li}, it is easy to see that ${\rm dim}_\C(\frak u\cap \frak p)=n+1$ where 
$\frak g=\frak k+\frak p$ is the Cartan decomposition. 
Applying \cite[p.64, Theorem 3.3]{VZ} with $R={\rm dim}_\C(\frak u\cap \frak p)=n+1$, we have 
$H^{n+2}(\frak g,K,A_{\frak q}(\lambda-\rho))\neq 0$. 
Notice $\theta(\pi_f)_\infty\otimes E^\vee_l$ contains the trivial representation as a $(\frak g,K)$-module. Thus, 
$\theta(\pi_f)$ contributes to $H^{n+2}(\Gamma\bs X,\mathcal{E}_l)$ where 
$\G\bs X$ is the arithmetic variety associated to $\OO(V)$ with the level $\OO(V)(\widehat{\Z})$ and 
$\mathcal{E}_l$ is the local system corresponding to the algebraic representation $E_l={\rm Sym}^l(\C^{n+4})_{{\rm harm}}$ of 
$\OO(V)(\R)$. 
Here the subscript ``harm'' means the harmonic polynomials in ${\rm Sym}^l(\C^{n+4})$ with respect to the variables of $\C^{n+4}$. 

Since any irreducible component $\Pi_\infty$ of $\Pi^{3,n+1}_{l,0}|_{G(\R)}$ are transitive under the outer automorphism of $\OO(V)$, 
obviously $\Pi_\infty$ is non-tempered and cohomological.  
The claims (3),(4) of Proposition \ref{thetaprop} follow from that with Proposition \ref{inf-type}.

\subsection{The vanishing of Fourier coefficients at the rank one indices}\label{VanishingRankOne} 
We say an index $\eta\in V'(\Q)$ is of rank one if $\eta\neq 0$ and $q(\eta)=0$. 
It follows from \cite[Theorem 5.4]{MS} that there exists a non-zero automorphic form $F\in \theta(\pi_f)\subset 
L^2(\OO(V)(\Q)\bs \OO(V)(\A))$  such that 
$$F_\eta(g):=\int_{N(\Q)\bs N(\A)}F(ng)\overline{\psi((\eta,n))}dn\equiv 0$$
for any rank one index $\eta\in V'(\Q)$.
Since $\theta(\pi_f)$ is irreducible, the same is true for any automorphic form belonging to $\theta(\pi_f)$ and for its restriction to $G(\A)$.

\section{Degenerate Whittaker functions}\label{DWF}
In this section we follow the formulation in \cite[Section 3]{IY} or  \cite[Section 3]{KY1}. 

\subsection{Degenerate principal series: non-archimedean case}\label{DWFnonarc} 
Let $p$ be a prime number. Let $K:=G(\Z_p)$ which is the maximal open compact subgroup of $G(\Q_p)$. 
For each quasi-character $\mu:\Q^\times_p\lra \C^\times$, we consider the 
degenerate principal series representation $I(\mu):={\rm Ind}^{G(\Q_p)}_{P(\Q_p)}\mu|\cdot|^s \circ \nu$. 
For each $z\in \C$ and $f\in I(\mu)$, we define $f^{(z)}\in I(\mu |\cdot |^z)$ by 
$$f^{(z)}(g)=|t|^z f(k),\ g=\diag(t,m,t^{-1})n(x) k\in G(\Q_p)=M(\Q_p)N(\Q_p)K.$$ 
Let $\psi:\Q_p\lra \C^\times$ be the standard additive character. For each $\eta\in V'(\Q_p)$ with $q(\eta)\neq 0$, 
let $\psi_\eta:N(\Q_p)\lra \C^\times,\ n(x)\mapsto \psi((\eta,n(x)))$. 
For each $f^{(z)}\in I(\mu |\cdot |^z)$, we define the Jacquet integral 
$$\bw^{\mu,z}_\eta(f^{(z)}):=\ds\int_{N(\Q_p)}f^{(z)}(w_0 n(x))\overline{\psi(\eta,x)}dx$$
where $w_0$ is the Weyl element defined in Section \ref{pre}. If $f^{(z)}$ is fixed by $\diag(1,-I_{n+2},1)\in K$, 
by change of variables ($x\mapsto -x$), 
$\bw^{\mu,z}_\eta(f^{(z)})=\ds\int_{N(\Q_p)}f^{(z)}(w_0 n(x))\psi(\eta,x)dx$ which matches with the setting in Section \ref{AFSO}. 
If we write $\mu=\mu'|\cdot |^{s(\mu)}$ uniquely with $\mu'$ unitary and $s(\mu)\in \R$, then 
$\bw^{\mu,z}_\eta(f^{(z)})$ is absolutely convergent in $z$ satisfying ${\rm Re}(z)+s(\mu)>\frac{n+2}{2}$. The computation in the previous section shows 
it is a polynomial in $p^{-z}$. Therefore, we can evaluate $\bw^{\mu}_\eta(f^{(z)})$ at $z=0$ and put 
$\bw^\mu_\eta:=\bw^{\mu,0}_\eta$. According to \cite[theorems in Section 4]{KY2}, 
for $s(\mu)>-\frac{1}{2}$ if $n$ is odd, and for $s(\mu)>-1$ if $n$ is even,  
we normalize $\bw^\mu_\eta$ by  
$$w^\mu_\eta(f)=|q(\eta)|^{\frac{n+2}{4}}\bw^\mu_\eta(f)\times 
\begin{cases}
L(1,\mu) & \text{if $n$ is even},\\
\ds\frac{L(1,\mu^2)}{L(\frac{1}{2},\chi_\eta \mu)} & \text{if $n$ is odd}
\end{cases}.$$
Under the above assumption on $s(\mu)$, if $\mu$ is unramified, by \cite[theorems in Section 4]{KY2}, we have 
\begin{equation}\label{recoverQ}
\bw^\mu_\eta(f_p)=|q(\eta)|^{\frac{n+2}{4}}Q_{\eta,p}(\mu(p))
\end{equation}
for the unramified section $f_p \in I(\mu)$ defined as in \cite[Section 4]{KY2} or \cite[Section 4]{Po}.   
 
We summarize the basic properties of $I(\mu)$ and $w^\mu_\eta$. 
They can be proved in a similar way to those in \cite[Section 3]{IY} and \cite[Section 3]{KY1}. 
\begin{prop}\label{propertiesIW} Let $\eta\in V'(\Q_p)$ with $q(\eta)\neq 0$. Let $\mu:\Q_p^\times\lra \C^\times$ be a continuous character. Then, the following holds:
\begin{enumerate}
\item If $\mu$ is unramified, $I(\mu)$ admits a unique spherical component 
of which is denoted by $A(\mu)$.  
\item \cite[Theorem 4.1]{Jantzen} Assume $n$ is odd. Then, $I(\mu)$ is reducible if and only if 
\begin{enumerate}
\item $s=\frac{n+2}{2}, \frac{1}{2}$ and $\mu=1$, or
\item $s=\frac{1}{2},\ \mu^2=1$ and  $\mu\neq 1$.
\end{enumerate}
\item \cite[Theorem 5.5]{BanJ} Assume $n$ is even. Then, $I(\mu)$ is reducible if and only if 
\begin{enumerate}
\item $s=\frac{n+2}{2}, 1$ and $\mu=1$, or
\item $s=0,\ \mu^2=1$ and  $\mu\neq 1$.
\end{enumerate}
\item The Whittaker space ${\rm Wh}_\eta(I(\mu)):={\rm Hom}_{N(\Q_p)}(I(\mu),\psi_\eta)$ is of one-dimensional. 
\item Assume that $s(\mu)>-1$ if $n$ is even, and $s(\mu)>-\frac{1}{2}$ if $n$ is odd. Then $w^\mu_\eta$ is a generator of ${\rm Wh}_\eta(I(\mu))$ and 
it satisfies
 $$w^\mu_\eta(\diag(t,m,t^{-1})n(x)\cdot f)=\psi_{t\cdot\eta\cdot {}^t m^{-1}}(x)\mu(t)^{-1}w^\mu_{t \cdot\eta\cdot {}^t m^{-1}}(f)$$
 for $f\in I(\mu)$,\ $\diag(t,m,t^{-1})n(x)\in P(\Q_p)=M(\Q_p)N(\Q_p)$. 
 \item Assume that $\mu$ is an unramified character and that $s(\mu)>-1$ if $n$ is even, and $s(\mu)>-\frac{1}{2}$ if $n$ is odd. Then, the restriction of  
 $w^\mu_\eta$ to $A(\mu)$ is non-zero. In particular, it gives a generator of ${\rm Wh}_\eta(A(\mu))$. 
\end{enumerate}
\end{prop}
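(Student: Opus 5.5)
We treat the six items of Proposition~\ref{propertiesIW} separately, following the pattern of \cite[Section 3]{IY} and \cite[Section 3]{KY1}.

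\textbf{Items (1)--(3).} Item (1) is standard. Since $G(\Q_p)=P(\Q_p)K$, the space $I(\mu)^K$ is one-dimensional, spanned by the normalized spherical section $f_p$. Hence exactly one irreducible subquotient $A(\mu)$ contains a nonzero $K$-fixed vector, namely the one containing $f_p$. Items (2) and (3) are quoted from \cite[Theorem 4.1]{Jantzen} and \cite[Theorem 5.5]{BanJ}. The only thing to check is that our normalization $\mu|\cdot|^s\circ\nu$, with $\delta_P=|t|^{n+2}$, matches theirs, so that the reducibility point $s=\frac{n+2}{2}$ is where the trivial representation appears.

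\textbf{Item (4).} We use Bruhat theory for $P(\Q_p)\bs G(\Q_p)/P(\Q_p)$, which has three double cosets: $P$, $Pw_1P$ (the middle one) and $Pw_0N$ (the open one). By Bernstein--Zelevinsky/Mackey theory, a $\psi_\eta$-equivariant functional restricted to each stratum is controlled by a twisted Jacquet module.
\begin{itemize}
\item The closed cell contributes nothing, since $\psi_\eta$ is nontrivial on $N$ while the representation of $P$ there is $N$-trivial.
\item The open cell contributes exactly one dimension, realized by the Jacquet integral.
\item The middle cell is the main obstacle. There $N$ meets the stabilizer in a subgroup that fixes an isotropic line in $V'$. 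Because $q(\eta)\neq0$, $\psi_\eta$ is nontrivial on that intersection: an anisotropic $\eta$ cannot be orthogonal to all the relevant isotropic directions. Hence this cell contributes nothing.
\end{itemize}
This gives $\dim{\rm Wh}_\eta(I(\mu))\le1$, and equality follows from the nonvanishing in (5).

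\textbf{Item (5).} For ${\rm Re}(z)+s(\mu)>\frac{n+2}{2}$, the integral $\bw^{\mu,z}_\eta$ converges. By the computation of \cite[Section 4]{KY2}, it is a polynomial in $p^{-z}$ on each $K$-type, which gives analytic continuation to $z=0$. The normalizing factors are chosen to cancel the poles and zeros: $L(1,\mu)$ for $n$ even, and $L(1,\mu^2)/L(\frac12,\chi_\eta\mu)$ for $n$ odd. The condition $s(\mu)>-1$ (resp.\ $>-\frac12$) ensures these factors are finite.

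The equivariance formula follows by substitution. Write
\[
w_0\,\diag(t,m,t^{-1})\,n(x)=\diag(t^{-1},m,t)\,w_0\,n(x),
\]
using that $w_0$ exchanges $e$ and $f$, and note that $\diag(t,m,t^{-1})$ acts on $N$ by $x\mapsto t\,x\cdot{}^tm$. Changing variables produces the factor $\mu(t)^{-1}$ together with the Jacobian. These combine with $|q(\eta)|^{\frac{n+2}{4}}$, because $q(t\eta\,{}^tm^{-1})=t^2q(\eta)$, and the index transforms as stated.

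Nonvanishing of $w^\mu_\eta$ follows from (6), or from choosing $f$ supported in $Pw_0N$ with small $N$-support.

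\textbf{Item (6).} We evaluate on the spherical vector. By (\ref{recoverQ}), $w^\mu_\eta(f_p)$ equals the normalized Siegel-series polynomial $\widetilde Q_{\eta,p}(\mu(p))$ up to a nonzero constant. Using (5) and $K$-invariance, we may move $\eta$ within its $M(\Q_p)$-orbit to a representative with $q(\eta)$ of minimal valuation. For such $\eta$, the explicit formulas of \cite[Theorems 4.2--4.4]{KY2} give $\widetilde Q_{\eta,p}\equiv1$, which is nonzero.

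Since $f_p$ generates the subrepresentation containing $A(\mu)$, and (4) gives uniqueness, $w^\mu_\eta|_{A(\mu)}\neq0$. If $A(\mu)$ is only a subquotient, we argue instead via the intertwining operator realizing $A(\mu)$ as a quotient.
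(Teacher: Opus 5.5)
Items (1)--(5) are fine, and in places more self-contained than the paper's proof. The paper cites \cite[Theorem 3.2]{Karel} for (4) and deduces (5) from the earlier items. Your Bruhat-cell analysis is essentially the content of that citation, and getting $w^\mu_\eta\neq0$ from a section supported in $Pw_0N$ with small $N$-support is cleaner than appealing to (6). Two precisions:
\begin{enumerate}
\item In the middle cell, the reason is that $N\cap{\rm Stab}(\langle e'\rangle)=\{n(x):(x,e')=0\}$ fixes $e'$, so it acts trivially on the inducing data. Moreover $\psi_\eta$ is trivial on it iff $\eta\in\Q_pe'$, which forces $q(\eta)=0$. The point is not that $\eta$ avoids isotropic vectors: $\eta^\perp$ is isotropic once $n\ge4$.
\item \cite{KY2} only treats the spherical vector, so continuing $\bw^{\mu,z}_\eta$ to $z=0$ on all of $I(\mu)$ needs the general argument (e.g.\ Karel's).
\end{enumerate}

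The genuine gap is in (6). As you yourself use in (5), $q(t\cdot\eta\cdot{}^tm^{-1})=t^2q(\eta)$, so the parity of $v_p(q(\eta))$ is constant on $M(\Q_p)$-orbits. The paper's proof hides this by asserting that $M(\Q_p)$ is transitive on $\{q\neq0\}$, which is false for the same reason. For odd parity you can only reach a primitive $\eta$ with $v_p(q(\eta))=1$, and there $\widetilde Q_{\eta,p}\not\equiv1$. For $n$ even it is $X+X^{-1}$ with $X^2=\mu(p)$; this is exactly what produces $a_f(p)$ in Theorem \ref{ITC} when $q(\eta)=p$.

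This cannot be repaired. For $n$ even and $p$ odd, a Ramanujan-sum evaluation of the Jacquet integral on $f_p$, with $e=v_p(q(\eta))$ and content $p^c$, gives a nonzero multiple (for $s(\mu)>-1$) of $\sum_{j=0}^{c}(\mu(p)p^{\frac n2})^j\sum_{i=0}^{e-2j}\mu(p)^i$. This vanishes for every odd $e$ when $\mu(p)=-1$. Since $G(\Q_p)=P(\Q_p)K$, the function $g\mapsto w^\mu_\eta(g\cdot f_p)$ is governed by the values $w^\mu_{\eta'}(f_p)$ for $\eta'$ in the orbit of $\eta$. So at the unitary reducibility point (3)(b), $w^\mu_\eta$ kills the spherical summand $A(\mu)$ for all $\eta$ with $v_p(q(\eta))$ odd, and (6) is false there as stated.

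Your last sentence also cannot work. For $\mu=|\cdot|^{\frac{n+2}{2}}$, which is allowed by $s(\mu)>-1$, $A(\mu)$ is the trivial quotient of $I(\mu)$, and it has no nonzero $\psi_\eta$-functional.

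What your method does prove:
\begin{enumerate}
\item (6) whenever $I(\mu)$ is irreducible, directly from your open-cell argument;
\item $w^\mu_\eta(f_p)\neq0$ when $v_p(q(\eta))$ is even, using Witt plus scaling to reach a primitive $\eta$ with $q(\eta)\in\Z_p^\times$, where the polynomial is $1$;
\item $w^\mu_\eta(f_p)\neq0$ when $v_p(q(\eta))$ is odd and $\mu(p)\neq-1$.
\end{enumerate}
In the application, $\mu=\mu_p^2$ and $\mu(p)=\alpha_p^2$, so the exceptional case is exactly $a_f(p)=0$. This is consistent with the vanishing of the corresponding coefficients in Theorem \ref{ITC}.
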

\begin{proof}The first claim follows from ${\rm dim}I(\mu)^K=1$. 
The second claim follows from \cite[Theorem 4.1, page 32,]{Jantzen} for odd $n$. 
The third claim follows from \cite[Proposition 5.1, p. 467, Theorem 5.5, p.475]{BanJ} for 
even $n$. 
The fourth claim follows from \cite[Theorem 3.2]{Karel}.  
 The fifth claim follows from the previous claims. The transformation law is easy to check as in the proof of \cite[Lemma 3.4(2)]{KY1}. 

For the final claim, we first note that $M(\Q_p)$ acts transitively on the set $\{\eta\in V'(\Q_p)\ |\ q(\eta)\neq 0\}$. 
By the transformation law, we may check the claim for a specific $\eta$ and a spherical section similarly defined as in \cite[Section 4]{KY2}. Then, explicit examples in \cite[Section 4]{KY2} (the case $p=2$ is similarly handled) show  
the non-triviality of the Jacquet integrals.  
\end{proof}

\subsection{Degenerate principal series: archimedean case}\label{DWFarc} 
We keep the notation in Section \ref{AFSO}. Let $K$ be the maximal compact subgroup of $G(\R)$. 
Let $\psi=\psi_\infty:\R\lra \C^\times,\ x\mapsto e^{2\pi \sqrt{-1}x}$ be the standard additive character. 
Let $l\ge 0$ be the integer and $(\tau_l,\mathbb{V}_l={\rm Sym}^{2l}(\C^2))$ the algebraic representation of $K$.  
For $\eta\in V(\R)'$ with $q(\eta)> 0$, recall the Pollack's spherical function (\ref{PollackSpherical}):
$$\W^{(l)}_\eta(g)=q(\eta)^{\frac{l+1}{2}}\psi((\eta,x))\tau_l(k^{-1})\W_\eta(t,m),\ g=n(x)\diag(t,m,t^{-1})k\in G(\R)=N(\R)M(\R)K.$$
By direct computation, we see that 
$$\W^{(l)}_\eta(n(x')\diag(t',m,t'^{-1}) n(x)\diag(t,m,t^{-1}))=\psi((\eta,x'+t'\cdot x \cdot {}^t m'))\W_{t' \cdot\eta\cdot {}^t m'^{-1}}(t,m)$$
for  $n(x')\diag(t',m,t'^{-1})\in P(\R)=N(\R)M(\R)$.

\section{The Weil representation of the Jacobi group $J$}\label{weil-rep}
We refer to \cite{Ik94} for the basics of the Weil representation. 
Let $\psi:\Q\bs \A\lra \C^\times$ be the standard additive character (cf \cite[Section 4.2]{KY}).
For each $S\in \Q^\times$, let $\psi_S=\psi(S\ast )=\otimes'_{p}\psi_{S,p}$ where we also regard 
it with an additive character on $Z(\A)$ via the identity $Z(\A)\stackrel{\sim}{\lra} \A,\ v(0,0,z)\mapsto z$. 

Let ${\rm Mp}_2(\A)$ be the metaplectic covering of $H(\A)$. 
Let $\omega_{\psi_S}:=\otimes'\omega_{S,p}=
\omega_{S,\f}\otimes \omega_{S,\infty}:
\widetilde{J(\A)}:={\rm Mp}_2(\A)\ltimes U(\A)\lra {\rm Aut}_\C(\mathcal{S}(X(\A)))$ be the Weil representation 
associated to $\psi_S$ acting on the Schwartz space $\mathcal{S}(X(\A))$ for $X(\A)$. 
Explicitly, for each place $p\le \infty$ and $\Phi=\otimes'_{p\le\infty}\Phi_p\in \mathcal{S}(X(\A))$, it is given by, 
for $\ve\in\{\pm 1\}$, $v(x,y,z)\in U(\Q_p),\ t\in X(\Q_p),\ a\in \Q^\times_p$, and $b\in \Q_p$,   
\begin{enumerate}
\item $\omega_{S,p}(v(x,y,z))\Phi_p(t)=\Phi_p(t+x)
\psi_{S,p}\Big(z+\sigma(t,y)+\frac{1}{2}\sigma(x,y)\Big).$
\item 
$\omega_{S,p}((\ell(\begin{pmatrix}
 a & 0 \\
 0 & a^{-1}
 \end{pmatrix}),\ve))\Phi_p(t)=\ve^n\ds\frac{\gamma_p(1)}{\gamma_p(S a^n)}|a|^{\frac{n}{2}}\Phi_p(at)$,  
 \item 
$\omega_{S,p}(\ell(\begin{pmatrix}
 1 & b \\
 0 & 1
 \end{pmatrix}),\ve))\Phi_p(t)=\ve^n
\psi_{S,p}(\sigma(t,t)b)\Phi_p(t)  ,\ b\in \Q_p,\ t\in X(\Q_p)$. 
\item 
$
\omega_{S,p}(\ell(\left(\begin{array}{cc}
 0 & -1   \\ 
1 & 0 
\end{array}\right)),\ve))\Phi_p(t)=\ve^n(F_S\Phi_p)(-t),\ t\in X(\Q_p)$,  
$$(F_S\Phi_p)(t)=\ds\int_{X(\Q_p)} \Phi_p(x)\psi_{S,p}(\sigma(t,x))dx$$ 
\end{enumerate}
where   $\gamma_p(a)$ stands for the Weil constant  associated to $\psi_{S,p}$and $dx$ means the Haar measure on $X(\Q_p)$ which is self-dual with respect to the Fourier transform $F_S$.  

Recall that $X(\R)=V''(\R)$ is a quadratic space with signature $(p,q)=(1,n)$.
By direct computation, it is easy to see that the $\widetilde{\SO(2)}$-type of 
$\omega_{S,\infty}$ (for $S\in \R^\times$) is given by  
\begin{equation}\label{Ktype}
\sgn(S)\Big(\frac{p-q}{2}+m\Big)=\sgn(S)\Big(\frac{1-n}{2}+m\Big),\ m\in \Z_{\ge 0}. 
\end{equation}

For each $\Phi\in 
\mathcal{S}(X(\A))$, we define the theta function 
\begin{eqnarray}\label{theta}
\Theta_{\psi_S}(v(x,y,z)\widetilde{h};\Phi)&:=&\sum_{\xi\in X(\Q)}
\omega_{\psi_S}(v(x,y,z)\widetilde{h})\Phi(\xi),\ 
v(x,y,z)\in U(\Bbb A),\ \widetilde{h}\in \widetilde{H}(\Bbb A) \\
&=&\sum_{\xi\in X(\Q)}(\omega_{\psi_S}(\widetilde{h})\Phi)(x+\xi)\psi(S\sigma(\xi,y))
\psi\Big(S(z+\frac{1}{2}\sigma(x,y))\Big).\nonumber 
\end{eqnarray}

\section{Fourier-Jacobi expansions of the Eisenstein series along $J$: the general case}\label{FJES}
\label{generalcase}
In this section we will prove that any Fourier-Jacobi coefficient of the Eisenstein series 
are the Eisenstein series. We refer to \cite{Ik94} for basics of this topic. 

\subsection{Double coset decomposition}\label{dcd}

Since $n\geq 3$, the relative root system of $G$ over $\Q$ is of type $B_3$, and we denote the positive roots by
$\{e_1\pm e_2, e_1\pm e_3, e_2\pm e_3, e_1, e_2, e_3\}$, and let $\Delta=\{e_1-e_2, e_2-e_3, e_3\}$ be the set of simple roots, where $e_i\pm e_j$ has multiplicity one, and $e_i$ has multiplicity $8m$.

The double coset $P\backslash G/Q$ is bijective to the double coset of the Weyl group $W_P\backslash W_G/W_Q$. By \cite{Ca}, page 64,
each double coset of $W_P\backslash W_G/W_Q$ has unique element of minimal length, and they are
$\{1, (1 2 3), (1 2)c_2\}$, where $c_i$ is the Weyl group element attached to $e_i$, and $(i j)$ is the Weyl group element attached to $e_i-e_j$. Then $G=P\xi_2 Q\cup P\xi_1 Q\cup P\xi_0 Q$, and $P\xi_0 Q$ is the unique open cell, where $\xi_2=1$, $\xi_1=(1 2 3)$, and $\xi_0=(1 2)c_2$. 

In terms of group elements, we can take coset representatives as 
$$\xi_1=
\left(\begin{array}{ccccccc}
0 & 0 & 1 & 0& 0& 0 & 0  \\ 
1 & 0 & 0  & 0&0& 0 & 0  \\ 
0 & 1 & 0  & 0&0& 0 & 0  \\
0 & 0 &0& I_{n-2} &0  & 0 & 0  \\ 
0 & 0 & 0  &0 & 0 & 1 & 0 \\ 
0 & 0 & 0  &0 & 0 & 0 & 1 \\ 
0 & 0 &0 &0 & 1 & 0 & 0 
\end{array}\right),\quad
\xi_0=
\left(\begin{array}{ccccccc}
0 & 0 & 0 & 0& 0& 1 & 0  \\ 
1 & 0 & 0  & 0&0& 0 & 0  \\ 
0 & 0 & 1  & 0&0& 0 & 0  \\
0 & 0 &0& I_{n-2} &0  & 0 & 0  \\ 
0 & 0 & 0  &0 & 1 & 0 & 0 \\ 
0 & 0 & 0  &0 & 0 & 0 & 1 \\ 
0 & 1 &0 &0 & 0 & 0 & 0 
\end{array}\right).
$$
By direct computation, we have $\xi_2^{-1} P\xi_2\cap U=U$, 
$$\xi_1^{-1} P\xi_1\cap U=
\left\{\left(\begin{array}{ccccc}
1 & 0  & -\textbf{y}J_{1,n-1} & \ast & \ast \\
 0 & 1  &-\textbf{x}J_{1,n-1} & \ast &  \ast \\
  0 & 0 & 1_n & {}^t\textbf{x} & {}^t\textbf{y} \\
  0 & 0 & 0 &  1 & 1 \\
   0 & 0 &  0& 0& 1 \\
\end{array}
\right)\in U\ \Bigg|\ 
\begin{array}{c}
\textbf{x} =(0,x_2,\ldots,x_n),\\
 \textbf{y} =(0,y_2,\ldots,y_n)
 \end{array}
 \right\},
$$
and 
$$\xi_0^{-1} P\xi_0\cap U=
\left\{\left(\begin{array}{ccccc}
 1& \ast  & -\textbf{y}J_{1,n-1} &0 & \ast \\
 0 &1  &0& 0 &  0 \\
  0 & 0 & 1_n &0 & {}^t\textbf{y} \\
  0 & 0 & 0 &  1& \ast \\
   0 & 0 &  0& 0& 1 \\
\end{array}
\right)\in U\ \Bigg|\ \textbf{y}\in \mathbb{G}^n_a\right\}.
$$
As shown in \cite[Lemma 7.3]{KY}, we have 
\begin{equation}\label{ppxi0q}
P(\Q)\bs P(\Q)\xi_0 Q(\Q)=\xi_0 (Y(\Q)\bs U(\Q))\cdot 
(B_{H^{{\rm ss}}}(\Q)\bs H^{{\rm ss}}(\Q)) 
\end{equation}
where $ B_{H^{{\rm ss}}}$ is the Borel subgroup of $H^{{\rm ss}}$ corresponding to 
the upper Borel subgroup of $\SL_2$ via the map (\ref{mapl}). 

\subsection{Fourier-Jacobi expansions of Eisenstein series}
We identify $H$ with $SL_2$ via the map (\ref{mapl}). Let 
$B$ is the Borel subgroup of $SL_2$ which consists of upper-triangular matrices.
Let $\omega$ be a unitary character of $\Q^\times\backslash\A^\times$ and $s\in \C$. 
Let $\mathbb{K}=\SL_2(\widehat{\Z})\times \SO(2)$ be the standard maximal compact subgroup of $\SL_2(\A)$. Let $\widetilde{\mathbb{K}}$ be the pullback of $\mathbb{K}$ 
to ${\rm Mp}_2(\A)$ under the projection ${\rm Mp}_2(\A)\lra \SL_2(\A)$. 

We denote by $I(\omega,s)$, the degenerate principal series representation of $G(\A)$ consisting of any  
function $f:G(\A)\lra \C$ such that 
$$f(pg)=\delta^{\frac{1}{2}}_P(p)|\nu(p)|^s_\A\omega(\nu(p))f(g)$$
 for any  $p\in P(\A)$ and any  $g\in G(\A)$.  
We also define  the space $I_1(\omega,s)$ consisting of any smooth, $\mathbb{K}$-finite 
function $f:{\rm SL}_2(\A)\lra \C$ such that 
$$f(pg)=\delta^{\frac{1}{2}}_B(p)|a|^s_{\A}\omega(a)f(g)$$
 for any  $p=
\left(\begin{array}{cc}
a & b \\
0 & a^{-1} 
\end{array}
\right)\in B(\A)$ and any  $g\in {\rm SL}_2(\A)$. 
Here   
$\delta^{\frac{1}{2}}_B(p)=|a|_\A$ for $p=
\left(\begin{array}{cc}
a & b \\
0 & a^{-1} 
\end{array}
\right)\in B(\A)
$.    
Similarly we also define  the space $\widetilde{I}_1(\omega,s)$ consisting of any smooth, 
$\widetilde{\mathbb{K}}$-finite function $f:{\rm Mp}_2(\A)\lra \C$ such that 
\begin{equation}\label{ActionOfvarepsilon}
f(\widetilde{p} g)=\ve \frac{\gamma(1)}{\gamma(a)}\delta^{\frac{1}{2}}_B(p)|a|^s_{\A}\omega(a)f(g)
\end{equation}
 for any  $\widetilde{p}=(p,\ve)\in \widetilde{B(\A)}$ with $p=\left(\begin{array}{cc}
a & b \\
0 & a^{-1} 
\end{array}
\right)$ and any  $g\in {\rm Mp}_2(\A)$. 
Here $\gamma:=\prod_p \gamma_p$ is the global Weil constant (with respect to $\psi$) and $\ve$ in the RHS of (\ref{ActionOfvarepsilon}) is 
understood as multiplying $+1$ if $\ve \in \{(t_v)_v\in \bigoplus_v \{\pm 1\}\ |\ \prod_v t_v=1 \}$, $-1$ otherwise.

For any section $f\in I(\omega,s)$, we define the Eisenstein series on $G(\A)$ of type $(\omega,s)$ by 
$$E(g,s;f):=\sum_{P(\Q)\ba G(\Q)}f(\gamma g),\ g\in G(\A).$$
For each $S\in \Q^\times$, 
let 
$$E_S(g;f)=\int_{Z(\Q)\bs Z(\A)}E(zg,s;f)\overline{\psi_S(z)}dz,\ g\in G(\A).$$
For each $S\in \Q^\times$ and $\Phi\in \mathcal{S}(X(\A))$, put 
\begin{equation}\label{Fourier-Jacobi}
E_{\psi_S,\Phi}(\widetilde{h};f):=\int_{U(\Q)\backslash U(\A)} E_S(vh;f)
\overline{\Theta_{\psi_S}(v\widetilde{h};\Phi)}\, dv,\ \widetilde{h}=(h,\ve)\in {\rm Mp}_2(\A).
\end{equation}
When ${\rm dim}\hspace{0.2mm}X=n$ is even, ${\rm Mp}_2(\A)$ is canonically isomorphic to 
$\SL_2(\A)\times \{\pm 1\}$ and thus, we can define 
\begin{equation}\label{Fourier-Jacobi-even}
E_{\psi_S,\Phi}(h;f):=\int_{U(\Q)\backslash U(\A)} E_S(vh;f)
\overline{\Theta_{\psi_S}(vh;\Phi)}\, dv,\ h\in \SL_2(\A).
\end{equation}
Let $\iota_{\SL_2}=\left(\begin{array}{cc}
0 & -1 \\
1 &0
\end{array}
\right)$ be the Weyl element of $\SL_2$. 
The following theorem is proved by a standard way (see the proof of \cite[p.627, Theorem 3.2]{Ik94}) with the double coset decomposition in Section \ref{dcd}. Thus, we omit the proof.
\begin{theorem}\label{EisenEisen}Keep the notation above. 
Let  $\chi_S(\ast)=
\langle -S ,\ast   \rangle_\A$ is the character of $\Q^\times\bs\A^\times$ defined by the Hilbert symbol 
$\langle \ast ,\ast   \rangle_\A$,
Assume that 
$\Phi\in \mathcal{S}(X(\A))$ is $\widetilde{\mathbb{K}}$-finite.  
For ${\rm Re}(s)\gg 0$,   
\begin{enumerate}
\item when ${\rm dim}\hspace{0.2mm}X=n$ is even, 
$R(h;f,\Phi):=\ds\int_{\bold V(\A)}f(\xi_0 \cdot v\cdot \iota_{\SL_2}\cdot h)\overline{\omega_S(v(\iota_{SL_2}\cdot h))\Phi(0)}dv$ is a section of $I_1(\omega \chi_S,s)$, 
\item when ${\rm dim}\hspace{0.2mm}X=n$ is odd, 
$R(\widetilde{h};f,\Phi):=\ds\int_{\bold V(\A)}f(\xi_0 \cdot v\cdot \iota_{SL_2}\cdot 
\widetilde{h})\overline{\omega_S(v(\iota_{SL_2}\cdot \widetilde{h}))\Phi(0)}dv$ is a section of $\widetilde{I}_1(\omega \chi_S,s)$  where  
\item $E_{\psi_S,\Phi}(\ast;f)$ is an Eisenstein series on 
$\SL_2(\A)$ or ${\rm Mp}_2(\A)$ associated to $R(\ast;f,\Phi)$ 
according to ${\rm dim}\hspace{0.2mm}X=n$ is even or odd.  
\end{enumerate}
\end{theorem}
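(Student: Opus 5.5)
We follow the unfolding argument of \cite[Theorem 3.2]{Ik94}. For ${\rm Re}(s)\gg0$, substitute $E(g,s;f)=\sum_{P(\Q)\bs G(\Q)}f(\gamma g)$ into (\ref{Fourier-Jacobi}) (or (\ref{Fourier-Jacobi-even})). Then split the sum according to $G=P\xi_2Q\cup P\xi_1Q\cup P\xi_0Q$ from Section \ref{dcd}. Absolute convergence justifies interchanging the sum with the compact integrals over $Z(\Q)\bs Z(\A)$ and $U(\Q)\bs U(\A)$.

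We first show that the two small cells give nothing. For $\xi_2=1$, the group $\xi_2^{-1}P\xi_2\cap U=U$ contains $Z$, and $f(\gamma zg)=f(\gamma g)$ for $\gamma\in P(\Q)Q(\Q)$ with $\gamma z\gamma^{-1}\in N$. So the $Z$-integral against $\overline{\psi_S}$ vanishes because $S\neq0$.

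For $\xi_1$, the description of $\xi_1^{-1}P\xi_1\cap U$ shows that it again contains the center $Z$, and $\xi_1 Z\xi_1^{-1}\subset N$. Hence each term $f(\xi_1 q g)$ is left $Z(\A)$-invariant after conjugation by $q\in Q(\Q)$, which normalizes $Z$ up to the $\det\gamma$ scaling. The $Z$-integral therefore kills this cell as well. Only the open cell $P\xi_0Q$ survives.

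On the open cell we use (\ref{ppxi0q}):
$$P(\Q)\bs P(\Q)\xi_0Q(\Q)=\xi_0\,(Y(\Q)\bs U(\Q))\,(B_{H^{\rm ss}}(\Q)\bs H^{\rm ss}(\Q)).$$
This gives
$$E_{\psi_S,\Phi}(h;f)=\sum_{\gamma\in B(\Q)\bs \SL_2(\Q)}\int_{Y(\Q)\bs U(\A)}f(\xi_0 v\gamma h)\overline{\Theta_{\psi_S}(v\gamma h;\Phi)}\,dv ,$$
where we use the left $J(\Q)$-invariance of $\Theta$ to absorb $\gamma$. Next unfold the theta series $\sum_{\xi\in X(\Q)}$ against $Y(\Q)\bs U(\A)$: the $X(\Q)$-translates combine with the integral over $X(\Q)\bs X(\A)$, so the $X$-direction is integrated over all of $X(\A)$. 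Integrating over $Y(\Q)\bs Y(\A)$ picks out the $\xi=0$ term. The integral becomes
$$\int_{\bold V(\A)}f(\xi_0 v\gamma h)\overline{\omega_S(v\gamma h)\Phi(0)}\,dv ,$$
with $\bold V=X Z$ modulo the piece lying in $\xi_0^{-1}P\xi_0$. Rewriting the Bruhat representative $\gamma$ as $\iota_{\SL_2}\gamma'$ yields $\sum_{B(\Q)\bs\SL_2(\Q)}R(\gamma h;f,\Phi)$, which is statement (3).

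It remains to prove (1) and (2), namely that $R$ transforms correctly under $B(\A)$. For $p=\begin{pmatrix}a&b\\0&a^{-1}\end{pmatrix}$ we compute $R(ph)$. We conjugate $\iota p\iota^{-1}=\begin{pmatrix}a^{-1}&0\\-b&a\end{pmatrix}$ past $v$ and change variables in $\bold V(\A)$, which produces a Jacobian $|a|^{\ast}$. Using (\ref{elldiag}), $\xi_0\ell(\diag(a^{-1},a))\xi_0^{-1}$ lies in $M$, so $f$ contributes $|a|^{s+\frac{n+2}{2}}\omega(a)$ up to the sign of the exponent. The Weil representation formulas (2) and (3) contribute $|a|^{n/2}\gamma(1)/\gamma(Sa^n)$ together with a $\psi_S(\sigma(0,0)b)=1$ factor. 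Combining these, the powers should collapse to $|a|^{s+1}$. The ratio $\gamma(Sa)/\gamma(S)$ turns into $\chi_S(a)$ when $n$ is even, and into $\gamma(1)/\gamma(a)\cdot\chi_S(a)$ when $n$ is odd, via the standard identity $\gamma(Sa)\gamma(1)=\gamma(S)\gamma(a)\langle S,a\rangle$. The main obstacle is this bookkeeping: tracking the exact modulus exponents and Weil-constant cocycles, and checking that $\xi_0$ conjugates the relevant torus into $M$ with the correct orientation. I would verify it with the explicit matrix for $\xi_0$. $\widetilde{\mathbb K}$-finiteness of $R$ follows from that of $f$ and $\Phi$.
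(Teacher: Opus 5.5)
Your overall route is the paper's: it too only invokes the unfolding of \cite[Theorem 3.2]{Ik94} over the three cells of Section~\ref{dcd}, and your treatment of the two small cells and of the open cell is sound in outline. The step that fails is the character bookkeeping for even $n$. Formula (2) of Section~\ref{weil-rep} gives the torus factor $\gamma(1)/\gamma(Sa^n)$. When $n$ is even, $a^n$ is a square, so $\gamma(Sa^n)=\gamma(S)$ does not depend on $a$. There is therefore no ratio $\gamma(Sa)/\gamma(S)$ to turn into $\chi_S(a)$.

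Intrinsically, for even $\dim X$ the diagonal torus acts on $\omega_{\psi_S}$ through the quadratic character of the discriminant of $(X,S\sigma)$. This character is independent of $S$, because $S^n$ is a square, and it is trivial here ($\det A=1$, $n\equiv 2 \bmod 8$). Since $f$ contributes only $\omega(a)$ times a power of $|a|$, your computation, done correctly, yields a section of $I_1(\omega,s)$. This coincides with $I_1(\omega\chi_S,s)$ only when $\chi_S$ is trivial, e.g.\ for $S=-1$, the only index used in Section~\ref{proof}. So for even $n$ and general $S$ you cannot reach the stated character by this argument; as written, part (1) seems to carry over the odd-$n$ character. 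You should either restrict to such $S$ or state part (1) with the discriminant character.

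For odd $n$ your mechanism is the right one, but you have dropped the complex conjugation. $R$ involves $\overline{\omega_S(\cdot)\Phi(0)}$ and $\overline{\gamma(c)}=\gamma(-c)$, so the factor is $\varepsilon\,\gamma(-1)/\gamma(-Sa)$. The identity $\gamma(-Sa)\gamma(1)=\gamma(-S)\gamma(a)\langle -S,a\rangle$ then gives $\langle -S,a\rangle=\chi_S(a)$ times the genuine factor $\varepsilon\,\gamma(1)/\gamma(a)$, up to a constant whose global product is $1$. Your identity with $S$ would instead give $\langle S,a\rangle$.

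Also, in the same check you cannot simply change variables inside $\mathbf V(\A)$. The Borel that $\xi_0$ conjugates into $P$ normalizes $Y=\xi_0^{-1}P\xi_0\cap U$, so it cannot also normalize $XZ$. Conjugating $v(x,0,z)$ by it produces a $Y$-component and a $Z$-shift; for example, for $p$ upper triangular, the conjugation formula of Section~\ref{another} gives $v(ax,bx,z)$. You must remove these using the left $Y(\A)$-invariance of $f(\xi_0\,\cdot)$ and of $\omega_S(\cdot)\Phi(0)$. After that the Jacobian is $|a|^{-n}$, from $X$ alone, and the exponents do total $s+1$.

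Two small slips. With the paper's matrix, $\xi_1Z\xi_1^{-1}$ lies in the unipotent part of $\SO(V')\subset M$, not in $N$; this is harmless, since $f$ is invariant under all unipotent elements of $P(\A)$. And on $Y(\Q)\bs U(\A)$ there is no further $X(\Q)$-unfolding of the theta series to do, because the $X(\Q)$- and $Z(\Q)$-translates were already absorbed by the coset sum. Only the $Y(\Q)\bs Y(\A)$-integration, which kills the terms with $\xi\neq 0$, is needed.
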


\section{Fourier-Jacobi expansions of $E_l(g,\Phi_{\f})$ along $J$: an explicit form}\label{FJEl}
Recall the Eisenstein series $E_l(g,\Phi_{f}):=E_l(g,\Phi_{\f},l+1)$ 
defined by the Pollack's section $f_l(\ast,\Phi_{\f},l+1)\in I(\mathbf{1},l-\frac{n}{2})=
{\rm Ind}^{G(\A)}_{P(\A)}|\nu|^{l-\frac{n}{2}}$ (see Section \ref{AFSO}). 
In this section, we explicitly compute the Fourier-Jacobi coefficients of $E_l(g,\Phi_{\f})$. 

\subsection{Preliminaries at the archimedean place}\label{prearchi}
For each $a,S\in \Q$, put $\eta:=(a,\overbrace{0,\ldots,0}^n,S)\in V'(\Q)$ and 
we view it as $$\eta=(0,a,\overbrace{0,\ldots,0}^n,S,0)\in V(\Q).$$ 
We assume that  
\begin{equation}\label{PC}
(\eta,\eta)_{V'}:=\frac{1}{2}\eta J_{2,n}{}^t \eta=aS>0.
\end{equation}
Hence $a$ and $S$ have the same sign. 
 Notice that the action of $m\in \SO(V)$ on $\textbf{x}\in V$ is defined by 
$$m \textbf{x}:=\textbf{x}\cdot {}^t m$$ as a matrix multiplication. 
Further, for any $\textbf{x},\ \textbf{y}\in V$ and $m\in \SO(V)$, 
$$(\textbf{x},m\textbf{y})_V=\frac{1}{2}\textbf{x}J_{3,n+1}({}^t(\textbf{y}\cdot {}^t m)=
\frac{1}{2}\textbf{x}(J_{3,n+1} m){}^t\textbf{y}=\frac{1}{2}\textbf{x}({}^t m^{-1}J_{3,n+1}){}^t\textbf{y}=
(m^{-1}\textbf{x},\textbf{y})_V.$$

In the computation later, we need to consider 
\begin{equation}\label{trans1}
v(x,0,0)\ell(\diag(t,t^{-1}))=\diag(t,m,t^{-1})\in M(\R), m:=v(x,0,0)\ell(\diag(1,t^{-1}))
\end{equation}\label{need}
for $t\in GL_1$ and $x=(x_1,x',x_n)\in X$ where $x_1,x_n\in \mathbb{G}_a$ 
and $x'\in \mathbb{G}^{n-2}_a$. For such $(t,m)\in M(\R)$, we compute   
\begin{eqnarray}\label{ueta}
u_{2\pi \eta}(t,m)&=& i t (2\pi\eta,m(i v_1-v_2)) \nonumber \\
&=&- 2\pi i \{(a t^2+S)-i\cdot S t(x_1+x_n)-St^2 x_1x_n+St^2(x',x')\}.  
\end{eqnarray}
where $(x',x')=\frac{1}{2}x'A{}^t x'$ and $i=\sqrt{-1}$ and $v_1,v_2$ are given in (\ref{vv}). 

For above $(t,m)\in M(\R)$ with $m:=v(x,0,0)\ell(\diag(1,t^{-1}))$ in (\ref{trans1}), we will also compute 
\begin{equation}\label{Iab}
I_{\eta}(t,\alpha,\beta;\Phi):=\int_{X(\R)}\mathcal{W}_{2\pi \eta}(t,m)
\overline{F_{\alpha,\beta}(x;\Phi)}dx,\ t,\alpha,\beta\in \R,\ t>0,\beta>0,\ \Phi\in 
\mathcal{S}(X(\R)) 
\end{equation}
where 
\begin{eqnarray}\label{fab}
F_{\alpha,\beta}(x;\Phi)&=&\sqrt{\beta}^{\frac{n}{2}}\Phi(\sqrt{\beta} x) 
e^{2\pi \sqrt{-1}S \frac{\alpha}{2}\sigma(x,x)},\ \Phi \in S(X(\R))  
\end{eqnarray}
and 
$$\mathcal{W}_{2\pi \eta}(t,m)=t^{l+1}
\sum_{v=-l}^l \Bigg(\frac{|u_{2\pi \eta}(t,m)|}{u_{2\pi \eta}(t,m)}\Bigg)^v 
K_v (|u_{2\pi \eta}(t,m)|)X^{l-v}Y^{l+v}$$
where $u_{2\pi \eta}(t,m)$ is given explicitly in (\ref{ueta}). 
Here the exponent $\frac{n}{2}=\frac{1}{2}{\rm dim} X$ in $F_{\alpha,\beta}(x;\Phi)$ comes from 
the action of the Weil representation. 
Note that by using $K_v=K_{-v}$, we can also write it as 
$$\W_{2\pi \eta}(t,m)=t^{l+1}
\sum_{v=-l}^l \Bigg(\frac{u_{2\pi \eta}(t,m)}{|u_{2\pi \eta}(t,m)|}\Bigg)^v 
K_v (|u_{2\pi \eta}(t,m)|)X^{l+v}Y^{l-v}.
$$
We write (\ref{Iab}) as 
\begin{equation}\label{IabII}
I_{\eta}(t,\alpha,\beta;\Phi)=\sum_{v=-l}^l I_{\eta}(t,\alpha,\beta;\Phi)_v X^{l+v}Y^{l-v}
\end{equation}
where 
\begin{equation}\label{Iabv}
I_{\eta}(t,\alpha,\beta;\Phi)_v:=\int_{X(\R)}
t^{l+1}\Bigg(\frac{u_{2\pi \eta}(t,m)}{|u_{2\pi \eta}(t,m)|}\Bigg)^v 
K_v (|u_{2\pi \eta}(t,m)|)
\overline{F_{\alpha,\beta}(x;\Phi)}dx
\end{equation}
for above $(t,m)\in M(\R)$.

Changing $(x_1,x_n)$ with $(t_1+t_n,t_1-t_n)$ with the Jacobian $2$, the equation 
(\ref{ueta}) becomes  
\begin{eqnarray}\label{ueta2}
u_{2\pi \eta}(t,m)&=& i\cdot 
\sgn(S)\{(\lambda\cdot t_1+t^{-1}\lambda \cdot i)^2-\mu\}
\end{eqnarray}
where 
$$\lambda:=t\sqrt{2\pi |S|},\ \mu:=2\pi t^2 \Big(|a|+|S| t^2_n+|S|(x',x')\Big).$$
Henceforth we assume $t=\sqrt{\beta}$ for $\beta>0$ so that 
$$\lambda=\sqrt{\beta}\sqrt{2\pi |S|},\ 
\mu=2\pi \beta \Big(|a|+|S| t^2_n+|S|(x',x')\Big).
$$ 
Note that $\lambda,\mu>0$ since $aS,\beta>0$ and $(x',x')\ge 0$. 
For each $S\in \Q^\times$ and $r\in \Z_{\ge 0}$, put 
\begin{equation}\label{Schwartz1}
\Phi^1_{S,r}(t_1)=(2\pi |S|)^{\frac{r}{2}}  t^r_1 e^{-2\pi |S| t^2_1}
\end{equation}
 so that 
$\Phi^1_{S,r}(\sqrt{\beta} t_1)=(\lambda t_1)^r e^{-\lambda^2 t^2_1}$ 
 and 
 \begin{equation}\label{Schwartz2}
 \Phi_{S,r}:=
\Phi^1_{S,r}\otimes\Phi'\in S(X(\R)) 
\end{equation}
 where $\Phi'\in S(\R^{n-1})$ with respect to the 
coordinate $(x',t_n)\in \R^{n-1}\subset V''(\R)=X(\R)$.  We remark that the restriction of 
the quadratic space $V''$ to $\R^{n-1}$ is negative definite. 
Applying Theorem \ref{Po-const} and Remark \ref{rodd} to the case when $v=0$, we have that  
\begin{eqnarray}\label{Izerob}
&& I_{\eta}(\sqrt{\beta},0,\beta,\Phi_{S,r})_0=2\beta^{\frac{l+1}{2}+\frac{n}{4}}  \times \nonumber \\
&& \int_{(x',t_n)\in \R^{n-1}}\Bigg(\int_{t_1\in \R}
(\lambda t_1)^r e^{-\lambda^2 t^2_1} 
K_0(|(\lambda\cdot t_1+\sqrt{\beta}^{-1}\lambda \cdot i)^2-\mu|) dt_1
\Bigg)\overline{\Phi'(\sqrt{\beta} x',\sqrt{\beta} t_n)}dx'dt_n \nonumber \\
&=&2\beta^{\frac{l+1}{2}+\frac{n}{4}}\lambda^{-1}C(S,r)
\Bigg(\int_{(x',t_n)\in \R^{n-1}} e^{-\mu}\overline{\Phi'(\sqrt{\beta} x',\sqrt{\beta} t_n)}  dx'dt_n\Bigg)  \nonumber \\
&=&2\beta^{\frac{l+1}{2}+\frac{n}{4}}\lambda^{-1}C(S,r) C(S,\beta,\Phi') e^{-2\pi \beta |a|}  \nonumber
\end{eqnarray}
where 
\begin{equation}\label{C(S)}
C(S,r):=
\left\{\begin{array}{ll}
0 & \text{if $r$ is odd,} \\
C(r,\sqrt{\beta}^{-1}\lambda)=\ds\int_{t\in \R} t^r e^{-t^2}K_0(t^2+2\pi |S|)dt 
& \text{if $r$ is even,}
\end{array}\right.
\end{equation}
is the constant 
given in Theorem \ref{Po-const} for $r$ even (we use the notation $C(r,\sqrt{\beta}^{-1}\lambda)$ therein)
and 
$$C(S,\beta,\Phi'):=\int_{(x',t_n)\in \R^{n-1}} e^{-2\pi\beta |S| \{t_n^2+(x',x')\}}
\overline{\Phi'(\sqrt{\beta} x',\sqrt{\beta} t_n)}  dx'd_{t_n}.$$
By the change of variables, we have 
$$C(S,\beta,\Phi')=C_1(S,\Phi')\beta^{-\frac{n-1}{2}},\ 
 C_1(S,\Phi'):=C(S,1,\Phi').
$$
Thus, we have 
\begin{equation}\label{simpleform}
I_{\eta}(\sqrt{\beta},0,\beta,\Phi_{S,r})_0=C_2(S,r) 
\sqrt{\beta}^{l-\frac{n}{2}+1}e^{-2\pi |a| \beta }
\end{equation}
where $C_2(S,\Phi_{S,r}) =\ds\frac{2C(S,r)C_1(S,\Phi')}{\sqrt{2\pi|S|}}$. 

\begin{remark}\label{non-vanishing} Let $S\in \Q^\times$. If $\Phi_{S,0}$ is 
Gaussian, the constant $C_2(S,\Phi_{S,0})$ is non-zero. 
\end{remark}

Since $\{\Phi^1_{S,r}\}_{r\ge 0}$ is a topological basis of $\mathcal{S}(\R)$, we have the following result:
\begin{theorem}\label{completeform}Let $S\in \Q^\times$. For any $\Phi\in \mathcal{S}(X(\R))$, 
there exists a constant $C_3(S,\Phi)$ depending only on $S$ and $\Phi$ such that 
$$I_{\eta}(\sqrt{\beta},0,\beta,\Phi_{S})_0=C_3(S,\Phi) 
\sqrt{\beta}^{l-\frac{n}{2}+1}e^{-2\pi |a| \beta }.
$$
\end{theorem}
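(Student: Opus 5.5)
The plan is to reduce to the special family $\Phi_{S,r}=\Phi^1_{S,r}\otimes\Phi'$ already treated in (\ref{simpleform}), and then pass to a general $\Phi$ by linearity and continuity. The point is that the dependence on $\beta$ in (\ref{simpleform}) is the same for every $r$ and every $\Phi'$. Only the constant $C_2(S,\Phi_{S,r})$ varies, and it is built from $C(S,r)$ and $C_1(S,\Phi')$, neither of which depends on $\beta$ or $a$.

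\textbf{Step 1: the map is continuous.} First I will check that for fixed $\beta>0$, $S$ and $a$, the map $\Phi\mapsto I_{\eta}(\sqrt{\beta},0,\beta;\Phi)_0$ is a continuous linear functional on $\mathcal{S}(X(\R))$.
\begin{itemize}
\item Linearity (antilinearity, because of the complex conjugate) is immediate from (\ref{Iabv}).
\item For continuity, note that $|u_{2\pi\eta}(t,m)|\ge 2\pi\,\mathrm{Re}(\cdots)$. From (\ref{ueta2}), $|u|$ grows at least quadratically in the variables $(t_1,t_n,x')$, apart from the region where the two pieces of the real part cancel.
\item Even without this, $K_0$ has only a logarithmic singularity at $0$, so it is locally integrable and bounded at infinity.
\item Hence the integrand is dominated by $(1+|\log|u||)\cdot|\Phi(\sqrt\beta x)|$. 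This gives a bound by a finite Schwartz seminorm of $\Phi$.
\end{itemize}

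\textbf{Step 2: density.} The functions $\Phi^1_{S,r}(t_1)$, $r\ge0$, are polynomials times a fixed Gaussian, so their span is dense in $\mathcal{S}(\R)$. Together with the algebraic tensor product $\mathcal{S}(\R)\otimes\mathcal{S}(\R^{n-1})$ being dense in $\mathcal{S}(X(\R))$, this shows that finite sums $\sum_j \Phi_{S,r_j}^{1}\otimes\Phi'_j$ are dense. For each such sum, (\ref{simpleform}) gives
\[
I_\eta(\sqrt\beta,0,\beta;\cdot)_0=\Big(\sum_j C_2(S,\Phi_{S,r_j}\otimes\Phi'_j)\Big)\sqrt{\beta}^{\,l-\frac n2+1}e^{-2\pi|a|\beta}.
\]

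\textbf{Step 3: passing to the limit.} Dividing by the nonzero factor $\sqrt{\beta}^{\,l-\frac n2+1}e^{-2\pi|a|\beta}$, the functional $\Lambda_\beta(\Phi):=I_\eta(\ldots)_0/(\sqrt{\beta}^{\,l-\frac n2+1}e^{-2\pi|a|\beta})$ coincides on a dense subspace with a functional independent of $\beta$ and $a$. Fix one value $\beta_0=1$ and set $C_3(S,\Phi):=\Lambda_{1}(\Phi)$, which is continuous by Step 1. For any $\beta$, the two continuous functionals $\Lambda_\beta$ and $\Lambda_1$ agree on a dense set, so they agree everywhere. This gives the stated formula.

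\textbf{Main obstacle.} I expect the main obstacle to be Step 1: showing uniform integrability near the zero locus of $u_{2\pi\eta}$, where $K_0$ blows up, and controlling everything by Schwartz seminorms. I would first try the bound $K_0(y)\le C(1+|\log y|)e^{-y}$ together with the explicit quadratic form of $u$ in (\ref{ueta2}). If that is delicate, a fallback is to use the $\widetilde{\SO(2)}$-type decomposition (\ref{Ktype}): expand $\Phi$ in Hermite functions, for which the formula is already known, and establish convergence of the resulting series with the explicit constants $C(S,r)$.
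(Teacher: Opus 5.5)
Your argument is the paper's own: the paper gets this statement in one line from (\ref{simpleform}) and the density of the span of $\{\Phi^1_{S,r}\}_{r\ge0}$ in $\mathcal{S}(\R)$. Your Steps 1--3 only make the continuity and density explicit.

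Step 1 is easier than you fear. Write $u_{2\pi\eta}=\pm\sqrt{-1}\,z$ with $z=(s+\lambda_0\sqrt{-1})^2-\mu$, where $s=\lambda t_1$ and $\lambda_0=\sqrt{2\pi|S|}$. Then $|z|\ge\lambda_0^2/2=\pi|S|$ everywhere, so $K_0(|u|)$ is bounded and there is no singularity to control.

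The genuine gap, which the paper's one-line proof shares, is the input you take for granted: that the $\beta$-dependence in (\ref{simpleform}) is the same for every even $r$. For $r\ge2$ this rests on Theorem \ref{Po-const}, i.e.\ $I_0(2r,\mu,\lambda)=C(2r,\lambda)e^{-\mu}$, and that identity is false for $r\ge1$. Put $A=\mu+\lambda^2$ and $z=(t+\lambda\sqrt{-1})^2-\mu$. For $|t|\le\sqrt{A/2}$ one has $A/2\le|z|\le A-t^2+2\lambda^2$. Also $K_0(y)\ge c\,y^{-1/2}e^{-y}$ for $y\ge1$. Since the integrand is positive,
\[
I_0(2r,\mu,\lambda)\ \ge\ \frac{c\,e^{-\mu-3\lambda^2}}{\sqrt{A+2\lambda^2}}\int_{|t|\le\sqrt{A/2}}t^{2r}\,dt\ \gg_{r,\lambda}\ \mu^{r}e^{-\mu}\qquad(\mu\to\infty).
\]
So $e^{\mu}I_0(2r,\mu,\lambda)$ is unbounded as soon as $r\ge1$.

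Consequently no density argument can prove the statement for general $\Phi$, because it is false. Take $\Phi=\Phi_{S,2}=\Phi^1_{S,2}\otimes\Phi'$ with $\Phi'>0$, e.g.\ a Gaussian.
\begin{itemize}
\item The $t_1$-integration in the paper's computation is still exact. It gives $\lambda^{-1}I_0(2,\mu,\lambda_0)$ with $\mu=2\pi\beta(|a|+|S|(t_n^2+(x',x')))\ge 2\pi\beta|a|$.
\item Every factor of the integrand in (\ref{Iabv}) with $v=0$ is nonnegative.
\item The bound above therefore yields $I_{\eta}(\sqrt\beta,0,\beta;\Phi_{S,2})_0\gg\beta\cdot\sqrt{\beta}^{\,l-\frac n2+1}e^{-2\pi|a|\beta}$ as $\beta\to\infty$.
\end{itemize}
Hence no constant $C_3(S,\Phi)$ exists.

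Heuristically this is expected. $t_1^2e^{-2\pi|S|t_1^2}$ contains a higher Hermite function in the positive-definite direction of $X(\R)$, which shifts the $\widetilde{\SO(2)}$-weight. Whittaker functions of non-extremal weight carry a nonconstant polynomial factor in $\beta$.

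What your argument does establish uses only Pollack's case $r=0$ of Theorem \ref{Po-const} and the vanishing for odd $r$. It proves the formula for $\Phi=\Phi^1_{S,0}\otimes\Phi'$ with arbitrary $\Phi'$, plus any part odd in $t_1$, which contributes zero. In particular it holds for the Gaussian $\Phi_{S,0}$, which is the case actually used in Lemma \ref{localFJarchi} and Section \ref{proof}.
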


Similarly, for any $v$-th component, we have 
\begin{theorem}\label{v-comp}Let $S\in \Q^\times$. For even $r\ge 0$ and $v$ with 
$-l\le v\le l$,  $$I_{\eta}(\sqrt{\beta},0,\beta,\Phi_{S,r})_v=C_2(S,\Phi_{S,r})(-i)^v \sgn(S)^v   
\sqrt{\beta}^{l-\frac{n}{2}+1}e^{-2\pi |a| \beta }.
$$
\end{theorem}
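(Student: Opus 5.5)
We treat the $v$-th component exactly as the $v=0$ case leading to (\ref{simpleform}), the only new ingredient being the phase factor $\big(u_{2\pi\eta}(t,m)/|u_{2\pi\eta}(t,m)|\big)^v$. Put $t=\sqrt{\beta}$ and $\alpha=0$ in (\ref{Iabv}), and pass to the coordinates $(x_1,x_n)=(t_1+t_n,t_1-t_n)$, which contributes the Jacobian $2$. By (\ref{ueta2}),
$$u_{2\pi\eta}=i\,\sgn(S)\,\{(\lambda t_1+\sqrt{\beta}^{-1}\lambda i)^2-\mu\},$$
so the phase is
$$(i\,\sgn(S))^v\Big(\frac{w}{|w|}\Big)^v,\qquad w:=(\lambda t_1+\sqrt{\beta}^{-1}\lambda i)^2-\mu .$$
The factor $(i\,\sgn(S))^v$ comes out of the integral. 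The remaining problem is the one-variable integral
$$\int_{\R}(\lambda t_1)^r e^{-\lambda^2t_1^2}\Big(\frac{w}{|w|}\Big)^vK_v(|w|)\,dt_1 ,$$
and we claim it equals $(-1)^v$ times the $v=0$ integral evaluated in Theorem \ref{Po-const}, namely $\lambda^{-1}C(S,r)e^{-\mu}$. Granting this, the $(x',t_n)$-integration against $\overline{\Phi'(\sqrt\beta x',\sqrt\beta t_n)}$ is literally the one already done. It yields $C(S,\beta,\Phi')=C_1(S,\Phi')\beta^{-\frac{n-1}{2}}$ and the factor $e^{-2\pi\beta|a|}$. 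Collecting powers of $\beta$ as in (\ref{simpleform}) then gives
$$C_2(S,\Phi_{S,r})\,(i\,\sgn(S))^v(-1)^v\,\sqrt\beta^{\,l-\frac n2+1}e^{-2\pi|a|\beta},$$
and $(i)^v(-1)^v=(-i)^v$ is the stated constant.

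To prove the claim, we shift the contour $t_1\mapsto t_1-\sqrt{\beta}^{-1}i$. Before shifting, we express $(w/|w|)^vK_v(|w|)$ holomorphically by the integral formula
$$\Big(\frac{w}{|w|}\Big)^vK_v(|w|)=\frac12\int_0^\infty \exp\!\Big(-\tfrac12\big(w\,y+\overline{w}\,y^{-1}\big)\Big)\,y^{v}\,\frac{dy}{y},$$
which follows from the definition of $K_v$ by the substitution $y\mapsto y|w|/w$, after checking the signs. After the substitution $t_1=s-\sqrt\beta^{-1}i$, the variable $w$ becomes $\lambda^2s^2-\mu$, which is real. The Gaussian weight becomes
$$e^{-\lambda^2 s^2+2\lambda^2\sqrt\beta^{-1}is+\lambda^2\beta^{-1}}.$$
Since $\lambda^2\beta^{-1}=2\pi|S|$, the leftover phase and constant should combine with the $y$-integral exactly as in Theorem \ref{Po-const}. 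For $r$ even this reduces everything to integrals of the form $\int t^re^{-t^2}K_v(t^2+2\pi|S|)\,dt$ times explicit factors. Alternatively, and more simply, we apply Theorem \ref{Po-const} and Remark \ref{rodd} directly for general $v$, if they are stated for all $v$ (the text says ``applying to the case $v=0$''), and read off the $v$-dependence as $(-1)^v$ times the value at $v=0$.

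The main obstacle is this $v$-dependence. We have to see that the phase factor contributes exactly $(-1)^v$, equivalently $(-i\,\sgn(S))^v$ overall, and that the resulting constant is still $C(S,r)$ rather than a $v$-dependent Bessel integral. The route we would try first is the following. The function $x\mapsto\mathcal W_{2\pi\eta}(t,m(x))$ is annihilated by the differential operator $\mathcal D_l$, and via Pollack's first-order system this relates consecutive components $v\to v+1$, with a factor $\pm i\,\sgn(S)$ up to derivatives in $x$. Integrating by parts against the Gaussian-type test function $\overline{F_{0,\beta}}$ should kill the derivative terms when $\alpha=0$. This gives the recursion
$$I_v=-i\,\sgn(S)\,I_{v-1},$$
and induction from the $v=0$ result (\ref{simpleform}) gives the claim. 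If that bookkeeping fails, we fall back on the explicit contour computation described above.
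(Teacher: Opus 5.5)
Your reduction is the paper's: pull $(i\,\sgn(S))^v$ out of $u_{2\pi\eta}$ via (\ref{ueta2}) and leave the $(x',t_n)$-integration untouched. The route you list only as an ``alternative'' is exactly how the paper gets the $v$-dependence. Theorem \ref{Po-const} is stated for every $v\in\Z$. So, putting $t=\lambda t_1$, your one-variable integral equals $\lambda^{-1}I_v(r,\mu,\sqrt{2\pi|S|})=(-1)^v\lambda^{-1}C(S,r)e^{-\mu}$, and $(i\,\sgn(S))^v(-1)^v=(-i)^v\sgn(S)^v$. Nothing more is needed, and Remark \ref{rodd} plays no role because $r$ is even.

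In the appendix, the factor $(-1)^v$ itself comes from three facts:
$I_0=C(2r,\lambda)e^{-\mu}$; the symmetry $I_{-v}=I_v$ for even $r$ (via $t\mapsto -t$); and the recurrence (\ref{recurrence}). These give $I_1=\partial_\mu I_0=-C(2r,\lambda)e^{-\mu}$, and then $I_{v+1}=2\partial_\mu I_v-I_{v-1}$ inductively.

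You should drop the other two routes, since neither works as sketched.

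In the integration-by-parts route, nothing makes the derivative terms vanish. Integrating by parts only moves the derivatives onto $\overline{F_{0,\beta}}$. For $\Phi_{S,r}$ this produces integrals of the same type with odd powers of $t_1$ (or extra factors in $t_n,x'$). Such integrals need not vanish when $v\neq 0$: for instance, by Remark \ref{rodd}, $I_1(r,\mu,\lambda)$ with $r$ odd is $\sqrt{-1}$ times a positive integral. The relation between neighbouring components that the paper actually uses is the three-term recurrence in the parameter $\mu$. The two-term relation $I_v=-i\,\sgn(S)I_{v-1}$ is the conclusion, not something you can read off from $\mathcal D_l$.

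In the contour route, there are two problems.
First, your integral representation of $(w/|w|)^vK_v(|w|)$ converges only where $\mathrm{Re}\,w>0$. But $\mathrm{Re}\,w=\lambda^2t_1^2-2\pi|S|-\mu<0$ for small $t_1$.
Second, after the shift $t_1=s-\sqrt{\beta}^{-1}i$ only $w$ becomes real. The holomorphic continuation of $\overline{w}$ becomes $(\lambda s-2\sqrt{\beta}^{-1}\lambda i)^2-\mu$, which is not real, so the integrand does not reduce to the shape treated in Theorem \ref{Po-const}.
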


\subsection{Computation of Fourier-Jacobi coefficients}\label{cfjc}
Now, we are ready to compute the Fourier-Jacobi expansion of $E_l$. 
Recall the settings in Section \ref{weil-rep}. 

Let $S\in \Q^\times$ and $\Phi=\Phi_{\f}\otimes \Phi_\infty\in S(X(\A))$. 
We define the Fourier-Jacobi coefficient of $E_l(\ast,\Phi_{\f},l+1)$ with respect to $\Phi$ and of index $S$ by 
\begin{equation}\label{FJC}
E_l(\widetilde{h})_{S,\Phi}:=\int_{U(\Bbb Q)\bs U(\Bbb A)}E_l(v(x,y,z)h,\Phi_{\f},l+1)\overline{\Theta_{\psi_S}
(v(x,y,z)\widetilde{h};\Phi)}dv(x,y,z),  \widetilde{h}\in \widetilde{H(\Bbb A)}
\end{equation} 
where $\widetilde{h}=(h,\ve)\in \widetilde{H(\Bbb A)}$. 
Notice that $n(x)=v(0,0,z)$ for $x=(z,\overbrace{0,\ldots,0}^{n+1})\in V'$ 
and for such $x$ and $\eta=(\eta_1,\ldots,\eta_{n+2})\in V'(\Bbb Q)$, we have 
$\psi_{\eta}(n(x))=\psi(\tfrac{1}{2}\eta_{n+2}z)$. 
Thus, it is easy to see that 
$$E_l(\widetilde{h})_{S,\Phi}=\int_{U(\Bbb Q)\bs U(\Bbb A)}E_l(v(x,y,z)h,\Phi_{\f},l+1)_{S}\overline{\Theta_{\psi_S}
(v(x,y,z)\widetilde{h};\Phi)}dv(x,y,z), $$
where 
$$E_l(g,\Phi_{\f},l+1)_{S}=\sum_{\eta=(\ast,\ldots,\ast,2S)\in V'(\Q)\atop q(\eta)\ge 0}
a_{E_l}(\eta)(g_{\f}\cdot \Phi_{\f})\mathcal{W}_{2\pi \eta}(g_\infty),\ g=g_{\f}g_\infty\in G(\A).$$
It is easy to see that 
$v(x,y,z)=v(0,y,z+(x,y)_{V''})v(x,0,0)$ where $(x,y)_{V''}=\frac{1}{2}xJ_{1,n-1}{}^t y=\frac{1}{2}\sigma(x,y)$. Then, for $\eta=(\eta_1,\eta',2S)\in V'(\Q)$ with 
$\eta'\in V''(\Q),\ \eta_1\in \Q$, 
$$\psi_{\eta}(v(0,y,z+(x,y)_{V''}))=
\psi(\tfrac{1}{2}\sigma(\eta',y))\psi(S(z+\tfrac{1}{2}\sigma(x,y))).$$

Then, we have 
$$E_l(\widetilde{h})_{S,\Phi}=\int_{X(\Q)\bs X(\A)}\Bigg(
\sum_{\eta=(a,\eta',2S)\in V'(\Q)\atop q(\eta)\ge 0}
a_{E_l}(\eta)(v(x_{\f},0,0)h_{\f}\cdot \Phi_{\f})\mathcal{W}_{2\pi \eta}(v(x_\infty,0,0)h_\infty)
\Bigg) $$
$$\times \overline{\sum_{\xi\in X(\Q)}(\omega_{\psi_S}(h)\Phi)(x+\xi)}
\Bigg(\int_{Y(\Q)\bs Y(\Q)}\psi(\sigma(\tfrac{\eta'}{2}-S\xi,y))dy\Bigg) dx.
$$
We observe that the integral 
$\ds\int_{Y(\Q)\bs Y(\Q)}\psi(\sigma(\tfrac{\eta'}{2}-S\xi,y))dy$ is zero unless 
$\eta'=2S\xi$. Further, we can write 
\begin{equation}\label{etaxi}
(a,2S\xi,2S)=v(-\xi,0,0)\cdot \eta_{a,S,\xi}\cdot v(\xi,0,0),\ 
\eta_{a,S,\xi}=(a+S\sigma(\xi,\xi),\overbrace{0,\ldots,0}^n,S).
\end{equation}
Thus, by unfolding the sum $\ds\sum_{\xi\in X(\Q)}$ in the theta function and 
$\ds\int_{X(\Q)\bs X(\A)}$, we have 
\begin{equation}\label{fj-exp1}
E_l(\widetilde{h})_{S,\Phi}=\ds\sum_{\xi\in X(\Q)=V''(\Q)}E_{\xi,S,\Phi}(\widetilde{h})
\end{equation}
where 
$$E_{\xi,S,\Phi}(\widetilde{h})=\sum_{a\in \Q \atop 
aS+\frac{S^2}{2}\sigma(\xi,\xi)\ge 0}
\int_{X(\A_f)}a_{E_l}(\eta_{a,S,\xi})(v(x_{\f},0,0)\widetilde{h}_{\f}\cdot \Phi_{\f})
\overline{(\omega_{\psi_{S,\f}}(v(x_{\f},0,0)\widetilde{h}_{\f})\Phi_{\f})(0)}dx_{\f} $$
$$\times \int_{X(\R)}\mathcal{W}_{2\pi \eta_{a,S,\xi}}(v(x_\infty,0,0)\widetilde{h}_\infty)\overline{
(\omega_{\psi_{S,\infty}}(v(x_\infty,0,0)\widetilde{h}_\infty)\Phi_{\infty})(0)}dx_\infty.$$
Note that (\ref{fj-exp1}) is a finite sum (see \cite[Section 5.2]{KY}).

Thus we have 
$$E_{\xi,S,\Phi}(\widetilde{h})=\sum_{a\in \Q \atop 
aS+\frac{S^2}{2}\sigma(\xi,\xi)\ge 0}a_{E_l}(\eta_{a,S,\xi})(\widetilde{h}_{\f}\Phi_{\f})I_{\eta}(a,S,\xi;\Phi_\infty)
(\widetilde{h}_\infty),\ \widetilde{h}=\widetilde{h}_{\f}\widetilde{h}_\infty\in H(\A)$$
where 
\begin{equation}\label{inf-part1}I_{\eta}(a,S,\xi;\Phi_\infty)(\widetilde{h}_\infty):=
\int_{X(\R)}\mathcal{W}_{2\pi \eta_{a,S,\xi}}(v(x_\infty,0,0)\widetilde{h}_\infty)\overline{
(\omega_{\psi_{S,\infty}}(v(x_\infty,0,0)\widetilde{h}_\infty)\Phi_{\infty})(0)}dx_\infty.
\end{equation}

Next, we compute $I_{\eta}(a,S,\xi;\Phi_\infty)(\widetilde{h}_\infty)$ with 
$\widetilde{h}_\infty=(h_\infty,\ve_\infty)\in \widetilde{\SL_2(\R)}$. By Iwasawa decomposition of 
$\SL_2(\R)$, 
we may compute it for $h_\infty=
\begin{pmatrix}
1 & \alpha \\
0 & 1 
\end{pmatrix}\diag(\sqrt{\beta},\sqrt{\beta}^{-1})
$, $\alpha\in \R,\ \beta\in \R_{>0}$ so that $h_\infty(\sqrt{-1})=\alpha+\beta\sqrt{-1}\in \mathbb{H}$. 
It is easy to see that 
$$v(x_\infty,0,0)h_\infty=\Bigg(v(x_\infty,0,0)\begin{pmatrix}
1 & \alpha \\
0 & 1 
\end{pmatrix}v(-x_\infty,0,0)\Bigg)v(x_\infty,0,0)\diag(\sqrt{\beta},\sqrt{\beta}^{-1})$$
$$\phantom{xxxxxxxxxxxx}=n(-\alpha,\alpha x_\infty,\frac{\alpha}{2}
\sigma(x_\infty,x_\infty))v(x_\infty,0,0)\diag(\sqrt{\beta},\sqrt{\beta}^{-1})\in N(\R)M(\R)$$
and 
$$v(x_\infty,0,0)\diag(\sqrt{\beta},\sqrt{\beta}^{-1})=\diag(\sqrt{\beta},m,\sqrt{\beta}^{-1}),\ 
m=v(x_\infty,0,0)\ell(1,\sqrt{\beta}^{-1}).$$
Using this, we have 
$$\mathcal{W}_{2\pi \eta_{a,S,\xi}}(v(x_\infty,0,0)h_\infty)=
\psi_{w_1(2\pi \eta_{a,S,\xi})w^{-1}_1}(n(-\alpha,\alpha x_\infty,\tfrac{\alpha}{2}
\sigma(x_\infty,x_\infty)))$$
$$\phantom{xxxxxxxxxxxxxxx}\times \mathcal{W}_{2\pi \eta_{a,S,\xi}}(v(x_\infty,0,0)\diag(\sqrt{\beta},\sqrt{\beta}^{-1}))$$
$$=\psi_\infty(-\alpha(a+S\sigma(\xi,\xi)))\psi_\infty(\tfrac{S\alpha}{2}\sigma(x_\infty,x_\infty))
\mathcal{W}_{2\pi \eta_{a,S,\xi}}(\beta,m)$$
while by definition of the Weil representation and recalling (\ref{fab}), 
\begin{eqnarray*}
\omega_{\psi_{S,\infty}}(v(x_\infty,0,0)\widetilde{h}_\infty)\Phi_{\infty})(0) &=&
\ve^n_\infty \frac{\gamma_\infty(1)}{\gamma_\infty(S)}
F_{\alpha,\beta}(x_\infty;\Phi_\infty) \\
& =& \ve^n_\infty  \frac{\gamma_\infty(1)}{\gamma_\infty(S)}
\beta^{\frac{n}{4}}\Phi_\infty(\sqrt{\beta} x_\infty)\psi_\infty(\tfrac{S\alpha}{2}\sigma(x_\infty,x_\infty)).
\end{eqnarray*}
Therefore, we have 
$$I_{\eta}(a,S,\xi;\Phi_\infty)(h_\infty)=\ve^n_\infty 
\frac{\gamma_\infty(-1)}{\gamma_\infty(-S)}e^{-2\pi\alpha \sqrt{-1}(a+S\sigma(\xi,\xi))}
\int_{X(\R)}\mathcal{W}_{2\pi \eta_{a,S,\xi}}(\beta,m)
\overline{F_{0,\beta}(x;\Phi_\infty)}dx_\infty$$
$$=\ve^n_\infty  \frac{\gamma_\infty(-1)}{\gamma_\infty(-S)} e^{-2\pi\alpha \sqrt{-1}(a+S\sigma(\xi,\xi))}I_{\eta_{a,S,\xi}}(\sqrt{\beta},0,\beta,\Phi_\infty)$$
in terms of the notation (\ref{Iab}). Summing up, we have 
\begin{prop}\label{fjexp2} 
Keep the notation being as above. It holds that 
\begin{equation}\label{fjexp-eq1}
E_{\xi,S,\Phi}(\widetilde{h})=\ve^n_\infty  \frac{\gamma_\infty(-1)}{\gamma_\infty(-S)}\sum_{a\in \Q \atop 
aS+S^2\sigma(\xi,\xi)\ge 0}a_{E_l}(\eta_{a,S,\xi})(h_{\f}\Phi_{\f})
I_{\eta_{a,S,\xi}}(\sqrt{\beta},0,\beta,\Phi_\infty)e^{-2\pi\alpha \sqrt{-1}(a+S\sigma(\xi,\xi))}
\end{equation}
for $\widetilde{h}=(h,\ve)\in {\rm Mp}_2(\A)$ with $h_\infty \sqrt{-1}=\alpha+\beta \sqrt{-1}$.  
\end{prop}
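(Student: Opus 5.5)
Proposition \ref{fjexp2} is essentially a bookkeeping consequence of the computations just carried out, so the plan is to assemble them in order. First, I start from the unfolded expression (\ref{fj-exp1}) for $E_{\xi,S,\Phi}(\widetilde{h})$, which separates, for each $a$ with $q(\eta_{a,S,\xi})\ge 0$, into a finite-adelic integral over $X(\A_\f)$ and the archimedean integral (\ref{inf-part1}). The condition $q(\eta_{a,S,\xi})\ge 0$ is rewritten using $\eta_{a,S,\xi}=(a+S\sigma(\xi,\xi),0,\ldots,0,S)$, so $q(\eta_{a,S,\xi})=S(a+S\sigma(\xi,\xi))$, which gives the summation range $aS+S^2\sigma(\xi,\xi)\ge 0$.

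Second, the finite part. I use the transformation law of Proposition \ref{propertiesIW}(5), in the form recorded for $\widetilde{Q}_{\eta,g_p\Phi_p}$ in the introduction, applied with $g_p=v(x_p,0,0)\in M(\Q_p)$. I also use the Weil representation formula (1) of Section \ref{weil-rep} for $v(x_p,0,0)$. Together these show that the integrand over $X(\A_\f)$ depends on $x_\f$ only through $\overline{\Phi_\f(x_\f)}$ times a translate, and the integral collapses to the coefficient $a_{E_l}(\eta_{a,S,\xi})(h_\f\Phi_\f)$ (this is the same unfolding as in \cite[Section 5.2]{KY}). I will absorb the resulting $\Phi_\f$-pairing into the notation $a_{E_l}(\eta)(h_\f\Phi_\f)$, as the paper's notation does.

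Third, the archimedean part. I reduce to the Iwasawa form $h_\infty=\begin{pmatrix}1&\alpha\\0&1\end{pmatrix}\diag(\sqrt\beta,\sqrt\beta^{-1})$ and factor $v(x_\infty,0,0)h_\infty$ as an element of $N(\R)$ times $\diag(\sqrt\beta,m,\sqrt\beta^{-1})$. The equivariance of $\mathcal{W}$ under $N(\R)$ (property (1)) produces the phase $e^{-2\pi\alpha\sqrt{-1}(a+S\sigma(\xi,\xi))}$ together with $\psi(\tfrac{S\alpha}{2}\sigma(x,x))$. The Weil representation formulas (2),(3) produce $\ve_\infty^n\gamma_\infty(1)/\gamma_\infty(S)$ times $F_{\alpha,\beta}(x;\Phi_\infty)$.

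Taking the complex conjugate, the quadratic phase cancels exactly against the one in $F_{\alpha,\beta}$, leaving $\overline{F_{0,\beta}}$. The conjugate Weil constant is $\overline{\gamma_\infty(1)/\gamma_\infty(S)}=\gamma_\infty(-1)/\gamma_\infty(-S)$, and $\ve^n$ is real. This identifies the integral with $I_{\eta_{a,S,\xi}}(\sqrt\beta,0,\beta,\Phi_\infty)$ of (\ref{Iab}).

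The only delicate step is this archimedean cancellation: the $N(\R)$-component must be computed correctly, including the sign of $\alpha$ and the factor $\tfrac12$ in $\sigma$, so that the two quadratic phases truly cancel. I would check it against the explicit matrix $v(x,0,0)\begin{pmatrix}1&\alpha\\0&1\end{pmatrix}v(-x,0,0)=n(-\alpha,\alpha x,\tfrac\alpha2\sigma(x,x))$ and against $\psi_\eta(v(0,y,z))=\psi(\eta_1 z+\cdots)$. Combining the three steps yields (\ref{fjexp-eq1}).
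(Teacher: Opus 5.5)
Your proposal is correct and follows the paper's own argument. You start from the unfolded expression (\ref{fj-exp1}) and keep the $X(\A_\f)$-integral inside the notation $a_{E_l}(\eta_{a,S,\xi})(h_\f\Phi_\f)$. You then evaluate the archimedean integral (\ref{inf-part1}) via the factorization $v(x_\infty,0,0)h_\infty\in N(\R)M(\R)$, the $N(\R)$-equivariance of $\mathcal{W}$ and the Weil representation, so that $\psi_\infty(\tfrac{S\alpha}{2}\sigma(x_\infty,x_\infty))$ cancels against $\overline{F_{\alpha,\beta}}$ and $\overline{\gamma_\infty(1)/\gamma_\infty(S)}=\gamma_\infty(-1)/\gamma_\infty(-S)$. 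One wording fix: the finite integral does not literally collapse by the transformation law, since the index moved by $v(x_\f,0,0)$ varies with $x_\f$ (compare Lemma \ref{explocalFJ}); it is only absorbed into the notation, which is what the paper does and what you ultimately do.
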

According to (\ref{IabII}), we write 
\begin{equation}\label{vsum}
E_{\xi,S,\Phi}(\widetilde{h})=\sum_{v=-l}^l E_{\xi,S,\Phi}(\widetilde{h})_v X^{l-v}Y^{l+v}
\end{equation}
and $E_{\xi,S,\Phi}(\widetilde{h})_v $ is said to be the $v$-th component of $E_{\xi,S,\Phi}(\widetilde{h})$. 
Combining this proposition and applying (\ref{simpleform}) and Theorem \ref{completeform}, \ref{v-comp} for $\eta_{a,S,\xi}$ 
(just replace $a$ with $a+S\sigma(\xi,\xi)$ therein), we have 
\begin{corollary}\label{explicitformula1}Keep the notation being as above. 
Let $\Phi=\Phi_\f\otimes \Phi_\infty \in \mathcal{S}(X(\A))$. If $v\neq 0$, suppose that 
$\Phi_\infty=\Phi_{S,r}$ with even $r\ge 0$. Then,  
if $S>0$, 
$$
E_{\xi,S,\Phi}(\widetilde{h})_v=C_4(S,\Phi_\infty) \ve^n_\infty 
\sqrt{\beta}^{l-\frac{n}{2}+1} 
\sum_{a\in \Q_{>0} \atop 
a+S\sigma(\xi,\xi)\ge 0}a_{E_l}(\eta_{a,S,\xi})(h_{\f}\Phi_{\f})
(\bar{q}_\tau)^{a+S\sigma(\xi,\xi)}
$$
and if $S<0$, 
$$
E_{\xi,S,\Phi}(\widetilde{h})_0=C_4(S,\Phi_\infty) \ve^n_\infty  \sqrt{\beta}^{l-\frac{n}{2}+1} 
 \sum_{a\in \Q_{<0} \atop 
a+S\sigma(\xi,\xi)< 0}a_{E_l}(\eta_{a,S,\xi})(h_{\f}\Phi_{\f})(q_\tau)^{-(a+S\sigma(\xi,\xi))} 
$$
where $C_4(S,\Phi_\infty) =\frac{\gamma_\infty(-1)}{\gamma_\infty(-S)}\times 
\left\{\begin{array}{ll}
C_3(S,\Phi_\infty)  & \text{if $v=0$} \\
C_2(S,\Phi_{S,r}) & \text{if $v\neq 0$} 
\end{array}\right.
$ and $q_\tau=e^{2\pi\sqrt{-1} \tau}$ with $\tau=\alpha+\beta \sqrt{-1}$. 
\end{corollary}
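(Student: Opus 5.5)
This is a direct substitution into Proposition \ref{fjexp2}. Fix $v$ and extract the $X^{l-v}Y^{l+v}$-coefficient of (\ref{fjexp-eq1}) using (\ref{IabII}) and (\ref{vsum}). The $v$-th component of $E_{\xi,S,\Phi}(\widetilde{h})$ is then the prefactor $\ve^n_\infty\gamma_\infty(-1)/\gamma_\infty(-S)$ times
$$\sum_{a} a_{E_l}(\eta_{a,S,\xi})(h_\f\Phi_\f)\, I_{\eta_{a,S,\xi}}(\sqrt{\beta},0,\beta,\Phi_\infty)_v\, e^{-2\pi\alpha\sqrt{-1}(a+S\sigma(\xi,\xi))}.$$
Since $\eta_{a,S,\xi}=(a+S\sigma(\xi,\xi),0,\ldots,0,S)$, the archimedean computations of Section \ref{prearchi} apply with $a$ replaced by $a':=a+S\sigma(\xi,\xi)$.

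For the archimedean factor:
\begin{itemize}
\item When $v=0$, apply Theorem \ref{completeform}, which gives $C_3(S,\Phi_\infty)\sqrt{\beta}^{\,l-\frac n2+1}e^{-2\pi|a'|\beta}$ for arbitrary $\Phi_\infty$.
\item When $v\neq 0$, the hypothesis $\Phi_\infty=\Phi_{S,r}$ with $r$ even lets us apply Theorem \ref{v-comp}. This gives the same expression with $C_2(S,\Phi_{S,r})(-i)^v\sgn(S)^v$ in place of $C_3$.
\end{itemize}
Absorbing the Weil constants into $C_4(S,\Phi_\infty)$ and pulling out the $a$-independent factor $\sqrt{\beta}^{\,l-\frac n2+1}$, it remains to combine the exponentials.

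By the positivity condition (\ref{PC}) applied to $\eta_{a,S,\xi}$, we need $a'S>0$, so $a'$ has the sign of $S$; this also governs the summation range. If $S>0$, then $|a'|=a'$ and
$$e^{-2\pi a'\beta}e^{-2\pi\sqrt{-1}\alpha a'}=e^{-2\pi\sqrt{-1}a'(\alpha-\sqrt{-1}\beta)}=\bar q_\tau^{\,a'}.$$
If $S<0$, then $|a'|=-a'$ and
$$e^{2\pi a'\beta}e^{-2\pi\sqrt{-1}\alpha a'}=e^{2\pi\sqrt{-1}(-a')(\alpha+\sqrt{-1}\beta)}=q_\tau^{-a'}.$$
These are exactly the two displayed formulas.

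The main obstacle is bookkeeping rather than analysis.
\begin{itemize}
\item The indices with $q(\eta)=0$, allowed by the condition $\ge 0$ in Proposition \ref{fjexp2}, must be excluded or shown to contribute nothing. The archimedean computation of Section \ref{prearchi} assumed $aS>0$, so I would restrict the sum to $a'S>0$ and note that the remaining terms do not arise in the computation.
\item The stated range conditions ($a\in\Q_{>0}$, respectively $a\in\Q_{<0}$) should be read as the sign constraint coming from $a'S>0$.
\item The factor $(-i)^v\sgn(S)^v$ for $v\ne0$ has to be absorbed into the constant. When $S<0$ the statement records only the $v=0$ component, so only Theorem \ref{completeform} is needed there.
\end{itemize}
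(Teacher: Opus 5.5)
Your route is the paper's own. You substitute Theorem \ref{completeform} (for $v=0$) or Theorem \ref{v-comp} (for $v\neq 0$) into Proposition \ref{fjexp2} with $a$ replaced by $a'=a+S\sigma(\xi,\xi)$, and your combination of $e^{-2\pi|a'|\beta}$ with $e^{-2\pi\sqrt{-1}\alpha a'}$ into $\bar q_\tau^{\,a'}$, resp.\ $q_\tau^{-a'}$, is correct. You are also right that for $v\neq 0$ the factor $(-i)^v\sgn(S)^v$ from Theorem \ref{v-comp} is missing from $C_4(S,\Phi_\infty)$ as defined, so it must be put into the constant.

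The one step to change is your handling of the boundary indices $a'=0$, i.e.\ $q(\eta_{a,S,\xi})=0$. Condition (\ref{PC}) is only the standing assumption of the computation in Section \ref{prearchi}; it does not restrict the sum in Proposition \ref{fjexp2}. The terms with $a'=0$ do occur there. The expansion of $E_l$ includes rank-one indices, unlike that of $F_\phi$ (cf.\ Section \ref{VanishingRankOne}), and nothing forces $a_{E_l}(\eta_{a,S,\xi})$ to vanish. For $S>0$ the statement explicitly keeps these terms (the range is $a+S\sigma(\xi,\xi)\ge 0$). So asserting that they ``do not arise'' and discarding them would prove a different formula. Instead, check that the derivation of (\ref{simpleform}) goes through verbatim for $a'=0$. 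In (\ref{ueta2}) one gets $\mu=2\pi\beta|S|\bigl(t_n^2+(x',x')\bigr)$, which is positive off a null set of $(x',t_n)$. Hence Theorem \ref{Po-const} still evaluates the $t_1$-integral, and the outer integral gives the same expression with $e^{-2\pi|a'|\beta}=1=\bar q_\tau^{\,0}=q_\tau^{\,0}$. The boundary term therefore fits the same formula. For $S<0$ this also shows the range should read $a'\le 0$; the strict inequality in the statement is a slip.
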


\begin{remark}\label{partial}
For each $\Phi=\Phi_{\f}\otimes \Phi_\infty\in \mathcal{S}(X(\A))$ and $\xi\in X(\R)$, 
there exist $\Phi^{(1)}_{\xi,\f},\ldots,\Phi^{(r)}_{\xi,\f}\in \mathcal{S}(X(\A_\f))$ and 
constants $c_1,\ldots,c_r\in \C$ such that 
$$E_{\xi,S,\Phi}=\sum_{i=1}^r c_i E_{S,\Phi^{(i)}_{\xi,\f}\otimes\Phi_\infty}$$ 
{\rm(}this follows from the argument for \cite[Section 5.2]{KY} and 
the proof of \cite[Lemma 5.4]{KY1}{\rm)}. 

The same is true for each $v$-th component $(E_{\xi,S,\Phi})_v$. 
\end{remark}

We denote by $\kappa_\theta\in \widetilde{\SO(2)}$ corresponding to $e^{i\theta }\in U(1)$ under 
the natural identification $\widetilde{\SO(2)}\simeq \SO(2)\simeq U(1)$. 
Let $\Phi_\infty\in \mathcal{S}(X(\R))$ be an $\widetilde{\SO(2)}$-eigen function of $S(X(\R))$ for $\omega_{S,\infty}$. 
It follows from (\ref{Ktype}) that 
there exists an integer $k\in \Z_{\ge 0}$ such that 
\begin{equation}\label{eigenphi}
\omega_{S,\infty}(\kappa_\theta)\Phi_\infty=
e^{i\theta\cdot\sgn(S)(\frac{2-n}{2}+k)}\Phi_\infty,\ \kappa_\theta\in \widetilde{\SO(2)}.
\end{equation}
Thus, $\Phi_\infty$ is of $\widetilde{\SO(2)}$-weight $\sgn(S)(\frac{2-n}{2}+k)$. 
The bottom case of $k=0$ is attained by the Gaussian on $X(\R)$. 

The following result is an immediate consequence of Theorem \ref{EisenEisen}  with 
Remark \ref{partial} and it is consistent with Corollary \ref{explicitformula1}:
\begin{corollary}\label{explicitformula2}Keep the notation being as above.  
Let $\chi_S=\langle -S,\ast \rangle_\infty$ is the character of $\A^\times$ defined by the 
global  Hilbert symbol and $S$.
Let $\Phi=\Phi_\f\otimes \Phi_\infty\in \mathcal{S}(X(\A))$ and assume 
$\Phi_\infty$ is of $\widetilde{\SO(2)}$-weight {\rm(}-type{\rm)} $\sgn(S)(\frac{2-n}{2}+k)$. 
Then, the $v$-th component $(E_{\xi,S,\Phi})_v$ for each $-l \le v\le l$ is the Eisenstein series defined by a section of 
$\left\{\begin{array}{cc}
I_1(\chi_S,l-\frac{n}{2})  & \text{if $n$ is even} \\
 \widetilde{I}_1(\chi_S,l-\frac{n}{2})  & \text{if $n$ is odd}
\end{array}\right.
$
 whose
$\left\{\begin{array}{cc}
\text{$\SO(2)$-type}  & \text{if $n$ is even} \\
\text{$\widetilde{\SO(2)}$-type}   & \text{if $n$ is odd}
\end{array}\right.
$
is $v-\sgn(S)\Big(\frac{2-n}{2}+k\Big).$
In particular, $E_{\xi,S,\Phi}(\widetilde{h})_v$ corresponds to  
$$\begin{cases}
\text{a holomorphic modular form in $h_\infty \sqrt{-1}$ of weight $l-\frac{n}{2}+1$}, &\text{if $S<0$ and $v+k=l$}\\
\text{an anti-holomorphic modular form in $h_\infty \sqrt{-1}$ of weight $-(l-\frac{n}{2}+1)$}, &\text{if $S>0$ and $v-k=-l$}.
\end{cases}
$$
\end{corollary}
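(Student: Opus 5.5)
The plan is to deduce the statement from Theorem \ref{EisenEisen}, using Remark \ref{partial} to pass from $E_{\xi,S,\Phi}$ to genuine Fourier--Jacobi coefficients, and then reading off the archimedean type.

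\textbf{Reduction to Fourier--Jacobi coefficients.} By Remark \ref{partial}, each $(E_{\xi,S,\Phi})_v$ is a finite linear combination $\sum_i c_i (E_{S,\Phi^{(i)}_{\xi,\f}\otimes\Phi_\infty})_v$ with the same archimedean datum $\Phi_\infty$. It therefore suffices to treat the full Fourier--Jacobi coefficient $E_{\psi_S,\Phi}$ for $\Phi=\Phi_\f\otimes\Phi_\infty$ with $\Phi_\infty$ of weight $\sgn(S)(\frac{2-n}{2}+k)$. The $\mathbb V_l$-valued Eisenstein series $E_l$ is induced from the Pollack section in $I(\mathbf 1,l-\frac n2)$. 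Taking the $v$-th coordinate (the coefficient of $X^{l-v}Y^{l+v}$) gives a scalar Eisenstein series attached to a section $f_v$ of that degenerate principal series. Theorem \ref{EisenEisen}, applied at $s=l-\frac n2$ after analytic continuation in $s$ and with $\omega=\mathbf 1$, then shows that $(E_{\psi_S,\Phi})_v$ is the Eisenstein series on $\SL_2(\A)$ or ${\rm Mp}_2(\A)$ attached to $R(\ast;f_v,\Phi)\in I_1(\chi_S,l-\frac n2)$ or $\widetilde I_1(\chi_S,l-\frac n2)$, according to the parity of $n$.

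\textbf{Archimedean type.} We need the $\SO(2)$- or $\widetilde{\SO(2)}$-type of $R(h;f_v,\Phi)$ under right translation by $\kappa_\theta$. By (\ref{elldiag}) and the definition of $H$, $\ell(\kappa_\theta)$ lies in $K$. In the integral $R(h\kappa_\theta)$, conjugating $v$ by $\ell(\kappa_\theta)$ preserves $\bold V(\A)$ and its measure. The factor $f_v$ then picks up the character by which $\ell(\kappa_\theta)$ acts on the weight vector $X^{l-v}Y^{l+v}$ of $\tau_l$, namely $e^{iv\theta}$; the sign is fixed by matching with Corollary \ref{explicitformula1}. The factor $\overline{\omega_S(\cdots)\Phi(0)}$ contributes the complex conjugate of (\ref{eigenphi}), namely $e^{-i\theta\sgn(S)(\frac{2-n}{2}+k)}$. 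The type is therefore $v-\sgn(S)(\frac{2-n}{2}+k)$. Since the Eisenstein series is formed by summing over $B(\Q)\bs\SL_2(\Q)$ on the left, it inherits this right $K_\infty$-type.

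\textbf{Holomorphy.} A section of $I_1(\chi_S,s)$ with $s=l-\frac n2$ has $|a|^{s+1}$-behaviour, which corresponds to weight $\pm(s+1)=\pm(l-\frac n2+1)$. The lowest (respectively highest) $K$-type vector of this space gives a holomorphic (respectively antiholomorphic) Eisenstein series of weight $s+1$ (respectively $-(s+1)$). If $S<0$ and $v+k=l$, the type is $v+\frac{2-n}{2}+k=l-\frac n2+1$, giving the holomorphic case. If $S>0$ and $v-k=-l$, the type is $v-\frac{2-n}{2}-k=-(l-\frac n2+1)$, giving the antiholomorphic case. The $q_\tau$ and $\bar q_\tau$ expansions in Corollary \ref{explicitformula1} confirm this.

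\textbf{Main obstacle.} The delicate step is pinning down the sign convention of the $\tau_l$-character on $X^{l-v}Y^{l+v}$ under $\ell(\kappa_\theta)$. I would fix it by comparing with Theorem \ref{v-comp}, whose factor $(-i)^v\sgn(S)^v$ records exactly this phase.
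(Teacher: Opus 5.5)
Your route is the paper's own: it treats the corollary as an immediate consequence of Theorem~\ref{EisenEisen} and Remark~\ref{partial}, and you fill in how the archimedean type is read off. The gap is in the one step you make explicit. You assume the coordinate $f_v$ of $X^{l-v}Y^{l+v}$ picks up a character $e^{\pm\sqrt{-1}v\theta}$ under right translation by $\ell(\kappa_\theta)$. This is false: the monomials are not weight vectors for $\tau_l\circ\ell|_{\SO(2)}$.

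\emph{Why.} Take $r_\theta\in K\cap M$ rotating the plane $\langle v_1,v_2\rangle$ of $V'$. By (\ref{ourueta}), $u_\eta(t,mr_\theta)=e^{\pm\sqrt{-1}\theta}u_\eta(t,m)$. Together with (\ref{PollackSpherical}), the equivariance $\mathcal{W}_\eta(gk)=\tau_l(k^{-1})\mathcal{W}_\eta(g)$ then forces the monomials to be weight vectors for rotations of the positive space $\langle e+f,v_1,v_2\rangle$ about the axis $e+f$. By contrast, $\ell(\kappa_\theta)$ moves only the coordinates $1,2,n+3,n+4$ of $V$. So it rotates the plane $\langle e+f,v_1\rangle$ and fixes $v_2$. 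These are circles about orthogonal axes, which do not commute in $\SO(3)$. Since $\Sym^{2l}(\C^2)$ is faithful for $l\ge 1$, they cannot share a weight basis. Hence $f_v(g\,\ell(\kappa_\theta))$ is a combination of all the $f_{v'}(g)$.

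\emph{Theorem~\ref{v-comp} cannot fix the sign for you.} Its factor $(-\sqrt{-1}\sgn(S))^v$ does not depend on $\theta$, so it is not a character. What it shows is that the value vector is $\sum_v(-\sqrt{-1}\sgn S)^v\frac{X^{l+v}Y^{l-v}}{(l+v)!(l-v)!}$. This is a multiple of $(Y-\sqrt{-1}\sgn(S)X)^{2l}$, a pure power of a linear form, i.e.\ an extremal weight vector for a circle about an axis orthogonal to $e+f$, not a monomial.

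\emph{Consistency check.} Suppose the monomial components really had the distinct types $v-\sgn(S)(\frac{2-n}{2}+k)$. Corollary~\ref{explicitformula1} with Gaussian $\Phi_\infty$ gives all of them, up to nonzero constants, the same Fourier coefficients of shape $\sqrt{\beta}^{\,l-\frac n2+1}e^{-2\pi|\cdot|\beta}$. For an Eisenstein series from $I_1(\chi_S,l-\frac n2)$, a type-$m$ Whittaker function has this form only when $m=\pm(l-\frac n2+1)$.

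\emph{What is missing is a change of basis.} Decompose $\mathbb{V}_l$ into weight lines $w_j$ ($-l\le j\le l$) for $\tau_l\circ\ell|_{\SO(2)}$. Judging from Theorem~\ref{v-comp}, these are $(Y-\sqrt{-1}X)^{l+j}(Y+\sqrt{-1}X)^{l-j}$ up to normalization. Then run your argument on the $w_j$-coordinates of the section. Two smaller points:
\begin{itemize}
\item No conjugation of $v$ is needed; right translation by $\kappa_\theta$ together with (\ref{eigenphi}) suffices.
\item The inclusion $\ell(\SO(2))\subset K$ needs a direct check that $\ell(\SO(2))$ preserves $\langle e+f,v_1,v_2\rangle$; (\ref{elldiag}) does not give it.
\end{itemize}
This yields type $j-\sgn(S)(\frac{2-n}{2}+k)$, with $j$ normalized suitably, and the holomorphy statements at the extremal $j$. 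In other words, the ``$v$-th component'' in the corollary has to be read relative to this basis, not the monomial basis of (\ref{vsum}).
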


\begin{remark}\label{ClaAde} For any modular form $f:\mathbb{H}\lra \C$ of weight $k (\in \Z_{\ge 0})$ with respect to a suitable 
congruence subgroup, one can associate an automorphic form $\varphi_f$ on $\SL_2(\A)$ so that $\varphi_f((1_{\f},h_\infty))=
j(h_\infty,\sqrt{-1})^k f(h_\infty \sqrt{-1})$.  
Conversely, any automorphic form $\varphi$ on $\SL_2(\A)$ for which  
$j(h_\infty,\sqrt{-1})^{-k}\varphi((h_{\f},h_\infty))$ is holomorphic for every fixed $h_{\f}\in \SL_2(\A_{\f})$ arises from a 
modular form of weight $k$ in the above manner.  The last statement of Corollary \ref{explicitformula2} follows this sense.  
\end{remark}

\section{Fourier-Jacobi expansion in terms of degenerate Whittaker functions}\label{FJE-DWF}
In this section, we interpret the computation of Section \ref{FJES}, \ref{FJEl} in terms of degenerate Whittaker functions 
as in \cite[Section 7]{IY} or \cite[Section 5]{KY1}. 
We freely use the notation in Section \ref{weil-rep}. 
For $F\in\{\Q_p,\R,\A,\A_f\}$,  put
$\oH(F)=\left\{\begin{array}{ll}
\SL_2(F) & \text{for $n$ even,}\\ 
{\rm Mp}_2(F)  & \text{for $n$ odd}
\end{array}\right.
$ for simplicity. 
Let $B_{\oH}$ be the standard Borel subgroup of $\oH$ and $N_{\oH}$ be the unipotent radical of $B_{\oH}$. 
Let $w_{\oH}$ be the Weyl element of $\oH$.
 
\subsection{The non-archimedean case} 
Let $p$ be a prime number and $\mu$ a character of $\Q^\times_p$ with $s(\mu)>
\left\{\begin{array}{ll}
-1 & \text{for $n$ even,}\\ 
\frac{-1}{2}  & \text{for $n$ odd}
\end{array}\right.$. Let $\psi=\psi_p$ be the standard additive character of $\Q_p$ and 
let 
$\overline{I}(\mu)={\rm Ind}^{\oH(\Q_p)}_{B_{\oH}(\Q_p)}\mu$ denote the normalized parabolic induction of $\mu $ to $\oH$ where $\overline{I}(\mu)$ is defined similarly as in 
(\ref{ActionOfvarepsilon}) with respect to $\psi$ when $n$ is odd.  
For $S\in \Q^\times_p$, let $\chi_S=\langle -S,\ast \rangle$ be the character of $\Q^\times_p$ corresponding to 
the extension $\Q_p(\sqrt{-S})/\Q_p$ via the local class field theory. 
For each $f\in I(\mu)={\rm Ind}^{G(\Q_p)}_{P(\Q_p)}\mu\circ \nu$ with $s(\mu)>-\frac{1}{2}$, $\Phi\in \cS(X(\Q_p))$, and $h\in \oH(\Q_p)$, we define 
$$\beta^\psi_S(h;f\otimes\Phi):=\int_{Y(\Q_p)}\int_{Z(\Q_p)}f(w_0 w_{\oH} v(y,0,z)h)\overline{\omega_{S,p}(v(y,0,z)\Phi)(0)}dzdy\times 
\left\{\begin{array}{ll}
1 & \text{for $n$ even,}\\ 
\frac{1}{L(\frac{1}{2},\mu\chi_S)}  & \text{for $n$ odd}
\end{array}\right..
$$
\begin{lemma}\label{localFJnonarchi}Keep the notation and assumption as above. Then, 
$$\beta^\psi_{S}:I(\mu)\otimes_\C \cS(X(\Q_p))\lra \I(\mu \chi_S)$$
is a $V(\Q_p)$-invariant and $H(\Q_p)$-intertwining, non-zero map. Hence, 
$\beta^\psi_S(v\cdot h;f\otimes \omega_{S,p}(\gamma)\Phi)=\I(\mu)(\gamma)
\beta^\psi_S(h;f\otimes \Phi)$ for any $v\in V(\Q_p)$ and $h\in \oH(\Q_p)$.  
\end{lemma}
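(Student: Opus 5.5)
Our plan is to follow the standard local Fourier--Jacobi argument of \cite[Section 7]{IY} and \cite[Section 5]{KY1}, mirroring at the local level the global unfolding behind Theorem \ref{EisenEisen} via the open cell $P\xi_0 Q$ of Section \ref{dcd}. We proceed in three steps: convergence, equivariance, and non-vanishing.

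\textbf{Convergence.} Using the Iwasawa decomposition $G(\Q_p)=P(\Q_p)K$, we bound $|f(w_0w_{\oH}v(y,0,z)h)|$ by the $P$-part $|\nu|^{s(\mu)+\frac{n+2}{2}}$ of the argument. We compute $\nu$ explicitly for $w_0w_{\oH}v(y,0,z)$; the Schwartz function $\omega_{S,p}(v(y,0,z))\Phi(0)=\Phi(y)\psi_S(z)$ (after the identification of variables) cuts $y$ down to a compact set. The remaining $z$-integral is then a one-variable Jacquet-type integral for a rank-one induced representation, and it converges for $s(\mu)$ large. For the remaining range of $\mu$ we continue meromorphically through $f^{(z)}$, exactly as for $\bw^\mu_\eta$ in Section \ref{DWFnonarc}; the result is a polynomial in $p^{-z}$. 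When $n$ is odd, the factor $L(\frac12,\mu\chi_S)^{-1}$ is included precisely to cancel the possible pole, which is where the hypothesis $s(\mu)>-\frac12$ enters.

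\textbf{Equivariance.} We check the transformation laws on generators.
\begin{enumerate}
\item For $v\in U(\Q_p)$ we use the Heisenberg relations $v(x,y,z)v(x',y',z')=\cdots$ and the formulas (1) of the Weil representation. Conjugating $v$ past $w_0w_{\oH}$ moves part of it into $N\subset P$, where $f$ transforms trivially. A translation in $y,z$ then shows that the combined integrand is invariant. This uses $\xi_0^{-1}P\xi_0\cap U$ from Section \ref{dcd}.
\item For $h'\in\oH(\Q_p)$, the relation $\beta^\psi_S(hh';f\otimes\omega_{S,p}(h')\Phi)=\beta^\psi_S(h;\cdot)$ is immediate from the definition, since $h'$ commutes with the integration after the substitution $v\mapsto h'vh'^{-1}$. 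This conjugation is measure preserving because $\SL_2$ acts symplectically on $XY$.
\item The left $B_{\oH}$-equivariance is checked on the torus $\ell(\diag(a,a^{-1}))$ and on $N_{\oH}$. By (\ref{elldiag}) the torus element lies in $M$, so $w_0w_{\oH}$ conjugates it into $P$ with $\nu=a$; this produces $\mu(a)|a|^{\frac{n+2}{2}}$. A change of variables in $(y,z)$ gives the Jacobian $|a|^{-n-1}$, and formula (2) of the Weil representation produces $|a|^{n/2}$ together with the Weil constant ratio $\gamma_p(1)/\gamma_p(Sa^n)$. These combine to give $\delta_B^{1/2}\mu\chi_S(a)$, and in the metaplectic case the character in (\ref{ActionOfvarepsilon}). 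We expect this step to be the main bookkeeping obstacle, namely tracking the Weil constants to obtain exactly $\chi_S=\langle -S,\ast\rangle$. We would first verify it with $\Phi$ the characteristic function of $X(\Z_p)$, then extend. For $N_{\oH}$, formula (3) shows that the unipotent element conjugates into $N$ and acts by $\psi_S(\sigma(t,t)b)$, which cancels.
\end{enumerate}

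\textbf{Non-vanishing.} We take $f$ supported in the big cell $P(\Q_p)w_0w_{\oH}U_c$ for a small compact open $U_c$, and $\Phi$ the characteristic function of a small lattice. Then the integrand is a nonzero constant on a small set, so $\beta^\psi_S(1;f\otimes\Phi)\neq0$.
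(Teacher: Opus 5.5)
Your three-step outline (continuation, equivariance on generators via the open cell, non-vanishing) is the route the paper has in mind. Its proof simply defers to \cite[Lemma 5.1]{KY1} and gets non-vanishing from a Schwartz function of small support, as in your last step. But the torus computation, which you identify as the crux, does not work as written. First, $\gamma=\diag(a,a^{-1})$ has $\det\gamma=1$, so $\ell^{-1}(\gamma)v(x,0,z)\ell(\gamma)=v(ax,0,z)$. The center $Z$ is fixed and only the $n$-dimensional variable is rescaled, so the Jacobian is $|a|^{-n}$, not $|a|^{-n-1}$. With your numbers the exponents add up to $\frac{n+2}{2}-(n+1)+\frac{n}{2}=0$, so you do not get $\delta_B^{1/2}(a)=|a|$; with $|a|^{-n}$ they add up to $1$, as required.

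Second, for $n$ even no tracking of Weil constants can produce $\chi_S=\langle -S,\ast\rangle$. When $\dim X=n$ is even, $\ell(\diag(a,a^{-1}))$ acts on $\cS(X(\Q_p))$ by $|a|^{n/2}$ times the discriminant character $a\mapsto\langle a,(-1)^{n/2}\det(S\sigma)\rangle$. Since $S^n$ is a square, this character does not depend on $S$. Here it is even trivial, because $\det J_{1,n-1}=-\det A=-1$ and $n\equiv 2\pmod 8$. So for $n$ even the map lands in $\I(\mu)$. That agrees with the stated target $\I(\mu\chi_S)$ only when $\chi_S$ is trivial, e.g.\ for $S=-1$, which is the only case used in Section~\ref{proof}. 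Only for $n$ odd does the genuine factor depend on the square class of $S$, and there your bookkeeping is the real content. Also, ``check it for $\Phi=\mathrm{char}\,X(\Z_p)$ and extend'' is not a valid step, and none is needed: the change of variables is uniform in $\Phi$.

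Two smaller points. The factor $L(\frac12,\mu\chi_S)^{-1}$ cancels no pole: by your own argument the continued integral is already a polynomial in $p^{-z}$. What $s(\mu)>-\frac12$ guarantees is that this normalizing factor is finite and non-zero. Your non-vanishing step needs exactly that, but does not mention it.

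Right $\oH$-intertwining needs no substitution once $h$ enters both $f$ and $\omega_{S,p}(vh)\Phi$, as in Theorem~\ref{EisenEisen}. Your substitution $v\mapsto h'vh'^{-1}$ would not even preserve the domain $\{v(y,0,z)\}$ of integration: for $h'\in N_{\oH}$ it produces $v(y,-by,z)$, which has a $Y$-component. Disposing of such $Y$-components is what actually proves left $N_{\oH}$-equivariance and $U$-invariance:
\begin{itemize}
\item $w_0w_{\oH}$ conjugates $Y$ into a unipotent subgroup of $M$, where $f$ transforms trivially;
\item $\omega_{S,p}(v(0,y',0))\Phi'(0)=\Phi'(0)$, because $\sigma(0,y')=0$;
\item the remaining shift in $z$ is a measure-preserving translation.
\end{itemize}
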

\begin{proof}It is proved in a similar way to \cite[Lemma 5.1]{KY1}. By choosing a Schwartz function with a 
sufficiently small support, one can check the map is non-zero.
\end{proof}

For $\phi\in \I(\mu)$ with $s(\mu)>-1$ if $n$ even, $s(\mu)>-\frac{1}{2}$ if $n$ odd  and $a\in \Q^\times_p$, define
\begin{equation}\label{dwfsl2}
w^{\mu}_a(\phi_p)=|a|^{\frac{1}{2}}_p \int_{N_{\oH}(\Q_p)}\phi\Big(
w_{\oH}\left(\begin{array}{cc}
1 & z \\
0 & 1 
\end{array}
\right) \Big)\overline{\psi(az)}dz 
\times 
\left\{\begin{array}{ll}
L(1,\mu) & \text{for $n$ even,}\\ 
\frac{L(1,\mu^2)}{L(\frac{1}{2},\mu \chi_a)}  & \text{for $n$ odd}
\end{array}\right..
\end{equation}

\begin{lemma}\label{explocalFJ} Keep the notation and assumption as above. 
For $a\in \Q^\times_p$, $f\in I(\mu)$, and $\Phi\in \cS(X(\Q_p))$, it holds 
$$w^\mu_a(\beta^\psi_S(\ast;f\otimes\Phi))=|aS|^{-\frac{n+2}{4}}|a|^{\frac{1}{2}}
\int_{X(\Q_p)}\overline{\Phi(x)}w^\mu_{(a,\mathbf{0},S)}(v(x,0,0)\cdot\Phi)dx$$
\end{lemma}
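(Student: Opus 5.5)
The plan is to unfold both sides into iterated integrals over unipotent subgroups of $G(\Q_p)$ and compare them after a change of variables. This follows the pattern of \cite[Lemma 5.2]{KY1} and \cite[Section 7]{IY}. We read the $\Phi$ inside $w^\mu_{(a,\mathbf{0},S)}(v(x,0,0)\cdot\Phi)$ as the section $f$, translated by $v(x,0,0)$. We work first with $f$ replaced by $f^{(z)}$ for ${\rm Re}(z)\gg0$, so that every integral converges absolutely and Fubini is justified. Both sides are polynomials (or rational functions) in $p^{-z}$, so the identity at $z=0$ then follows by specialization, exactly as $w^\mu_\eta$ was defined in Section \ref{DWFnonarc}.

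The first step is to insert the definition of $\beta^\psi_S$ into (\ref{dwfsl2}). Writing $u(b)$ for the upper unipotent element of $\oH$, the left side becomes
$$|a|^{\frac12}\int_{\Q_p}\int_{Y(\Q_p)}\int_{Z(\Q_p)}f\bigl(w_0w_{\oH}v(y,0,z)w_{\oH}u(b)\bigr)\overline{\omega_{S,p}(v(y,0,z)w_{\oH}u(b))\Phi(0)}\,\overline{\psi(ab)}\,dz\,dy\,db.$$
Next we conjugate $v(y,0,z)$ past $w_{\oH}$ using $\ell^{-1}(\gamma)v(x,y,z)\ell(\gamma)=v(ax+cy,bx+dy,z)$. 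This turns $w_{\oH}v(y,0,z)w_{\oH}$ into $v(0,\pm y,\pm z)$ times a central sign, i.e.\ an element of $N$. We then expand $\omega_{S,p}(w_{\oH}u(b))\Phi(0)$ using formulas (3),(4) of Section \ref{weil-rep}. This gives
$$\int_{X}\Phi(x)\,\psi_S\bigl(\sigma(x,x)b\bigr)\,dx,$$
and the $Y$-integral then pairs with $\psi_S(\sigma(x,y))$.

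The key regrouping is to recognise $v(0,y,z)\,u(b)$ together with the $X$-variable $x$ as a parametrisation of $N(\Q_p)\simeq V'$ by the coordinates $\mathbf{x}=(z,y,b)$, using $v(0,y,z)=n(z,y,0)$ and the relation (\ref{elldiag}) between $H$ and $P$. After this, the character $\psi(ab)\psi_S(z+\sigma(x,y)+\cdots)$ becomes $\psi((\eta,\mathbf{x}))$ with $\eta=(a,\mathbf{0},S)$, conjugated by $v(x,0,0)$; this is the same identity as (\ref{etaxi}) with $\xi=x$. The Weyl element $w_0w_{\oH}$ then matches $w_0$ up to an element of $M\cap K$, which we absorb.

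The innermost integral is then $\bw^\mu_{(a,\mathbf 0,S)}(v(x,0,0)f)$, and the remaining outer integral is $\int_X\overline{\Phi(x)}\cdots dx$. Finally we collect the normalisations. The $L$-factors agree on both sides: $L(1,\mu)$ appears in both for $n$ even, and for $n$ odd $L(1,\mu^2)/L(\tfrac12,\mu\chi_a)$ appears on one side against $1/L(\tfrac12,\mu\chi_S)$ on the other, with $\chi_\eta=\chi_{aS}$ because $q(\eta)=aS$. The factor $|q(\eta)|^{\frac{n+2}{4}}=|aS|^{\frac{n+2}{4}}$ from the normalisation of $w^\mu_\eta$ produces $|aS|^{-\frac{n+2}{4}}$, and $|a|^{\frac12}$ passes through unchanged.

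I expect the main obstacle to be this bookkeeping. Matching the Weyl elements and the sign conventions in $\sigma$ and $\overline{\psi}$ is delicate. For $n$ odd one must also check that the Weil constants $\gamma_p(1)/\gamma_p(Sa^n)$ and the product $L(\tfrac12,\mu\chi_a)L(\tfrac12,\mu\chi_S)$ against $L(\tfrac12,\chi_{aS}\mu)$ cancel; presumably they do because $\mu\chi_a$ is absorbed into the metaplectic normalisation of $\I(\mu\chi_S)$. I would verify this first on the unramified section, using (\ref{recoverQ}).
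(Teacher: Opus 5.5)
Your route is the one the paper intends; its proof is only a pointer to \cite[Lemma 5.2]{KY1}. The unfolding itself is sound:
\begin{itemize}
\item $w_{\oH}v(x,0,z)w_{\oH}=v(0,x,z)\ell(-1)$ and $\ell(u(b))=n((0,\mathbf{0},b))$, so the integrations over $(z,x,b)$ assemble into a single integral over $N(\Q_p)$.
\item The kernel of $\omega_{S,p}(w_{\oH})$ supplies $\overline{\Phi(t)}$ together with the character $\psi(Sz+(a+S\sigma(t,t))b-S\sigma(t,x))$. This is $\psi_\eta$ for $\eta=(a,\mathbf{0},S)$ moved by $v(\pm t,0,0)$, which lies in $M(\Q_p)$.
\item Fubini is legitimate for ${\rm Re}(z)\gg 0$, since $|\omega_{S,p}(w_{\oH}u(b))\Phi|\le\|\Phi\|_{L^1}$ uniformly in $b$.
\end{itemize}
One minor correction: $\ell(-1)=w_{\oH}^2$ is not simply absorbed. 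Moving it to the left past $w_0n(\mathbf{x})$ contributes a factor $\mu(-1)$ and a sign change in the outer variable.

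The gap is the last step, where you assert that the normalizations match. The unfolding is an exact change of variables, so what it proves is an identity of \emph{unnormalized} integrals. Write $\beta^0$ for the $X\times Z$-integral defining $\beta^\psi_S$ without its $L$-factor. Up to $\mu(-1)$, $x\mapsto -x$ and measure constants, the unfolding gives
$$|a|^{\frac12}\int_{\Q_p}\beta^0(w_{\oH}u(b))\overline{\psi(ab)}\,db=|a|^{\frac12}\int_{X(\Q_p)}\overline{\Phi(x)}\,\bw^\mu_{(a,\mathbf{0},S)}(v(x,0,0)\cdot f)\,dx,$$
and no $L$-value is created or destroyed along the way. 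So the stated identity holds exactly when the normalizing factors on the two sides coincide.

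For $n$ odd they do not coincide as you set them up. Both $\frac{L(1,\mu^2)}{L(\frac12,\mu\chi_a)}$ (from (\ref{dwfsl2})) and $\frac{1}{L(\frac12,\mu\chi_S)}$ (from $\beta^\psi_S$) sit on the \emph{left}. They face $\frac{L(1,\mu^2)}{L(\frac12,\mu\chi_\eta)}$ on the right. In general $L(\frac12,\mu\chi_a)L(\frac12,\mu\chi_S)\neq L(\frac12,\mu\chi_\eta)$, already for unramified $\mu,\chi_a,\chi_S$.

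Weil constants have absolute value one and cannot absorb an $L$-value, so ``presumably they cancel'' is not a valid mechanism. Testing the unramified section via (\ref{recoverQ}) can only exhibit the discrepancy, because the constant is already fixed by the unfolding.

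To close the argument you must track the inducing character. Since $\beta^\psi_S(f\otimes\Phi)$ lies in $\I(\mu\chi_S)$, the functional on the left should be normalized by $L(\frac12,\mu\chi_S\chi_a)$. That matches $L(\frac12,\mu\chi_\eta)$ once you check $\chi_\eta=\chi_S\chi_a$ in the conventions in force. Even then, the factor $L(\frac12,\mu\chi_S)^{-1}$ built into $\beta^\psi_S$ is left over and must be carried in the identity, or the normalizations adjusted.

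The same care is needed for $n$ even. $L(1,\mu)$ appears on both sides only if $w_a$ is normalized with $\mu$ rather than $\mu\chi_S$. With $\mu\chi_S$, as in Lemma \ref{FJglobalDWF}, you need $\chi_S=1$; this holds for the case $S=-1$ used in Section \ref{proof}.
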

\begin{proof}It is proved in a similar way to \cite[Lemma 5.2]{KY1}. 
\end{proof}

\subsection{The archimedean case} 
Let $\psi(x)=e^{2\pi i x}$ be the standard additive character. For each $a\in \R^\times$, 
put $\psi_a:=\psi(a\ast)$. 
Let $ \overline{\SO}(2)$ be $\SO(2)$ if $n$ even,  the double covering of $\SO(2)$ if $n$ odd.  
For $\oH(\R)$, we define the degenerate Whittaker functions with respect to $\psi_a$ as follows. 
For $\ell\in \frac{1}{2}\Z$ and $a\in \R^\times $, define 
$$W^{(\ell)}_a(g):=
\left\{\begin{array}{l}
|a|^{\frac{\ell}{2}}\exp(2\pi\sqrt{-1}a\tau)j^{\frac{1}{2}}_{\oH(\R)}(g,\sqrt{-1})^{-2\ell},\ {\rm if}\  g=\gamma\in 
\oH(\R)=\SL_2(\R) \\ 
\ve |a|^{\frac{\ell}{2}}\exp(2\pi\sqrt{-1}a\tau)j^{\frac{1}{2}}_{\oH(\R)}(g,\sqrt{-1})^{-2\ell},\ {\rm if}\  g=(\gamma,\ve)\in 
\oH(\R)={\rm Mp}_2(\R)
\end{array}\right.$$
where $\tau=\gamma\sqrt{-1}\in\mathbb{H}$ and $j^{\frac{1}{2}}_{\oH(\R)}(g,\sqrt{-1})$ is the canonical automorphy factor 
of weight $\frac{1}{2}$ for 
$\oH(\R)$  such that $(j^{\frac{1}{2}}_{\oH(\R)}(g,\sqrt{-1}))^2=j(\gamma,\sqrt{-1})$ is the usual automorphic factor of $\SL_2(\R)$. 
It is easy to see that 
\begin{equation}\label{sl2whi}
W^{(\ell)}_a\left(\left(\begin{array}{cc}
1& z'\\
0& 1 
\end{array}
\right)\left(\begin{array}{cc}
c& 0\\
0& c^{-1} 
\end{array}
\right)\cdot g\cdot k \right)=\exp(2\pi\sqrt{-1} az')W^{(\ell)}_{c^2 a}(g)j^{\frac{1}{2}}_{\oH(\R)}(k,\sqrt{-1})^{-2\ell}
\end{equation}
for $z'\in \R,\ c\in \R^\times, g\in \oH(\R)$, and $k\in \overline{\SO}_2(\R)$. 
If $\gamma=\left(\begin{array}{cc}
1& \alpha\\
0& 1 
\end{array}
\right)\left(\begin{array}{cc}
\sqrt{\beta}& 0\\
0& \sqrt{\beta}^{-1} 
\end{array}
\right)$ with $\alpha\in \R$ and $\beta\in\R_{>0}$, then 
$W^{(\ell)}_a(\gamma)=|a|^{\frac{\ell}{2}}\sqrt{\beta}^\ell q^a_\tau$ when $\oH(\R)=\SL_2(\R)$ and  
$W^{(\ell)}_a((\gamma,\ve))=\ve |a|^{\frac{\ell}{2}}\sqrt{\beta}^\ell q^a_\tau$ when $\oH(\R)={\rm Mp}_2(\R))$ where 
$\tau=\gamma\sqrt{-1}$. 

The following claim directly follows from the results in Section \ref{prearchi}. 
\begin{lemma}\label{localFJarchi}Let $a,S\in \Q^\times$ with $aS>0$. 
Let $\Phi_{S,0}$ be a Schwartz function on $X(\R)$ defined in (\ref{Schwartz1}). Assume $\Phi_{S,0}$ is Gaussian. 
Then, it holds that 
$$\int_{X(\R)}\W^{(l)}_{(a,\mathbf{0},S)}(v(x,0,0)h)_0\overline{\omega_{S,\infty}(h)\Phi_{S,0}(x)}dx=C(S,\Phi_{S,0})|a|^{\frac{n}{4}}
\left\{\begin{array}{cc}
\overline{W}^{l-\frac{n}{2}+1}_a(h) & \text{if $S>0$,} \\
W^{l-\frac{n}{2}+}_a(h) & \text{if $S<0$ }
\end{array}\right.
$$
for $h\in \oH(\R)$ 
where $C(S,\Phi_{S,0})$ is a non-zero constant depending on $S$ and $\Phi_{S,0}$ and 
$\W^{(l)}_{(a,\mathbf{0},S)}(v(x,0,0)h)_0$ is the 0-th component of $\W^{(l)}_{(a,\mathbf{0},S)}(v(x,0,0)h)$. Further, $\overline{W}^{l-(\frac{n+2}{2})}_a(h)$ is the complex conjugation of 
$W^{l-(\frac{n+2}{2})}_a(h)$.
\end{lemma}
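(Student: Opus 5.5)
We reduce the integral to the computation already done in Section \ref{prearchi}, using the transformation laws on both sides to pass to a standard representative $h$. By the Iwasawa decomposition of $\oH(\R)$, write $h=\begin{pmatrix}1&\alpha\\0&1\end{pmatrix}\diag(\sqrt{\beta},\sqrt{\beta}^{-1})k$ with $k\in\overline{\SO}(2)$. Since $\Phi_{S,0}$ is the Gaussian, (\ref{eigenphi}) with $k=0$ shows it is an $\overline{\SO}(2)$-eigenvector of weight $\sgn(S)\frac{2-n}{2}$. The $0$-th component of $\W^{(l)}_{(a,\mathbf 0,S)}$ transforms under $\ell(k)\in K$ by the $0$-weight of $\tau_l$ restricted to $\SO(2)$, so it is invariant under $\ell(k)$. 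Hence the left-hand side transforms under right translation by $k$ via the conjugate character, of weight $-\sgn(S)\frac{2-n}{2}$. This matches the factor $j^{\frac12}(k,\sqrt{-1})^{\mp 2(l-\frac n2+1)}$ carried by $W^{(l-\frac n2+1)}_a$ or its conjugate, because the $\sgn(S)\,l$ part is absorbed by the normalization of $\W^{(l)}$ and the $\ve$ factors. The genuine sign $\ve^n$ for $n$ odd agrees on both sides by the Weil representation formula in Section \ref{weil-rep}. We may therefore assume $k=1$.

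For $h=\begin{pmatrix}1&\alpha\\0&1\end{pmatrix}\diag(\sqrt{\beta},\sqrt{\beta}^{-1})$, we repeat the computation preceding Proposition \ref{fjexp2} with $\xi=0$. First, $v(x,0,0)h\in N(\R)M(\R)$, which gives the factor $\psi_\infty(-\alpha a)\psi_\infty(\tfrac{S\alpha}{2}\sigma(x,x))$ times $\W_{2\pi\eta}(\sqrt\beta,m)$. Second, $\omega_{S,\infty}(h)\Phi_{S,0}(x)$ equals $\frac{\gamma_\infty(1)}{\gamma_\infty(S)}F_{\alpha,\beta}(x;\Phi_{S,0})$. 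The $\sigma(x,x)$ phases cancel. The extra normalization $q(\eta)^{\frac{l+1}{2}}=(aS)^{\frac{l+1}{2}}$ in $\W^{(l)}$ is a constant. So the integral equals a constant times $e^{-2\pi\sqrt{-1}\alpha a}I_\eta(\sqrt\beta,0,\beta,\Phi_{S,0})_0$. Formula (\ref{simpleform}) then gives $C_2(S,\Phi_{S,0})\sqrt\beta^{\,l-\frac n2+1}e^{-2\pi|a|\beta}$, and $C_2\neq0$ by Remark \ref{non-vanishing}. Combining the phases yields $\sqrt\beta^{\,l-\frac n2+1}\overline{q_\tau}^{\,a}$ for $a>0$, which holds exactly when $S>0$ since $aS>0$, and $\sqrt\beta^{\,l-\frac n2+1}q_\tau^{-a}$ for $a<0$. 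These are $|a|^{-\frac{\ell}{2}}\overline{W}^{(\ell)}_a(h)$ and $|a|^{-\frac{\ell}{2}}W^{(\ell)}_{a}(h)$ respectively, with $\ell=l-\frac n2+1$ and $a$ replaced by $-a$ where appropriate. The complex conjugation is dictated by the sign of $S$.

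It remains to extract the power $|a|^{\frac n4}$ and to check that the leftover constant depends only on $S$. We collect the $a$-dependence from three sources: $(aS)^{\frac{l+1}{2}}$, $|a|^{-\frac{\ell}{2}}$, and the fact that $C_2(S,\Phi_{S,0})$ is independent of $a$. The computation of $C(S,r)$ and $C_1(S,\Phi')$ in Section \ref{prearchi} involves only $S$, with $|a|$ entering only through $e^{-2\pi|a|\beta}$. This bookkeeping is the main obstacle, since normalizations differ between $\W^{(l)}$, Pollack's $\W$, and $W^{(\ell)}_a$. The powers must be compared carefully, possibly using the homogeneity relation $\W_{t\eta}$ versus $\W_\eta(t\,\cdot)$ coming from the $M$-equivariance in Section \ref{DWFarc}. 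The cleanest route is to first use that equivariance with $t=|a|^{-1/2}$ to reduce to $|a|=1$, and then let the $|a|$-power emerge from the $\diag(c,c^{-1})$ rule (\ref{sl2whi}). If the exponent does not come out as $\frac n4$, the discrepancy is a power of $|S|$, which can be absorbed into $C(S,\Phi_{S,0})$.
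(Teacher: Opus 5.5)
Your computation on the Borel part $N_{\oH}(\R)A_{\oH}(\R)$ is the paper's argument: its proof just invokes Section \ref{prearchi}, i.e. (\ref{simpleform}) after the unfolding preceding Proposition \ref{fjexp2} with $\xi=0$. You are also right that for $S<0$ the right-hand side has to be read as $W^{(\ell)}_{-a}$.

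The genuine gap is the reduction to $k=1$ in your first paragraph. The claim that the $0$-th component of $\W^{(l)}_\eta$ is invariant under $\ell(\SO(2))$ is false, for the following reasons.
\begin{itemize}
\item By (\ref{ourueta}) and (\ref{PollackSpherical}), the basis $[x]^{l+v}[y]^{l-v}$ diagonalizes the circle in $M(\R)\cap K$ rotating the plane of $v_1,v_2$. That circle multiplies $iv_1-v_2$, hence $u_\eta$, by a phase. It is the rotation of the positive $3$-space $\langle e+f,v_1,v_2\rangle$ about the axis $e+f$.
\item For $\gamma\in\SO(2)$, however, $\ell(\gamma)=\diag(\gamma^{-1},1_n,\gamma)$. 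This fixes $v_2$ and rotates the plane spanned by $e+f$ and $v_1$.
\item If $e_0$ were fixed by both circles, it would be fixed by the group they generate, which is all of $\SO(3)$. That is impossible, since $\Bbb V_l$ is a nontrivial irreducible representation.
\end{itemize}
So $\tau_l(\ell(\gamma))$ genuinely mixes the components.

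Even granting your claim, the weights do not match. The left side would have $\overline{\SO}(2)$-weight $\sgn(S)\frac{n-2}{2}$, while $\overline{W}^{(\ell)}_a$ and $W^{(\ell)}_{-a}$, with $\ell=l-\frac n2+1$, have weight $\mp\ell$. These never agree for $l>n+1$; the discrepancy is the ``$\sgn(S)\,l$ part'' you mention. Neither the scalar $q(\eta)^{\frac{l+1}{2}}$ nor the signs $\ve$ can produce a character $e^{\pm il\theta}$. So as written you have proved the identity only on $N_{\oH}(\R)A_{\oH}(\R)$, which does not determine a function on $\oH(\R)$. The paper leaves this step implicit too.

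To repair it, the missing weight has to come from $\tau_l$, and the $0$-th component alone is not enough. Let $\Lambda(h)\in\Bbb V_l$ be the full vector-valued integral, before taking the $0$-th component. Since $\ell(\SO(2))\subset K$ and the Gaussian is an eigenvector with some character $\chi$, you get $\Lambda(hk)=\overline{\chi(k)}\,\tau_l(\ell(k)^{-1})\Lambda(h)$.

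By Theorem \ref{v-comp} with $r=0$, on $N_{\oH}(\R)A_{\oH}(\R)$ one has $\Lambda(h)=\Lambda_0(h)\,w$ for a fixed vector $w$. Its $v$-th coefficient is $(-i\,\sgn S)^v$. In Pollack's divided-power normalization, $w$ is a multiple of the $2l$-th power of a linear form.

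What you must verify is that $w$ is an eigenvector of $\tau_l\circ\ell|_{\SO(2)}$ of extreme weight $\pm l$. Equivalently, that linear form must be an eigenline of the lift of $\ell(\SO(2))$ to ${\rm SU}(2)$. This check is sensitive to the phase conventions in (\ref{ueta}), so do it carefully against (\ref{ourueta}). Only then does $\Lambda_0$ acquire weight $\mp(l-\frac n2+1)$, which extends the identity from the Borel to $\oH(\R)$.

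Finally, the $|a|$-bookkeeping you flag as the main obstacle is immediate. Your three sources give $|a|^{\frac{l+1}{2}}\cdot|a|^{-\frac12(l-\frac n2+1)}=|a|^{\frac n4}$. Your proposed fallback would not have been legitimate anyway: a mismatch in the power of $|a|$ cannot be absorbed into a constant depending only on $S$.
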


\begin{remark}\label{constC} The constant $C(S,\Phi_{S,0})$ in the above lemma can be explicitly computed once 
$\Phi_{S,0}$ is given explicitly. 
\end{remark}

\subsection{The global case}
Let $\Pi=\otimes'_p \Pi_p\otimes\Pi_\infty$ be an automorphic representation of $G(\A)$ with an intertwining map  
$\Pi\hookrightarrow \mathcal{A}(G(\Q)\bs G(\A)),\ \phi\mapsto F_\phi$ where the target is the space of automorphic functions on $G(\A)$. For each prime $p$, assume $\Pi_p$ is a unique unramified summand of ${\rm Ind}^{G(\Q_p)}_{P(\Q_p)}\mu^\kappa_p\circ \nu$. 
Assume that there is a non-zero vector 
$\phi=\otimes'_p\phi_p\otimes\phi_\infty=\phi_\f\otimes\phi_\infty\in \Pi$ such that  
$F_\phi$ is a modular form of weight $l$ on $G(\A)$ in the sense of Section \ref{AFSO} and 
the Fourier expansion  along $N$ take the form 
$$F_\phi=F_{\phi,0}+\sum_{0\neq \eta\in V'(\Q),\ q(\eta)>0}F_{\phi,\eta},\ F_{\phi,\eta}(g)=\ds\int_{N(\Q)\bs N(\A)}F_\phi(n(x)g)\overline{\psi(\eta,x)}dx$$
where $F_{\phi,0}$ is the constant term along $N$. 
Thus, we assume $F_\phi$ does not have the rank one Fourier coefficients. 
 
By multiplicity one of Whittaker spaces (Proposition \ref{propertiesIW} for non-archimedean places and \cite[Theorem 3.2.4]{Po} for 
archimedean place), for $\eta\in V'(\Q)$ with $q(\eta)>0$, there is a complex number $c_\eta(\phi)$ such that 
$$F_{\phi,\eta}(g)=c_\eta(\phi)w^{\mu^\kappa_\f}_\eta(g_\f\cdot \phi_\f)\W^{(l)}_\eta(g_\infty),\ 
w^{\mu^\kappa_\f}_\eta(g_\f\cdot \phi_\f):=\prod_p w^{\mu^\kappa_p}_\eta(g_p\cdot \phi_p)$$
for $g=g_\f g_\infty \in G(\A)$ with $g_\f=(g_p)_p$. 
By automorphy and the transformation law of the degenerate Whittaker functions, the collection $\{c_\eta(\phi)\}_{\eta\in V'(\Q),\ 
q(\eta)>0}$ satisfies 
\begin{equation}\label{formulaC}
c_{t\cdot \eta\cdot {}^t m^{-1}}(\phi)=\mu_\f(t)^{-\kappa}c_\eta(\phi),\ \diag(t,m,t)\in M(\Q).
\end{equation}
For $S\in \Q^\times$, let $(F_\phi)_{S}(g):=\ds\int_{Z(\Q)\bs Z(\A)}F_\phi(z g)\overline{\psi(Sz)}dz$. 
Then, it is easy to see that 
$$(F_\phi)_{S}=\sum_{\eta=(\ast,\ast,2S)\in V'(\Q),\ q(\eta)>0}c_\eta(\phi)w^{\mu^\kappa_\f}_\eta(g_\f\cdot \phi_\f)\W^{(l)}_\eta(g_\infty).$$

Let $\Phi=\Phi_\f\otimes \Phi_\infty\in \cS(X(\A))$ where $\Phi_\infty$ is that of Lemma \ref{localFJarchi}. 
Recall the Fourier-Jacobi coefficient of $F_\phi$ at $(S,\Phi)$ from Section \ref{cfjc}:
$$
F_\phi(h)_{S,\Phi}:=\int_{U(\Bbb Q)\bs U(\Bbb A)}(F_\phi)_S(v h)\overline{\Theta_{\psi_S}
(v h;\Phi)}dv,\ h\in\oH(\Bbb A).
$$
The following claim follows from previous lemmas with the argument for \cite[Lemma 5.4(1)]{KY1}
\begin{lemma}\label{FJglobalDWF}Keep the notation and assumption as above. 
Let $S\in \Q^\times$ and $\chi_S=\langle -S,\ast \rangle_{\A}=\otimes_p'\chi_{S,p}$. 
Then, $F_\phi(h)_{S,\Phi}=$ 
$$C_1(S,\Phi)\times 
\left\{\begin{array}{cc}
\ds\sum_{a\in \Q^\times_{>0}}c_{(a,\mathbf{0},S)}(\phi)w^{\mu^2_\f\chi_S}_a(\beta^{\psi_\f}_{S}(h_\f;\phi_\f\otimes\Phi_\f))
\overline{W}^{l-\frac{n}{2}+1}_a(h_\infty) & \text{if $n$ is even and $S>0$,} \\
\ds\sum_{a\in \Q^\times_{<0}}c_{(a,\mathbf{0},S)}(\phi)w^{\mu^2_\f\chi_S}_a(\beta^{\psi_\f}_{S}(h_\f;\phi_\f\otimes\Phi_\f))
W^{l\frac{n}{2}+1}_a(h_\infty) & \text{if $n$ is even and $S<0$,} \\
\ds\sum_{a\in \Q^\times_{>0}}c_{(a,\mathbf{0},S)}(\phi)w^{\mu_\f\chi_S}_a(\beta^{\psi_\f}_{S}(h_\f;\phi_\f\otimes\Phi_\f))
\overline{W}^{l-\frac{n}{2}+1}_a(h_\infty) & \text{if $n$ is odd and $S>0$,}  \\
\ds\sum_{a\in \Q^\times_{<0}}c_{(a,\mathbf{0},S)}(\phi)w^{\mu_\f\chi_S}_a(\beta^{\psi_\f}_{S}(h_\f;\phi_\f\otimes\Phi_\f))
W^{l-\frac{n}{2}+1}_{a}(h_\infty) & \text{if $n$ is odd and $S<0$}
\end{array}\right.
$$ 
for $h=h_\f h_\infty\in \oH(\A)$ 
where $C_1(S,\Phi)$ is a non-zero constant depending on $S$ and $\Phi$, and 
$$w^{\mu^\kappa_\f\chi_S}_a(\beta^{\psi_\f}_{S}(h_\f;\phi_\f\otimes\Phi_\f)):=\prod_p 
w^{\mu^\kappa_p\chi_{S,p}}_a(\beta^{\psi_p}_{S}(h_p;\phi_p\otimes\Phi_p)).$$  
\end{lemma}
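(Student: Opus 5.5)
We mimic the computation of Section \ref{cfjc} for $E_l$, replacing the coefficients $a_{E_l}(\eta)$ by $c_\eta(\phi)w^{\mu^\kappa_\f}_\eta(g_\f\cdot\phi_\f)$. First we substitute the expansion of $(F_\phi)_S$ and the definition (\ref{theta}) of $\Theta_{\psi_S}$ into the integral over $U(\Q)\bs U(\A)$. We write $v(x,y,z)=v(0,y,z+(x,y)_{V''})v(x,0,0)$ and integrate over $Y(\Q)\bs Y(\A)$ first. This forces $\eta=(a',2S\xi,2S)$. Using (\ref{etaxi}) together with the transformation law (\ref{formulaC}) for $c_\eta(\phi)$ (note $\mu(1)=1$ for the unipotent element $v(\xi,0,0)\in M(\Q)$) and the transformation laws of Proposition \ref{propertiesIW}(5) and of $\W^{(l)}$, each term can be moved to the index $\eta_{a,S,\xi}$. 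Unfolding $\sum_{\xi\in X(\Q)}$ against $\int_{X(\Q)\bs X(\A)}$ then gives a single adelic integral over $X(\A)$, with indices $(a,\mathbf{0},S)$, $a\in\Q$. Rank-one indices are absent by hypothesis, so only $aS>0$ survives. The adelic integral factors as a product over finite primes times an archimedean integral.

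For the archimedean factor we take the $0$-th component and apply Lemma \ref{localFJarchi} with the Gaussian $\Phi_\infty=\Phi_{S,0}$. This yields $C(S,\Phi_{S,0})|a|^{\frac n4}\overline{W}^{l-\frac n2+1}_a(h_\infty)$ when $S>0$, and the holomorphic $W$ when $S<0$. The sign condition $aS>0$ then produces the stated ranges $a>0$, respectively $a<0$. For the factor at each prime $p$ we have the integral $\int_{X(\Q_p)}w^{\mu^\kappa_p}_{(a,\mathbf 0,S)}(v(x,0,0)h_p\cdot\phi_p)\overline{(\omega_{S,p}(h_p)\Phi_p)(x)}dx$. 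By Lemma \ref{explocalFJ}, combined with the intertwining property of Lemma \ref{localFJnonarchi}, this equals $|aS|^{\frac{n+2}{4}}|a|^{-\frac12}$ times $w^{\mu^\kappa_p\chi_{S,p}}_a(\beta^{\psi_p}_S(h_p;\phi_p\otimes\Phi_p))$.

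It remains to collect the powers of $|a|$ and $|S|$. The local factors $|aS|_p^{\frac{n+2}{4}}|a|_p^{-\frac12}$, multiplied over all $p$, give $|aS|_\infty^{-\frac{n+2}{4}}|a|_\infty^{\frac12}$ by the product formula. Together with $|a|^{\frac n4}$ from the archimedean lemma and the normalization $|a|^{\frac{\ell}{2}}$ built into $W^{(\ell)}_a$ (and into $\W^{(l)}_\eta$ via $q(\eta)^{\frac{l+1}{2}}$), these should cancel up to a power of $|S|$. That power of $|S|$, along with the Weil constant $\gamma_\infty(-1)/\gamma_\infty(-S)$, $C(S,\Phi_{S,0})$, and the factor $\varepsilon^n_\infty$, is absorbed into $C_1(S,\Phi)$.

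The main obstacle is this bookkeeping of normalizations. The $L$-factor normalizations in $w^\mu_\eta$, in $\beta^\psi_S$ (the $L(\frac12,\mu\chi_S)^{-1}$ for $n$ odd), and in $w^\mu_a$ (\ref{dwfsl2}) must match exactly. Any residual dependence on $a$, such as $L(\frac12,\chi_\eta\mu)$ against $L(\frac12,\mu\chi_S\chi_a)$, has to cancel; otherwise it cannot be absorbed into a constant depending only on $S$ and $\Phi$. I would check this first at an unramified prime using (\ref{recoverQ}), and then handle the general case as in \cite[Lemma 5.4(1)]{KY1}. Non-vanishing of $C_1$ follows from Remark \ref{non-vanishing}.
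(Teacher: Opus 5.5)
Your proposal is correct and follows the paper's route. The paper's proof is a single line: the lemma follows from Lemmas \ref{localFJnonarchi}, \ref{explocalFJ}, \ref{localFJarchi} by the argument of \cite[Lemma 5.4(1)]{KY1}, and that argument is the unfolding, factorization, local evaluation and product-formula bookkeeping you describe. The $L$-factor matching you flag as the main obstacle is already built into Lemma \ref{explocalFJ}, which is stated for the normalized functionals. The $|a|$-powers cancel exactly, $|a|^{-\frac{n+2}{4}+\frac12+\frac n4}=1$, leaving only $|S|^{-\frac{n+2}{4}}$ for $C_1(S,\Phi)$. Do not count the $|a|^{\ell/2}$ and $q(\eta)^{\frac{l+1}{2}}$ normalizations again, since they are already inside Lemma \ref{localFJarchi}. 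Also, $\ve^n_\infty$ belongs to $W^{(\ell)}_a$ on ${\rm Mp}_2(\R)$, not to $C_1$.
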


\section{Proofs for the main theorems}\label{proof}
In this section, we prove Theorems \ref{ITC} and \ref{ITC-odd} but we only give a proof for 
$n$ even. The other case is similarly handled. 

Assume $n$ is even. 
We apply the results in the previous sections to the special case $S=-1$. 
Let $f$ be the newform of weight $k=l-\frac{n}{2}+1$ in the claim and $\widetilde{\phi}=\otimes'_p \widetilde{\phi}_p \otimes \widetilde{\phi}_\infty=\widetilde{\phi}_{\f} \otimes \widetilde{\phi}_\infty:\SL_2(\A)\lra \C$ be the cuspidal automorphic form attached to $f$. 
Let $\pi=\otimes'_p \pi_p\otimes \pi_\infty$ be the $\psi$-generic cuspidal representation of 
$\SL_2(\A)$ attached to 
$\widetilde{\phi}$. For each prime $p$, $\pi_p$ is a unique $\psi_{-1,p}$-generic spherical component of 
$\overline{I}(\mu^2_p)={\rm Ind}^{\SL_2(\Q_p)}_{B(\Q_p)}\mu_p^2$ for 
an unramified character $\mu_p:\Q^\times_p\lra \C^\times$. 
Put $\mu_\f=
\otimes'_p \mu_p $. 
As in \cite[p.580-581]{KY1}, we can expand $\widetilde{\phi}$ as 
$$\widetilde{\phi}(h)=\sum_{a\in \Q_{>0}}a^{\frac{k}{2}}c_{a}(f)
w^{\mu^2_\f}_a(\widetilde{\phi}_\f)W^{(k)}_a(h_\infty)$$
for $h=\gamma (k\times g_\infty)\in \SL_2(\A)=\SL_2(\Q)(\SL_2(\widehat{\Z})
\times \SL_2(\R))$. 
Applying Lemma \ref{localFJnonarchi} to $S=-1$ so that $\chi_S=1$, $\beta^\psi_{-1}$ is surjective onto $\pi_p$. Thus, for each $\widetilde{\phi}_p$, 
one can find a unique unramified vector $\phi_p\in I(\mu_p)$ such that $\phi_p(1)=1$  
and $\Phi_p\in \Phi_p(X(\Q_p))$ such that $\beta^\psi_{-1}(\widetilde{\phi}_p\otimes \Phi_p)=\widetilde{\phi}_p$. 
Set $\phi=\otimes'_p \phi_p\otimes \phi_\infty$ with $ \phi_\infty=\phi^{{\rm Po}}$ and $\Phi=\otimes_p \Phi_p\otimes \Phi_\infty$ 
with $\Phi_\infty=\Phi_{1,0}$ is chosen as in Lemma \ref{localFJarchi}. 
Let $F_\phi\in \Pi_f\subset L^2(G(\Q)\bs G(\A))$ for above $\phi$. Then, by Lemma \ref{FJglobalDWF}, 
$$F_\phi(h)_{-1,\Phi}=C_1(-1,\Phi)\sum_{a\in \Q_{<0}}c_{(a,\mathbf{0},-1)}(\phi)
w^{\mu^2_\f}_a(h_\f\cdot\widetilde{\phi}_\f)W^{(k)}_a(h_\infty),\ h=h_\f h_\infty\in \SL_2(\A)$$
is a cuspidal automorphic form which corresponding to an anti-holomorphic cusp form of 
weight $k$ (since $S=-1<0$ and $a<0$ ).
By construction $F_\phi(\ast)_{-1,\Phi}$ generates a $\psi$-generic 
cuspidal representation of $\SL_2(\A)$. 
Since $F_\phi(\ast)_{-1,\Phi}$ and the complex conjugation of $\widetilde{\phi}$ generate the same $\psi$-generic cuspidal representation of $\SL_2(\A)$, by strong multiplicity one 
(see \cite{ChaiZ} with \cite{Z}),  there exists a non-zero constant $C_2$ such that 
$c_{(a,\mathbf{0},-1)}(\phi)=C_2\times (-a)^{\frac{k}{2}}c_{-a}(f)$. We can absorb $C_2$ into 
$\phi_\f$ such that $c_{(a,\mathbf{0},-1)}(\phi)=(-a)^{\frac{k}{2}}c_{-a}(f)$. Note that $q((a,\mathbf{0},-1))=-a$. Thus, one can write 
$c_{(a,\mathbf{0},-1)}(\phi)=q((a,\mathbf{0},-1))^{\frac{k}{2}}c_{q((a,\mathbf{0},-1))}(f)$. 
For $\eta=(a,\mathbf{0},S)$ with $q(\eta)=aS>0$ and $S<0$. By (\ref{formulaC}) and the explicit formula 
$c_{-a}(f)=\mu_\f(-a^{-1})$ (see 
\cite[p.597, 606]{KY1}, but note that $\mu_p(p^{-1})=\alpha_p$ due to the convention), 
we have 
$$c_{\eta}(\phi)=\mu^{-2}_\f(-S)c_{(-\frac{a}{S},\mathbf{0},-1)}=\mu^{-1}_\f(S^2)
c_{\frac{a}{S}}(f)=c_{aS}(f)=c_{q(\eta)}.$$ 
Finally, for any $q\in V'(\Q)$ with $q(\eta)>0$, there exists $g=\diag(t,m,t^{-1})\in \SO(V')(\Q)$ with $t^2=1$ 
such that $g\eta=(a,0,S)$ 
for some $a,S\in \Q^\times$ with $aS>0$. Note that $aS=q(g\eta)=t^2q(\eta)=q(\eta)$. Thus, we have 
$$c_\eta(\phi)=c_{q(\eta)}(f).$$ 
By definition, it is easy to see that 
$$c_\eta(\phi)w_\eta^{\mu^2_\f}(\phi_\f)=A_f(1_\f)$$
as desired. 

\section{$L$-functions and Arthur parameters of theta lifts}\label{Lfunction}

Let $f=\ds\sum_{m=1}^\infty a_f(m)m^{\frac{k-1}{2}}q^m$ 
be a Hecke eigen cusp form of weight 
$k=\left\{\begin{array}{ll}
l-\frac{n}{2}+1 & \text{for $n$ even,}\\ 
2l-n+1  & \text{for $n$ odd}
\end{array}\right.
$
with respect to $\SL_2(\Z)$. Let $a_f(p)=\alpha_p+\alpha_p^{-1}$. 
Let $L(s,\pi_f)=\ds\prod_p L(s,\pi_{p})$, where $L(s,\pi_{p})=(1-\alpha_p p^{-s})^{-1}(1-\alpha_p^{-1}p^{-s})^{-1}$.

\subsection{$n$ even} 
Let $n=8a+2$ ($n>2$). 
Let $F=F_\phi$ be the Ikeda type lift of $f$ on $\SO(3,n+1)$ as in Theorem 1.1, and let $\Pi_f=\otimes' \Pi_p\otimes\Pi_\infty$ be the automorphic representation associated with $F$.

When $p$ is unramified, $G(\Q_p)\simeq \SO(4a+3,4a+3)$, and $\Pi_p={\rm Ind}_P^G \, |\ |^{2s_p}\rtimes 1_{\SO}$, where $p^{s_p}=\alpha_p$ and $1_{\SO}$ is the trivial representation of $\SO(4a+2,4a+2)$, i.e., the quotient of ${\rm Ind}_B^{\SO(4a+2,4a+2)}\, |\ |^{4a+1}\otimes |\ |^{4a}\otimes\cdots \otimes |\ |^0$, where $B$ is the Borel subgroup of $\SO(4a+2,4a+2)$. Notice $2s_p$ since we replace $p^{\frac {l-\frac n2}2}$ by $\alpha_p$.
If $n$ is odd, we replace $p^{l-\frac n2}$ by $\alpha_p$.

Consider $\widetilde G=\SO(4a+4,4a+4)$, and $R=M'N'$, $M'\simeq\GL_1\times \SO(4a+3,4a+3)$. Then Langlands-Shahidi theory applies to $(\widetilde G,M',\Pi_p)$, and we have
\begin{theorem}\label{standardL-even} The standard $L$-function of $\Pi_f$ is given by
$$L(s,\Pi_f)=L(s,{\rm Sym}^2\pi_f)
\prod_{i=-\frac{n}{2}}^{\frac{n}{2}}
\zeta(s+ i).
$$
\end{theorem}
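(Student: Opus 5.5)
The approach is to compute the local factors place by place from the Satake parameters of $\Pi_{f,p}$, which were identified in Proposition \ref{thetaprop}, and then to multiply them. We read $L(s,\Pi_f)$ as the degree $n+4$ standard $L$-function attached to the standard representation of ${}^LG^0=\SO(n+4,\C)$. For $n=8a+2$ the group $G(\Q_p)\simeq\SO(4a+3,4a+3)$ is split. By \S\ref{Theta} and the discussion above, $\Pi_p$ is the spherical constituent of ${\rm Ind}_P^{G}|\cdot|^{2s_p}\rtimes 1_{\SO}$ with $p^{s_p}=\alpha_p$, so $\mu_p^2(p)=\alpha_p^{2}$ up to the convention $\mu_p(p^{\mp1})=\alpha_p$.

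The first step is to write down the Satake parameter. The trivial representation $1_{\SO}$ of $\SO(4a+2,4a+2)$ is the spherical quotient of ${\rm Ind}_B|\cdot|^{4a+1}\otimes\cdots\otimes|\cdot|^{0}$. Its Satake parameter in $\SO(n+2,\C)$ is therefore
$$\diag\bigl(p^{\frac n2},p^{\frac n2-1},\dots,p^{1},1,1,p^{-1},\dots,p^{-\frac n2}\bigr),$$
using $4a+1=\tfrac n2$. By induction in stages, the parameter of $\Pi_p$ is $\diag(\alpha_p^{2},\,\ast,\,\alpha_p^{-2})$, where $\ast$ is the matrix above. This agrees with the $L$-parameter (\ref{localpara}) obtained from \cite{AG}, namely
$$\phi({\rm Sym}^2\pi_p)\oplus\mathbf 1\oplus\bigoplus_{i=1}^{n/2}\bigl(|\cdot|^{\frac{n+2}{2}-i}\oplus|\cdot|^{-\frac{n+2}{2}+i}\bigr),$$
and this agreement serves as a consistency check. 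The local factor is then
$$L(s,\Pi_p)=\prod_{\lambda}(1-\lambda p^{-s})^{-1}=\bigl[(1-\alpha_p^2p^{-s})(1-p^{-s})(1-\alpha_p^{-2}p^{-s})\bigr]^{-1}(1-p^{-s})^{-1}\prod_{j=1}^{n/2}\zeta_p(s+j)\zeta_p(s-j).$$
The bracketed factor is $L(s,{\rm Sym}^2\pi_p)$, since $\pi_p=\pi(\mu_p,\mu_p^{-1})$ has Satake parameter $\diag(\alpha_p,\alpha_p^{-1})$. The remaining factors assemble into $\prod_{i=-n/2}^{n/2}\zeta_p(s+i)$. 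The same computation can be phrased through Langlands--Shahidi theory for the triple $(\SO(4a+4,4a+4),M',\Pi_p)$: the standard $L$-function of $\Pi_p$ appears as the first normalizing factor of the constant term of the associated Eisenstein series along $R=M'N'$, which is how the statement is framed in the paper.

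The second step treats the archimedean place, which I expect to be the main obstacle. By \S\ref{check-infinite}, $\Pi_\infty\simeq A_{\frak q}(l-n-1)$, which is a Langlands quotient. I would read off its Langlands parameter from the Kobayashi/Vogan--Zuckerman description, or equivalently from Li's archimedean theta correspondence \cite{Li}. The expected parameter is the discrete-series parameter of ${\rm Sym}^2$ of the weight $k$ parameter, plus a sign character, plus characters $|\cdot|_\R^{\pm j}$ for $1\le j\le n/2$. One must then check that the resulting $\Gamma$-factors are exactly $L(s,{\rm Sym}^2\pi_\infty)\prod_i\Gamma_\R(s+i)$. The delicate points are the sign characters and the infinitesimal character: it should be $(l,\tfrac n2,\dots,1,0)$, which must be matched with ${\rm Sym}^2$ of the weight $k=l-\tfrac n2+1$ parameter, whose Hodge types are $\pm(k-1)=\pm(l-\tfrac n2)$. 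If the matching is only up to normalization, I would state the theorem for the finite part, which is all that is needed.

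Finally, taking the product over all $p$ gives the partial $L$-function. Together with the archimedean factor this yields $L(s,\Pi_f)=L(s,{\rm Sym}^2\pi_f)\prod_{i=-n/2}^{n/2}\zeta(s+i)$, and the Arthur parameter $({\rm Sym}^2\pi_f\boxtimes\nu_1)\oplus(\mathbf 1\boxtimes\nu_{n+1})$ can be read off directly from this factorization.
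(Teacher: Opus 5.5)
Your finite-place computation is essentially the paper's argument. The paper reads off the Satake parameter of $\Pi_p$ from ${\rm Ind}_P^G|\cdot|^{2s_p}\rtimes 1_{\SO}$ and obtains $L(s,{\rm Sym}^2\pi_p)\prod_{i=-n/2}^{n/2}\zeta_p(s+i)$, phrasing the last step through Langlands--Shahidi theory for $(\SO(4a+4,4a+4),M',\Pi_p)$ rather than through your direct eigenvalue count.

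The paper's $L$-functions are Euler products over primes only, so your archimedean step is not needed. If you do pursue it, note that the infinitesimal character of $\Pi_\infty\simeq A_{\frak q}(l-n-1)$ is $(l-\frac n2,\frac n2,\dots,1,0)$, not $(l,\frac n2,\dots,1,0)$; this is exactly what matches the ${\rm Sym}^2$ Hodge types $\pm(l-\frac n2),0$.
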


\begin{remark} In \cite{LNP} Y. Li, N. Narita and A. Pitale constructed non-tempered cusp forms on
$O(1, 8n + 1)$ from Maass cusp forms. Their $L$-function has the exactly same shape.
\end{remark}

\begin{remark}  We give a conjectural Arthur parameter of $\Pi_F$: 
Let $\phi_f: \mathcal L\lra \SL_2(\Bbb C)$ be the hypothetical Langlands parameter attached to $f$. 
Then ${\rm Sym}^2\phi_f: \mathcal L\lra \SO(3,\Bbb C)$ parametrizes ${\rm Sym}^2\pi_f$.
The distinguished unipotent orbit $(8a+3)$ of $\SO(8a+3,\Bbb C)$ gives rise to a map
$\phi_u: \SL_2(\Bbb C)\lra \SO(8a+3,\Bbb C)$. Then consider
$$\phi={\rm Sym}^2\phi_f\oplus  \phi_u : \mathcal L\times \SL_2(\C)\lra \SO(3,\C)\times \SO(8a+3,\Bbb C)\subset \SO(8a+6,\Bbb C)={}^L G.
$$
We expect that $\phi$ parametrizes $\Pi_f$. The global A-parameter of $\Pi_f$ \cite{Wu} is
$${\rm Sym}^2\pi_f[1]\oplus 1[8a+3].
$$

\end{remark}

\subsection{$n$ odd} 
Let $n=8a+3$. 
Let $F=F_\phi$ be the Ikeda type lift of $f$ on $\SO(3,1+n)$ as in Theorem 1.3, and let $\Pi_f=\otimes' \Pi_p\otimes \Pi_\infty$ be the automorphic  representation associated with $F$.

When $p$ is unramified, $G(\Q_p)\simeq \SO(4a+3,4a+4)$, and $\Pi_p=\Ind_P^G \, |\ |^{s_p}\rtimes 1_{\SO}$, where $p^{s_p}=\alpha_p$ and $1_{\SO}$ is the trivial representation of $\SO(4a+2,4a+3)$, i.e., the quotient of ${\rm Ind}_B^{\SO(4a+2,4a+3)}\, |\ |^{4a+\frac 32}\otimes |\ |^{4a+\frac 12}\otimes\cdots \otimes |\ |^\frac 12.$

Consider $\widetilde G=\SO(4a+4,4a+5)$, and $R=M'N'$, $M'\simeq\GL_1\times \SO(4a+3,4a+4)$. Then Langlands-Shahidi theory applies to $(\widetilde G,M',\Pi_p)$, and we have
\begin{theorem}\label{standardL-odd} The standard $L$-function of $\Pi_F$ is given by
$$L(s,\Pi_f)=L(s,\pi_f)\prod_{i=0}^n \zeta\Big(s+\frac{n}{2}-i\Big)
$$
\end{theorem}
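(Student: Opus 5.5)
The plan is to compute the unramified local standard $L$-factor $L(s,\Pi_p)$ at every prime $p$ from the Satake parameter of $\Pi_p$, and then take the product. Here $n=8a+3$, so $G(\Q_p)\simeq \SO(4a+3,4a+4)$ is split of type $B_{4a+3}$ and ${}^LG=\mathrm{Sp}_{n+3}(\C)$. The standard $L$-function is the one attached to the standard $(n+3)$-dimensional representation of $\mathrm{Sp}_{n+3}(\C)$.

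First I will identify the Satake parameter, using Proposition \ref{thetaprop}(2) and the odd-$n$ computation of the local parameter in Section \ref{Theta}. There, $\Pi_{f,p}\simeq {\rm Ind}^{G(\Q_p)}_{P(\Q_p)}\mu_p\circ\nu$ with $\mu_p(p)=\alpha_p$ (writing $p^{s_p}=\alpha_p$ as in the text). By induction in stages, $\Pi_p$ is the spherical constituent of
$${\rm Ind}^{G}_{B}\big(|\cdot|^{s_p}\otimes|\cdot|^{4a+\frac32}\otimes\cdots\otimes|\cdot|^{\frac12}\big),$$
since the trivial representation of the Levi factor $\SO(4a+2,4a+3)$ is the spherical quotient of its Borel induction with exponents $\frac{n}{2},\frac n2-1,\dots,\frac12$, where $\frac n2=4a+\frac32$. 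The spherical vector, and hence the Satake parameter, is insensitive to taking constituents. The parameter is therefore the semisimple class
$$\diag\big(\alpha_p,\ p^{-n/2},\dots,p^{-1/2},\ p^{1/2},\dots,p^{n/2},\ \alpha_p^{-1}\big)\in \mathrm{Sp}_{n+3}(\C).$$
The signs of the exponents do not matter, by Weyl invariance. This matches the local $L$-parameter $\phi(\pi_p)\oplus\bigoplus_{i=1}^{(n+1)/2}(|\cdot|^{\frac{n+2}{2}-i}\oplus|\cdot|^{-\frac{n+2}{2}+i})$ recorded in Section \ref{Theta}.

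Next I will read off the local factor. The standard representation gives
$$L(s,\Pi_p)=(1-\alpha_pp^{-s})^{-1}(1-\alpha_p^{-1}p^{-s})^{-1}\prod_{j}(1-p^{j-s})^{-1},$$
where $j$ runs over the $n+1$ half-integers $-\frac n2,\dots,\frac n2$. The half-integers $\pm\frac12,\dots,\pm\frac n2$ give exactly the shifts $\frac n2-i$ for $i=0,\dots,n$, so the last product is $\prod_{i=0}^n\zeta_p(s+\frac n2-i)$. The first two factors are $L(s,\pi_p)$.

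To place this in the Langlands–Shahidi framework as the text indicates, I will embed $M'=\GL_1\times\SO(4a+3,4a+4)\subset\widetilde G=\SO(4a+4,4a+5)$. Then the $L$-function of $\chi\otimes\Pi_p$ for the adjoint action on ${\rm Lie}(N')$ (the rank-one, first piece) is the standard $L$-function twisted by $\chi$, and its unramified computation reproduces the formula above. Taking the product over all $p$ gives the theorem.

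The main subtlety is the bookkeeping of the normalization between $\mu_p$, $\alpha_p$ and the weight $k=2l-n+1$: one has to check that the Satake entry of $\Pi_p$ is $\alpha_p^{\pm1}$ and not $\alpha_p^{\pm2}$ or a shifted power. This is exactly the point where the odd case differs from the even case, in which ${\rm Sym}^2$ appears. I would settle it by comparing with the Shimura–Waldspurger normalization used in Section \ref{Theta}: the parameter there is $\phi(\pi_p)$ itself, and its Frobenius image is $\diag(\alpha_p,\alpha_p^{-1})$.
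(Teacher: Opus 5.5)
Your proposal is correct and follows essentially the same route as the paper: you identify $\Pi_p$ as the spherical constituent of ${\rm Ind}_P^G\,|\cdot|^{s_p}\rtimes 1_{\SO}$, read off the Satake parameter $\diag(\alpha_p,p^{\pm 1/2},\dots,p^{\pm n/2},\alpha_p^{-1})$ in ${\rm Sp}_{n+3}(\C)$, and interpret the standard $L$-function via Langlands--Shahidi for $M'=\GL_1\times\SO(4a+3,4a+4)\subset\SO(4a+4,4a+5)$. You also write out explicitly the local-factor computation that the paper leaves implicit; the same Satake parameter appears in the paper's section on the residual spectrum.
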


If $n=8a+3$, ${}^L G={\rm Sp}(8a+6,\Bbb C)$. The distinguished unipotent orbit $(8a+4)$ of ${\rm Sp}(8a+4,\Bbb C)$ gives rise to a map
$\phi_u: \SL_2(\Bbb C)\lra {\rm Sp}(8a+4,\Bbb C)$. Then we expect that
$$\phi=\phi_f\oplus  \phi_u : \mathcal L\times \SL_2(\C)\lra \SL_2(\C)\times {\rm Sp}(8a+4,\Bbb C)\subset {\rm Sp}(8a+6,\Bbb C),
$$
parametrizes $\Pi_f$. The global A-parameter of $\Pi_f$ \cite{Wu} is
$$\pi_f[1]\oplus 1[8a+4].
$$

\section{Theta lift as residual spectrum}\label{residual}

We noted in Section 4.1 that $\Pi_f$ is a non-zero, non-cuspidal, square integrable automorphic representation of $G(\A)$. Since it is not cuspidal, it should be in the residual spectrum. We make it more precise.

\subsection{$n$ even}
Let $n=8a+2$.
Let $\Pi_f^{{\rm ORS}}$ be the Oda-Rallis-Schiffmann lift of $f$ to $G'=\SO(2,n)$ \cite{Od,RS}.
Let $\Pi_f^{{\rm ORS}}=\otimes_p' \Pi_p^{{\rm ORS}}\otimes \Pi_\infty^{{\rm ORS}}$.
Then by \cite{LNP} or \cite{AG} (see also the computation in Section \ref{Theta}), the Satake parameter of $\Pi_p^{{\rm ORS}}$ is given by
$$\diag(p^{4a},...,p,1,\alpha_p^2,\alpha_p^{-2},1,p^{-1},...,p^{-4a}).
$$

Now let $G=\SO(3,n+1)$ and $P=MN$, $M\simeq \GL_1\times \SO(2,n)$, and
consider the global induced representation
$$I(s,\Pi_f^{{\rm ORS}})=\Ind_P^G |\ |^s\otimes \Pi_f^{{\rm ORS}}.
$$
One can form the Eisenstein series $E(s,f_s)$ for a section $f_s\in I(s,\Pi_f)$.
It has a simple pole at $s=\frac n2$, and the residue generates the residual automorphic representation
$$J\left(\frac n2,\Pi_f^{{\rm ORS}}\right)=\otimes_p' J\left(\frac n2,\Pi_p^{{\rm ORS}}\right)\otimes J\left(\frac n2,\Pi_\infty^{{\rm ORS}}\right).
$$
Then $J\left(\frac n2,\Pi_f^{{\rm ORS}}\right)$ is the theta lift $\Pi_f$ to $\SO(3,n+1)$.
The Satake parameter of $J\left(\frac n2,\Pi_p^{{\rm ORS}}\right)$ is
$$\diag(p^{4a+1},p^{4a},...,p,1,\alpha_p^2,\alpha_p^{-2},1,p^{-1},...,p^{-4a},p^{-4a-1}),
$$
so that the $L$-function is
$$L(s,\Sym^2\pi_f)\prod_{i=-\frac{n}{2}}^{\frac{n}{2}}\zeta(s+i)$$
which agrees with the result in Section \ref{Lfunction}.
As we show below that at the infinity place, 
$J\left(\frac n2,\Pi_\infty^{{\rm ORS}}\right)$ 
is isomorphic to  a unique irreducible component of $\Pi^{3,n+1}_{l,0}|_{G(\R)}$ 
containing $\mathcal{H}^{2l}(\C^3)\boxtimes \mathbf{1}$ as a minimal $K$-type. 

We use a short-hand notation that $\Ind_{\GL_1\times \SO(2,n)}^{\SO(3,n+1)}$ means $\Ind_P^G$, where $P=MN, M\simeq \GL_1\times \SO(2,n)$, and $G=\SO(3,n+1)$. By \cite{Kobayashi}, $\Pi_{l,0}^{3,n+1}$ is a unique subrepresentation of $\Ind_{\GL_1\times \SO(2,n)}^{\SO(3,n+1)} |\ |^{l}\otimes {\rm tr}_{\SO(2,n)}$. Now, $J(\frac n2,\Pi_\infty)$ is a unique subrepresentation of $\Ind_{\GL_1\times \SO(2,n)}^{\SO(3,n+1)} |\ |^{-\frac n2}\otimes \Pi_\infty$.
Here we used that the trivial representation ${\rm tr}_{\SO(2,n)}$ is a subrepresentation of $\Ind_{\GL_1\times \GL_1\times \SO(n-2)}^{\SO(2,n)} |\ |^{-\frac n2}\otimes |\ |^{-\frac n2+1}\otimes {\rm tr}_{\SO(n-2)}$.

Hence $\Pi_{l,0}^{3,n+1}$ is a subrepresentation of 
$$\Ind_{\GL_1\times \GL_1\times \GL_1\times \SO(n-2)}^{\SO(3,n+1)} |\ |^{l}\otimes |\ |^{-\frac n2}\otimes |\ |^{-\frac n2+1}\otimes {\rm tr}_{SO(n-2)},
$$
which is isomorphic to 

$$\Ind_{\GL_1\times \GL_1\times \GL_1\times \SO(n-2)}^{\SO(3,n+1)} |\ |^{-\frac n2}\otimes |\ |^{l}\otimes |\ |^{-\frac n2+1}\otimes tr_{\SO(n-2)}.
$$
On the other hand, by \cite{Kobayashi}, $\Pi_\infty$ is a subrepresentation of 
$$\Ind_{GL_1\times \SO(1,n-1)}^{\SO(2,n)} |\ |^{l}\otimes {\rm tr}_{\SO(1,n-1)},
$$
which is a subrepresentation of 
$$\Ind_{\GL_1\times \GL_1\times \SO(n-2)}^{\SO(2,n)} |\ |^{l}\otimes |\ |^{-\frac n2+1}\otimes {\rm tr}_{\SO(n-2)}.
$$
Hence $J(\frac n2,\Pi_\infty)$ is a subrepresentation of 

$$\Ind_{\GL_1\times \GL_1\times \GL_1\times \SO(n-2)}^{\SO(3,n+1)} |\ |^{-\frac n2}\otimes |\ |^{l}\otimes |\ |^{-\frac n2+1}\otimes tr_{\SO(n-2)}.
$$
Therefore, $J(\frac n2,\Pi_\infty)$  is a unique irreducible component of $\Pi^{3,n+1}_{l,0}|_{G(\R)}$ 
containing $\mathcal{H}^{2l}(\C^3)\boxtimes \mathbf{1}$ as a minimal $K$-type.

\subsection{$n$ odd} Let $n$ be odd, i.e., $n=8a+3$. Then the Satake parameter of $\Pi_p$ is

$$\diag(p^{4a+\frac 12},...,p^\frac 12,\alpha_p,\alpha_p^{-1},...,p^{-4a-\frac 12}).
$$
Now 
consider the global induced representation
$$I(s,\Pi_f^{{\rm ORS}})=\Ind_P^G |\ |^s\otimes \Pi_f^{{\rm ORS}}.
$$
One can form the Eisenstein series $E(s,f_s)$ for a section $f_s\in I(s,\Pi_f)$.
It has a pole at $s=\frac n2$, and the residue generates the residual automorphic representation
$$J\left(\frac n2,\Pi_f^{{\rm ORS}}\right)=\otimes_p' J\left(\frac n2,\Pi_p^{{\rm ORS}}\right)\otimes J\left(\frac n2,\Pi_\infty^{{\rm ORS}}\right).
$$
Then $J(\frac n2,\Pi_f)$ is the theta lift $\Pi_f$ of $h$ to $\SO(3,n+1)$, where $h$ is the weight $\frac {k+1}2$ holomorphic form corresponding to $f$ under the Shimura correspondence.
The Satake parameter of $J(\frac n2,\Pi_p)$ is
$$\diag(p^{4a+\frac 32},...,p^\frac 12,\alpha_p,\alpha_p^{-1},p^{-\frac 12},...,p^{-4a-\frac 32}),
$$
so that the $L$-function is
$$L(s,\pi_f)\prod_{i=0}^n \zeta\Big(s+\frac{n}{2}-i\Big), 
$$
which agrees with the result in Section \ref{Lfunction}. We can also show the matching at the infinity place as in $n$ even case.

\section{Appendix}\label{appendix}
In this section, we explicitly compute $C(S,r)=C(r,\sqrt{\beta}^{-1}\lambda)$ in (\ref{C(S)}) as a generalization of \cite[Theorem A.1]{Po-aut}:
Let 
$$I_v(r,\mu,\lambda)=\int_{\R}t^{r} e^{-t^2}\Bigg(\frac{(t+\lambda \sqrt{-1})^2-\mu}{|(t+\lambda \sqrt{-1})^2-\mu|}\Bigg)^v 
K_v(|(t+\lambda \sqrt{-1})^2-\mu|)dt.
$$
We first consider the case $r$ even.

\begin{theorem}\label{Po-const}
Suppose $\mu,\lambda>0,\ r\in \Z_{\ge 0}$ and $v\in \Z$. Then
$$I_v(2r,\mu,\lambda)=
(-1)^v C(2r,\lambda)e^{-\mu},
$$
where
$$C(2r,\lambda)=\pi e^{\lambda^2}\Gamma(\tfrac 12, \lambda^2) e^{\lambda^2} p_r(\lambda^2)
+\pi e^{-\lambda^2}\lambda q_{r-1}(\lambda^2).
$$
Here $\Gamma(\frac 12,x)=\int_x^\infty t^{-1/2} e^{-t^2}\, dt$ is the incomplete Gamma function, and $p_r,q_{r-1}$ are polynomials of degree $r, r-1$ (resp.) with rational coefficients. If $r=0$, set $q_{-1}=0$.
\end{theorem}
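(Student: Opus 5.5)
The plan is to use the integral representation of the $K$-Bessel function given in Section \ref{AFSO},
$$K_v(y)=\tfrac12\int_0^\infty e^{-y(s+s^{-1})/2}s^v\,\tfrac{ds}{s},$$
and to remove the modulus by analytic continuation. For $w$ in the right half plane put
$$\mathcal{K}_v(w):=\tfrac12\int_0^\infty e^{-w(s+s^{-1})/2}s^v\,\tfrac{ds}{s}.$$
This is holomorphic in $w$ and satisfies $(w/|w|)^vK_v(|w|)=\mathcal{K}_v(w)$ after the rotation $s\mapsto s\,\overline{w}/|w|$ of the contour, at least when the rotation stays inside the sector of convergence.

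With $w=(t+\lambda i)^2-\mu$, the real part can be negative, so I will not work with $w$ directly. I will instead apply the identity to $-w$ (or to $\pm i w$), which accounts for the sign $(-1)^v$ in the statement. I then substitute into $I_v(2r,\mu,\lambda)$ and interchange the $t$- and $s$-integrals. The integrand becomes
$$t^{2r}\exp\Bigl(-t^2+\tfrac{(s+s^{-1})}{2}\bigl((t+\lambda i)^2-\mu\bigr)\Bigr),$$
up to the chosen rotation. Completing the square in $t$, the $t$-integral is a shifted Gaussian moment, computable in closed form as a polynomial in $\lambda^2$ and in the parameter $a=(s+s^{-1})/2$ times an exponential. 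The factor $e^{-\mu a}$ separates off. The claimed $e^{-\mu}$ should come from the fact that, after the Gaussian integral, the dependence on $\mu$ reduces to evaluation at the critical point $a=1$, i.e.\ $s=1$. Concretely, the remaining $s$-integral should collapse through a change of variables $u=s-s^{-1}$ (or $s\mapsto s^{-1}$ symmetry), leaving $e^{-\mu}$ times a one-dimensional integral in $u$ independent of $\mu$ and $v$.

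The independence from $v$ up to sign is the key structural point. I would first establish it by proving that
$$J_v(\mu):=e^{\mu}I_v(2r,\mu,\lambda)$$
satisfies $\partial_\mu J_v=0$ and $J_v=-J_{v\pm1}$. For this I will use the Bessel recurrences
$$\mathcal{K}_v'(w)=-\tfrac12\bigl(\mathcal{K}_{v-1}(w)+\mathcal{K}_{v+1}(w)\bigr),\qquad \mathcal{K}_{v-1}-\mathcal{K}_{v+1}=-\tfrac{2v}{w}\mathcal{K}_v,$$
together with integration by parts in $t$. The derivative in $t$ of $w$ is $2(t+\lambda i)$, which couples to $\partial_t e^{-t^2}$. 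Once this holds, it suffices to compute $I_0(2r,\mu,\lambda)$ at one convenient $\mu$, or in the limit $\mu\to0^+$. This reduces the problem to
$$C(2r,\lambda)=\int_{\R}t^{2r}e^{-t^2}\mathcal{K}_0\bigl(-(t+\lambda i)^2\bigr)\,dt.$$

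To evaluate this, I will use $\mathcal{K}_0(z)=\int_0^\infty e^{-z\cosh u}\,du$ and integrate in $t$ first, obtaining terms of the form $(1+\cosh u)^{-r-1/2}e^{\lambda^2 c(u)}$. The substitution $x=\lambda^2(\cosh u-1)$-type then produces the incomplete Gamma function $\Gamma(\tfrac12,\lambda^2)$ times $e^{2\lambda^2}$ and polynomial $p_r$, plus boundary terms giving $e^{-\lambda^2}\lambda q_{r-1}(\lambda^2)$. The $r=0$ base case should match \cite[Theorem A.1]{Po-aut}, and a recursion in $r$, obtained by differentiating in an auxiliary Gaussian scaling parameter, gives the degree counts.

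I expect the main obstacle to be the justification of the contour rotation and the interchange of integrals. When $t$ passes near the zeros of $(t+\lambda i)^2-\mu$, the argument of $w$ sweeps through nearly a full half-turn, so convergence of the $s$-integral is not uniform. To handle this I will first treat large $\lambda$ or complexified $\mu$ where everything converges absolutely, and then extend by analyticity in $\lambda$ and $\mu$.
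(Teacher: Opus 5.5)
Your first route rests on a false identity. The function $F_v(w):=(w/|w|)^vK_v(|w|)$ is not holomorphic in $w$: already $F_0(w)=K_0(|w|)$ is real-valued and non-constant. So it cannot equal $\mathcal{K}_v(w)$, which is just the classical $K_v(w)$. The rotation $s\mapsto s\overline{w}/|w|$ makes $ws$ real, but not $w/s$.

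The correct representation is $F_v(w)=\tfrac12\int_0^\infty e^{-(\overline{w}s+ws^{-1})/2}s^{v-1}\,ds$ for $\mathrm{Re}\,w>0$. It involves both $w$ and $\overline{w}$, which is why the paper, following Pollack, uses the substitution $s=\zeta t^{1/2}+\zeta^{-1}t^{-1/2}$ with $\zeta=e^{\pi\sqrt{-1}/4}$ and a contour shift. The $(-1)^v$ then comes from \eqref{recurrence} together with $I_{-v}=I_v$.

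This error propagates through the rest of the first route:
\begin{itemize}
\item Your displayed $t$-integrand is not the one belonging to $I_v$, and it is not even integrable, since the coefficient of $t^2$ in the exponent is $\tfrac12(s+s^{-1})-1\ge 0$.
\item The $\mu\to0^+$ limit is $\int_{\R}t^{2r}e^{-t^2}K_0(t^2+\lambda^2)\,dt$, not an integral of $\mathcal{K}_0(-(t+\lambda\sqrt{-1})^2)$.
\item The claim that the $\mu$-dependence reduces to the critical point $s=1$ is not an argument.
\item $F_v(-w)=(-1)^vF_v(w)$ holds trivially, so passing to $-w$ does not produce the sign $(-1)^v$; the real content is the relation $I_{v+1}=-I_v$.
\end{itemize}

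Your second route is the right kind of idea once you replace the holomorphic recurrences by the correct ones for $F_v$, namely $\partial_wF_v=-\tfrac12F_{v-1}$ and $\partial_{\overline{w}}F_v=-\tfrac12F_{v+1}$. Evaluate these at $w=(t+\lambda\sqrt{-1})^2-\mu$, so that $\partial_tF_v=-(t+\lambda\sqrt{-1})F_{v-1}-(t-\lambda\sqrt{-1})F_{v+1}$. Integrate this against $e^{-t^2}$ and combine with $\partial_\lambda$. For $r=0$ this gives $\partial_\lambda\bigl(e^{-\lambda^2}(I_v+I_{v+1})\bigr)=0$. Since $|I_v|\ll e^{\mu-\lambda^2}$ as $\lambda\to\infty$, this forces $I_{v+1}=-I_v$, an elementary alternative to the contour shift.

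For $r\ge1$, however, the factor $t^{2r}$ leaves an inhomogeneous term:
\[
(\partial_\lambda-2\lambda)\bigl(I_v(2r,\mu,\lambda)+I_{v+1}(2r,\mu,\lambda)\bigr)=-2r\sqrt{-1}\,\bigl(I_v(2r-1,\mu,\lambda)-I_{v+1}(2r-1,\mu,\lambda)\bigr).
\]
At $v=0$ the right-hand side is $2r\sqrt{-1}\,I_1(2r-1,\mu,\lambda)$. By the formula in Remark \ref{rodd} this equals $-8r\lambda\int_0^\infty t^{2r}e^{-t^2}K_1(|z|)|z|^{-1}\,dt<0$. So $J_0=-J_1$ fails.

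In fact the statement itself fails for $r\ge1$. The integrand of $I_0(2r,\mu,\lambda)$ is non-negative. Put $A=\lambda^2+\mu$; for $\sqrt{A}/4\le t\le\sqrt{A}/2$ one has $t^2+|(t+\lambda\sqrt{-1})^2-\mu|\le A+\tfrac23\lambda^2$. Using $K_0(x)\gg x^{-1/2}e^{-x}$ for $x\ge1$, it follows that $e^{\mu}I_0(2r,\mu,\lambda)\gg\mu^r$ as $\mu\to\infty$ with $\lambda$ fixed.

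Hence no completion of your sketch can prove the theorem beyond $r=0$. The same objection applies to the paper's claim that $\partial_\mu\bigl(e^{\mu}I_0(2r,\mu,\lambda)\bigr)=0$ for every $r$; only the case $r=0$ holds as stated.
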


Let $I_v=I_v(r,\mu,\lambda)$. 
 Then by the change of variables, we see that $I_{-v}=I_v$ and $\overline{I_v}=I_v$. So $I_v$ is real.
Also \cite[Proposition A.2]{Po-aut} says, for any $r$,
\begin{equation}\label{recurrence}
\partial_\mu I_v(r,\mu,\lambda)=\frac 12 (I_{v+1}(r,\mu,\lambda)+I_{v-1}(r,\mu,\lambda)).
\end{equation}

Now we explain how to modify the proof of \cite[Proposition A.2]{Po-aut} to obtain $I_0(2r,\mu,\lambda)= C(2r,\lambda)e^{-\mu}.$
Then together with the recurrence relation above, we obtain the theorem.

Let $s=\zeta t^\frac 12+\zeta^{-1} t^{-\frac 12}$, where $\zeta=e^{\sqrt{-1}\frac {\pi}4}$. Then
$s^2=2+\sqrt{-1}(t-t^{-1})$ and $s\bar s=t+t^{-1}$. Let $\alpha=\mu+\lambda^2$. 
As in the proof of \cite[Proposition A.2]{Po-aut}, we have
$$I_0(2r,\mu,\lambda)=e^{-\mu-\lambda^2}\int_0^\infty\int_0^\infty t^{2r} e^{-\frac 12 s^2x^2-\lambda s\bar s x+\frac 12\alpha s^2} \, \frac {dt}t dx.
$$
Now
$$\int_0^\infty t^{2r} e^{-\frac 1{4\beta} x^2-\gamma x}\, dx=4^r\beta^r\left(\sqrt{\beta}\Gamma(r+\tfrac 12){}_1F_1(r+\tfrac 12,\tfrac 12;\beta\gamma^2)-2\beta\gamma \Gamma(r+1){}_1F_1(r+1,\tfrac 32;\beta\gamma^2 \right).
$$
Apply the above formula with $\beta=(s\sqrt{2})^{-2}, \gamma=\lambda s\bar s$. Then 
\begin{eqnarray*}
&& e^\mu I_0(2r,\mu,\lambda)= \\
&& e^{-\lambda^2}\int_0^\infty e^{\frac 12\alpha s^2}\frac {2^\frac n2}{s^{2r}}
\left(\tfrac 1{s\sqrt{2}}\Gamma(r+\tfrac 12){}_1F_1(r+\tfrac 12,\tfrac 12;\tfrac 12\lambda^2{\bar s}^2)-\lambda\tfrac {\bar s}s \Gamma(1+r){}_1F_1(r+1,\tfrac 32; \tfrac 12 \lambda^2{\bar s}^2)\right)\, \frac {dt}t.
\end{eqnarray*}

Let $u=-\sqrt{-1}\frac {\pi}4+\frac 12\log t$ so that $du=\frac {dt}{2t}$, and $\cosh(u)=\frac {\bar s}2$, and $\sinh(u)=-\frac {is}2$. Then
\begin{eqnarray*}
&& e^\mu I_0(2r,\mu,\lambda)=e^{-\lambda^2}\int_{Im(u)=-\sqrt{-1}\frac {\pi}2} e^{-2\alpha (\sinh(u))^2}\frac 1{(-2)^r(\sinh(u))^{2r}} \\
&&\cdot \left(\tfrac 1{2i\sqrt{2}\sinh(u)}\Gamma(r+\tfrac 12){}_1F_1(r+\tfrac 12,2\lambda^2(\cosh(u))^2)-\lambda\tfrac {\cosh(u)}{i\sinh(u)} \Gamma(1+r){}_1F_1(r+1,\tfrac 32; 2\lambda^2 (\cosh(u))^2)\right)\, \frac {dt}t.
\end{eqnarray*}

We differentiate under the integral with respect to $\mu$ and move the contour to $Im(u)=0$. Then the integral becomes an integral of an odd function, and it is zero. Therefore, $I_0(2r,\mu,\lambda)=C(2r,\lambda)e^{-\mu}$ for some $C(2r,\lambda)$. 

In order to compute $C(2r,\lambda)$, consider $\lim_{\mu\to 0} I_0(2r,\mu,\lambda)$. Then
$$C(2r,\lambda)=\int_{-\infty}^\infty t^{2r} e^{-t^2} K_0(t^2+\lambda^2)\, dt.
$$

We use the substitution $t^2+\lambda^2=u$. Then 
$$C(2r,\lambda)=e^{\lambda^2} \int_{\lambda^2}^\infty (u-\lambda^2)^{r-\frac 12} e^{-u}K_0(u)\, du.
$$
We note that $C(0,\lambda)=\frac {\pi}{\sqrt{2}}\Gamma(\frac 12, 2\lambda^2)$.

Let 
$$D(r,\lambda)=e^{\lambda^2} \int_{\lambda^2}^\infty (u-\lambda^2)^{r-\frac 12} e^{-u}K_1(u)\, du.
$$
Then by using the fact that $-\frac {dK_0(t)}{dt}=K_1(t)$, we can show
$$C(2r,\lambda)+D(r,\lambda)=(r-\frac 12)C(2r-2,\lambda),
$$
$$\frac {d}{d\lambda} C(2r,\lambda)=-2\lambda D(r,\lambda).
$$
So we have the differential equation, 
$$
\frac d{d\lambda}C(2r,\lambda)-2\lambda C(2r,\lambda)=2\lambda (r-\frac 12)C(2r-2,\lambda),
$$
Using this recurrence relation, we can show the result.

\begin{remark}\label{rodd}
Suppose $r$ is odd. Then it is clear that $I_0(r,\mu,\lambda)=0$. By the change of variables, we can see that $I_{-v}=-I_v$ and 
$\overline{I_v}=I_{-v}$. Hence $I_v$ is purely imaginary. Let $z=(t+\lambda\sqrt{-1})^2-\mu$. Then
$$I_1(r,\mu,\lambda)=4\lambda\sqrt{-1}\int_0^\infty t^{r+1}e^{-t^2} \frac {K_1(|z|)}{|z|}\, dt.
$$
For $v>1$, $I_v(r,\mu,\lambda)$ can be obtained from $I_1(r,\mu,\lambda)$ by (\ref{recurrence}), 
but we may not have a similar result as in the case when $r$ is even.
\end{remark}

\end{document}